\documentclass[a4paper,11pt,reqno]{amsart}

\usepackage[tmargin=1.1in,bmargin=1.1in,rmargin=1.2in,lmargin=1.2in]{geometry}
\usepackage{enumitem}
\usepackage{amsmath}
\usepackage{amssymb}
\usepackage{amsthm}
\usepackage{amsfonts}
\usepackage{mathrsfs}
\usepackage{mathtools}
\usepackage{hyperref}
\usepackage[capitalize,nameinlink,noabbrev,nosort]{cleveref}

\usepackage{graphicx}
\newcommand*{\Scale}[2][4]{\scalebox{#1}{$#2$}}

\allowdisplaybreaks

\newcommand{\N}{\mathbb{N}}
\newcommand{\PP}{\mathbb{P}}
\newcommand{\R}{\mathbb{R}}

\newcommand{\tf}{\tilde{f}}
\newcommand{\tg}{\tilde{g}}
\newcommand{\tilh}{\tilde{h}}
\newcommand{\teta}{\tilde{\eta}}

\newcommand\ol[1]{\ensuremath{\overline{#1}}}
\newcommand\ul[1]{\ensuremath{\underline{#1}}}
\newcommand{\vp}{\varphi}

\theoremstyle{plain}
\newtheorem{thm}{Theorem}[section]
\newtheorem{lem}[thm]{Lemma}
\newtheorem{prop}[thm]{Proposition}
\newtheorem{cor}[thm]{Corollary}

\Crefname{thm}{Theorem}{Theorems}
\Crefname{lem}{Lemma}{Lemmas}

\theoremstyle{definition}
\newtheorem{defn}[thm]{Definition}
\newtheorem{rem}[thm]{Remark}
\newtheorem{eg}[thm]{Example}

\numberwithin{equation}{section}
\numberwithin{figure}{section}

\begin{document}
	
\author{Arvind Ayyer}
\address{Arvind Ayyer, Department of Mathematics,
  Indian Institute of Science, Bangalore  560012, India.}
\email{arvind@iisc.ac.in}

\author{Saham Sil}
\address{Saham Sil, Department of Mathematics,
	Indian Institute of Science, Bangalore  560012, India.}
\email{sahamsil@iisc.ac.in}

\date{\today}

\title{Factorised stationary states for a long range misanthrope process}

\begin{abstract}
The misanthrope process is an interacting particle system where particles move between neighbouring sites with hop rates depending only on the number of particles at the departure and arrival sites.
Motivated by a discretised version of the Hammersley--Aldous--Diaconis process,
we introduce a \textit{partially asymmetric long range misanthrope process} (PALRMP) 
on a finite one-dimensional lattice with periodic boundary conditions where particles can move between sites that are not necessarily neighbours, as long as there are no particles in between the departure and arrival sites.
In this model, each site $\ell$ has an inhomogeneous rate parameter $x_\ell$ associated to it, and the hop rate of a particle moving from site $k$ to site $\ell$ depends upon the parameter associated to the target site $x_\ell$, the direction the particle moves, and the number of particles at sites $k$ and $\ell$.
We also consider the \textit{homogeneous} PALRMP, where all the $x_\ell$'s are 1.
We find necessary and sufficient conditions on the hop rates under which the stationary distribution is of factorised form 
for both the PALRMP and the homogeneous PALRMP, as well as the extreme variants, namely the ones where the particle motion is totally asymmetric (TALRMP) and symmetric (SLRMP).
As an illustrative example, we study in detail the discrete Hammersley--Aldous--Diaconis process.
\end{abstract}

\subjclass[2020]{60J10,60K35}
\keywords{misanthrope process, long range process, product stationary measure}

\maketitle

\section{Introduction}

Interacting particle systems \cite{liggett-ips-2005} are continuous time Markov processes on lattices or directed graphs, where particles hop between sites, following hop rates which depend only on local configurations.
They have been greatly studied, especially in the physics literature, as such models exhibit nontrivial phenomena, even on one dimensional lattices, and can be used to model systems with more complicated dynamics.

One such interacting particle system is the \emph{zero range process} (ZRP), introduced in \cite{spitzer-1970}, where particles hop between neighbouring sites with the hop rate being a function only of the number of particles at the departure site.
These systems always exhibit factorised stationary distributions, even on arbitrary directed graphs \cite{evans-hanney-2005}.
ZRPs have been shown to be particularly useful in the study of condensation~\cite{evans-waclaw-2014}.
The \emph{misanthrope process}, introduced in \cite{cocozza-1985}, is a generalisation of the ZRP, where the hop rates depend upon the number of particles at the departure and arrival sites.
If the hop rates satisfy certain equations, then the stationary probabilities  factorise or equivalently the stationary distribution is said to be of product form;
see \cref{sec:back} for the details.

Motivated by the Hammersley--Aldous--Diaconis (HAD) process~\cite{hammersley-1972,aldous-diaconis-1995} (see also the work by Ferrari--Martin~\cite{ferrari-martin-2009}) and its discrete variant we introduce in \cref{sec:HAD}, we define a natural generalisation of the misanthrope process.
We work with a finite lattice with periodic boundary conditions, and we allow particles to move between sites arbitrarily far apart so long as there are no particles in the sites between them.
In the most general situation, if a particle moves from site $k$ containing $m$ particles to site $\ell$ containing $n$ particles, the rate will be $x_\ell \, u(m, n)$ if the motion is clockwise and $q\, x_\ell \, u(m, n)$ if it is counterclockwise.
We call such a system a \emph{partially asymmetric long range misanthrope process} (PALRMP).
The main thrust of this work is to understand when such systems will exhibit factorised stationary distributions.

The plan of the article is as follows.
After discussing the background in \cref{sec:back}, we will define the PALRMP formally in \cref{sec:def palrmp} and state the main results of the paper there.
We will state some properties of its master equation in \cref{sec:mastereq}, which will be key to the proofs.
We will study the totally asymmetric variant ($q = 0$), which we call the TALRMP in \cref{sec:talrmp}, followed by the PALRMP in \cref{sec:palrmp} and the symmetric variant, which we call the SLRMP in \cref{sec:slrmp}.
We will study their homogeneous variants in the same order in \cref{sec:htalrmp,sec:hpalrmp,sec:hslrmp} respectively.
The models being classified are shown in \cref{fig:models}.
Finally, we will define the discretised version of the HAD process, which is an example of a TALRMP and study some of its properties in \cref{sec:HAD}.

\begin{figure}[h!]
\centering
\includegraphics[scale=1]{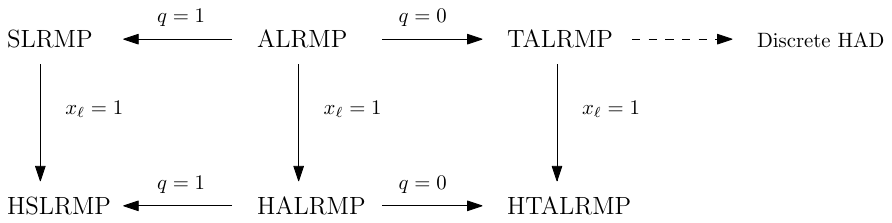}
\caption{All the models in this work. We use the prefix H to stand for homogeneous.}
\label{fig:models}
\end{figure}

\section{Background}
\label{sec:back}

We set the stage here for the kinds of processes we will be interested in.
We also set up the notation for the rest of the article.
Experts should feel free to skip this section.

In the following, $\N_0$ denotes the set of non-negative integers and for $n \in \N$ we denote $[n]$ to be set set $\{1,2,\dots,n\}$.
The rate functions $u$ considered are such that $u(m,n)$ is positive when $m\in \N$ and $n \in \N_0$ and $0$ otherwise, where $m$ and $n$ are the number of particles at the departure and arrival sites, respectively.

Throughout, we will work with interacting particle systems on 
a finitely one-dimensional lattice with periodic boundary conditions 
containing finitely many particles. 
There will only be one kind of particle, i.e. single species.
We will fix $L$ to be the number of sites and the number of particles will be conserved in the process, denoted by $N$.
We denote the state space as $\Omega_{L,N}$.
A configuration $\eta \in \Omega_{L,N}$ is specified by the number of particles at each site, $\eta = (\eta_1,\eta_2,\dots,\eta_L)$, where $\eta_\ell$ is the number of particles at site $\ell$ in the configuration.
The number of possible configurations is the number of solutions of $\eta_1+\eta_2+\cdots+\eta_L = N$, and thus
\begin{equation*}
    \left|\Omega_{L,N}\right| = \binom{L+N-1}{N} = \binom{L+N-1}{L-1}.
\end{equation*}
The positions of configurations will be indexed modulo $L$.

Consider a continuous time Markov process on a finite state space and let $\sigma$ be a probability distribution on the state space, which need not be stationary.
The \emph{probability current} due to a transition from $\eta \in \Omega_{L,N}$ to $\eta' \in \Omega_{L,N}$ is $\sigma(\eta) \times \text{rate}(\eta \to \eta')$.
The sum of the probability currents due to all transitions into $\eta$ is known as the \emph{incoming probability current}, and the sum of the probability currents due to all transitions out of $\eta$ is known as the \emph{outgoing probability current}.
For the stationary distribution of the process, which we will always denote by $\pi$, we refer to them as the \emph{stationary} incoming probability current and the \emph{stationary} outgoing probability current, respectively.

In a continuous time Markov process, a probability distribution $\pi$ is \textit{stationary} if it satisfies the \emph{master equation} or \emph{balance equation}, that is, for all $\eta \in \Omega_{L,N}$, the incoming probability current is equal to the outgoing probability current,
\begin{equation}
    \label{mastereq}
    \sum_{\eta' \in \Omega_{L,N}} \pi(\eta') \text{rate}(\eta' \to \eta) =
    \sum_{\eta' \in \Omega_{L,N}} \pi(\eta) \text{rate}(\eta \to \eta').
\end{equation}
If it turns out that the equality in \eqref{mastereq} holds for each summand, we say that \emph{detailed balance} holds.
If, on the other hand, for every $\eta'$, we can find an $\eta''$ such that
\begin{equation}
    \label{pairwise-balance}
    \pi(\eta') \text{rate}(\eta' \to \eta) = \pi(\eta) \text{rate}(\eta \to \eta''),
\end{equation}
and this map is invertible, we say that \emph{pairwise balance}~\cite{schutz-ramaswamy-barma-1996} holds.
If either pairwise balance or detailed balance holds, then the master equation \eqref{mastereq} is satisfied, as the corresponding transitions have the same probability currents.

We will work with irreducible Markov processes, so the stationary distribution exists and is unique.
We will denote the stationary distribution by $\pi \equiv \pi_{L,N}$.

\begin{defn}[Factorised probability distribution]
	\label{defn:factorised-steady-state}
	In an interacting particle system on $\Omega_{L,N}$, a probability distribution is said to be \emph{factorised} or \emph{of product form} if there exists a family of functions $g = (g_\ell)_{\ell=1}^L$,
    where $g_\ell : \N_0 \to \R_{> 0}$, such that the probability of a configuration $\eta = (\eta_1,\eta_2,\dots,\eta_L) \in \Omega_{L,N}$
    is given by
	\begin{subequations}
        \label{eqn:factorised-prob-dist}
        \begin{equation}
    		\sigma_{L,N}(\eta) = \dfrac{1}{Z_{L,N}^g} \prod_{\ell = 1}^L g_\ell(\eta_\ell),
    	\end{equation}
	where $Z_{L,N}^g$ is the normalization constant and is given by
        \begin{equation}
		      Z_{L,N}^g = \sum_{(\eta_1,\dots,\eta_L) \in \Omega_{L,N}} 
		      \left(\prod_{\ell=1}^L g_\ell(\eta_\ell)\right).
        \end{equation}
	\end{subequations}
    When there exists a family of functions $f = (f_\ell)_{\ell=1}^L$ such that the stationary distribution on $\Omega_{L,N}$ is given by \eqref{eqn:factorised-prob-dist} with respect to $f$, we say that the stationary distribution is factorised.
	The $f_\ell$'s are said to be \emph{one-point functions}.
\end{defn}

In case there is no ambiguity, we shall write $Z_{L,N}$ in place of $Z_{L,N}^g$.
We note that factorised stationary distributions do not give rise to product measures in our setting since the number of particles is conserved. 

The following result follows easily from \cref{defn:factorised-steady-state}.

\begin{prop}
    \label{prop:weight-func-rescale}
    If an interacting particle system on $\Omega_{L,N}$ has factorised stationary distribution determined by one-point functions $(f_\ell)_{\ell=1}^L$, then it is also factorised with respect to the one-point functions $(g_\ell)_{\ell=1}^L$, where $g_\ell(m) = a b^m f_\ell(m)$ for some $a,b \in \R_{> 0}$.
\end{prop}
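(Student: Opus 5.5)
The plan is to compute the unnormalised weight that the rescaled functions give to a configuration, and to show it differs from the original weight by a factor that is the same for every configuration in $\Omega_{L,N}$. That factor then cancels against the normalisation constant.

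Fix $\eta = (\eta_1,\dots,\eta_L) \in \Omega_{L,N}$ and take $g_\ell(m) = a b^m f_\ell(m)$. Since $a,b>0$ and $f_\ell$ takes values in $\R_{>0}$, each $g_\ell$ also maps $\N_0$ into $\R_{>0}$, so $g$ is an admissible family in the sense of \cref{defn:factorised-steady-state}. Expanding the product gives
\begin{equation*}
\prod_{\ell=1}^L g_\ell(\eta_\ell) = a^L b^{\eta_1+\cdots+\eta_L} \prod_{\ell=1}^L f_\ell(\eta_\ell) = a^L b^N \prod_{\ell=1}^L f_\ell(\eta_\ell).
\end{equation*}
The key point is that the number of particles is conserved, so $\sum_\ell \eta_\ell = N$ holds for every $\eta \in \Omega_{L,N}$. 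This is the only place the structure of the state space enters, and it is why the factor $b^m$ is permitted at all.

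Summing the identity above over $\Omega_{L,N}$ gives $Z^g_{L,N} = a^L b^N Z^f_{L,N}$. Therefore
\begin{equation*}
\frac{1}{Z^g_{L,N}} \prod_{\ell} g_\ell(\eta_\ell) = \frac{1}{Z^f_{L,N}} \prod_{\ell} f_\ell(\eta_\ell) = \pi(\eta)
\end{equation*}
for all $\eta$. This says exactly that the stationary distribution is factorised with respect to $(g_\ell)_{\ell=1}^L$.

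There is no real obstacle here. The one step that needs care is the observation that $b^{\sum_\ell \eta_\ell}$ is a global constant, which uses the conservation of particle number. Without conservation, the factor $b^m$ would change the distribution.
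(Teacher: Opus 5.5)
Your argument is correct, and it is the direct verification the paper has in mind when it says the result follows easily from \cref{defn:factorised-steady-state} (the paper gives no separate proof). The factor $a^L b^N$ is the same for every configuration in $\Omega_{L,N}$ because particle number is conserved, so it cancels against $Z^g_{L,N} = a^L b^N Z^f_{L,N}$.
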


In this article, we will only consider one-dimensional lattices on the ring.
So, for a rate $u$, we say that the stationary distribution corresponding to $u$ is factorised if there exists $f = (f_\ell)_{\ell=1}^\infty$, such that for all $L,N \in \N$, the stationary distribution on $\Omega_{L,N}$ is given by \eqref{eqn:factorised-prob-dist}.

\section{A partially asymmetric long range misanthrope process}
\label{sec:def palrmp}

Fix $q \geq 0$ and a rate function $u$.
We define the \emph{partial asymmetric long range misanthrope process} (PALRMP) on a one-dimensional ring lattice with $L$ sites, and associate a positive \emph{(inhomogeneous) rate parameter} to each site, denoted $x_\ell$ for site $\ell$.
Sites contain an arbitrary number of indistinguishable particles.
We identify moving right (resp. left) with going clockwise (resp. anti clockwise) in the ring lattice.
For a configuration of particles $\eta = (\eta_1,\eta_2,\dots,\eta_L)$ on $L$ sites, whenever a site $k$ is not empty, a particle moves from $k$ to a site to its right 
(resp. left), say site $\ell$, with rate $x_\ell \, u(\eta_k,\eta_\ell)$ (resp. $q \, x_\ell \, u(\eta_k,\eta_\ell)$), provided that the sites between site $k$ and site $\ell$, as we move right (resp. left), are empty.
See \cref{fig:possible-transitions} for an illustration of possible transitions.
When $x_\ell = 1$ for all sites, we shall refer to it as the \textit{homogeneous} PALRMP.
When $q = 0$, we refer to the process as the \emph{totally asymmetric long range misanthrope process} (TALRMP).
When $q = 1$, we refer to the process as the \emph{symmetric long range misanthrope process} (SLRMP).
We can similarly define the \emph{homogeneous} TALRMP and the \emph{homogeneous} SLRMP.

\begin{figure}[h]
	\centering
	\includegraphics[scale=0.3]{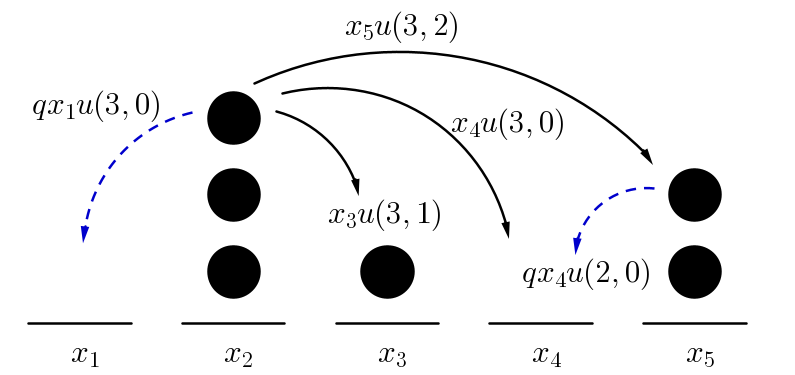}
	\caption{Some transitions, along with their rates, in the PALRMP from the configuration $(0,3,1,0,2) \in \Omega_{5,6}$.}
	\label{fig:possible-transitions}
\end{figure}

Consider a configuration $\eta \in \Omega_{L,N}$.
For $k,\ell \in [L]$, we denote $\eta^{k,\ell} \in \Omega_{L,N}$ and $\eta^\ell \in \Omega_{L,N+1}$ to be the configurations such that the number of particles at any site $a \in [L]$ is
\begin{subequations}
    \label{eqn:etakll}
    \begin{align}
        \label{eqn:eta-k,l}
        \eta^{k,\ell}_a &= \eta_a - \delta_{k,a} + \delta_{\ell,a},\\
        \label{eqn:eta-l}
        \eta^{\ell}_a &= \eta_a + \delta_{\ell,a}.
    \end{align}
\end{subequations}
If there is a transition due to a particle moving right (resp. left), from site $k$ to site $\ell$, then
\begin{equation}
    \text{rate}(\eta \to \eta^{k,\ell}) = x_\ell \, u(\eta_k,\eta_\ell) \ \ (\text{resp. } q \, x_\ell \, u(\eta_k,\eta_\ell)) .
\end{equation}
When there is a transition into $\eta$ where a particle moves right (resp. left), from site $k$ to site $\ell$, we denote
$\text{rate}(\eta^{\ell,k} \to \eta) = x_\ell \, u(\eta_k+1,\eta_\ell-1)$ (resp. $q \, x_\ell \, u(\eta_k+1,\eta_\ell-1)$).

\begin{prop}
	\label{prop:irred-LRMP}
	When all $u(m,n)$'s and $x_\ell$'s are positive, and with $q \geq 0$, the PALRMP is irreducible.
\end{prop}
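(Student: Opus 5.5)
Since $q \geq 0$ may be zero, we use only clockwise (rightward) moves; it suffices to show that the TALRMP moves alone connect every pair of configurations in $\Omega_{L,N}$. Every rightward move from a nonempty site $k$ to the next site $k+1$ is always allowed, because no sites lie strictly between $k$ and $k+1$. Its rate is $x_{k+1}\,u(\eta_k,\eta_{k+1})>0$, since $\eta_k\geq 1$, $\eta_{k+1}\geq 0$, and all $x_\ell$ and $u(m,n)$ with $m\in\N$, $n\in\N_0$ are positive. So the nearest-neighbour moves $\eta\to\eta^{k,k+1}$ (indices mod $L$) all have positive rate whenever $\eta_k\geq 1$. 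Long-range moves only add edges to the transition graph and cannot destroy irreducibility.

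We then fix a reference configuration, say $\eta^* = (N,0,\dots,0)$, and show that every $\eta\in\Omega_{L,N}$ reaches $\eta^*$ and that $\eta^*$ reaches every $\eta$. For the first claim, repeatedly take any particle not at site $1$, at site $k\neq 1$, and move it clockwise $k\to k+1\to\cdots\to L\to 1$. Each step is a nearest-neighbour move from a nonempty site, hence has positive rate. After finitely many steps all $N$ particles sit at site $1$.

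For the second claim, starting from $\eta^*$ and a target $\eta$, process the sites $\ell=L,L-1,\dots,2$ in that order. For each such $\ell$, move $\eta_\ell$ particles one at a time from site $1$ clockwise $1\to2\to\cdots\to\ell$. Site $1$ still holds particles to send because we only remove $\sum_{\ell\geq 2}\eta_\ell\leq N$ of them. Each intermediate step departs from a site currently holding the moving particle, so it is nonempty and the rate is positive. Filling farther sites first is not actually needed, since any nonempty departure site suffices, but it keeps the bookkeeping clean. What remains at site $1$ is exactly $\eta_1$.

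Concatenating the two paths connects any $\eta$ to any $\eta'$ through $\eta^*$ by a path of positive-rate transitions, so the finite-state chain is irreducible. There is no real obstacle. The only points needing care are that nearest-neighbour moves are always admissible under the long-range rule (the empty-in-between condition is vacuous), and that the argument must not use leftward moves, since $q$ may vanish.
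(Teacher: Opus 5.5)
Your proof is correct and follows the same plan as the paper: restrict to rightward (TALRMP) moves, which are available for every $q\geq 0$, and connect every configuration to and from $(N,0,\dots,0)$. The only difference is that you move particles by nearest-neighbour steps, whereas the paper moves each particle in one long-range jump over empty sites. That is why the paper's processing order $L, L-1,\dots$ is essential there, while in your version it is merely convenient.
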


\begin{proof}
	We consider only the transitions in the TALRMP, that is, 
	$q = 0$ and particles move only to the right.
	As these transitions are valid in any PALRMP, we will have proved the proposition.
	We are required to show that given any two configurations in $\Omega_{L,N}$, there is a sequence of transitions from one to the other.
	In order to do so, we consider the configuration $(N,0,\dots,0) \in \Omega_{L,N}$, where all particles are at the first site.
	
	For $\eta = (\eta_1,\eta_2,\dots,\eta_L)$, there are $\eta_L$ transitions where a particle at site $L$ moves right to site $1$.
    This gives the configuration $(\eta_1+\eta_L,\eta_2,\dots,\eta_{L-1},0)$.
	Then, as site $L$ does not contain particles in the resulting configuration, there are $\eta_{L-1}$ transitions where a particle moves from site $L-1$ to site $1$.
	Continuing in this manner, we get a sequence of transitions such that in the resulting configuration, all the particles are at site $1$.
	
	Now, from the configuration $(N,0,\dots,0)$, there are $\eta_L$ transitions where a particle moves right from site $1$ to site $L$, as all the sites in between have no particles.
    This gives the configuration $(N-\eta_L,0,\dots,0,\eta_L)$.
	Then, there are $\eta_{L-1}$ transitions where a particle moves from site $1$ to site $L-1$.
	Continuing in this manner, we get a sequence of transitions such that the resulting configuration is $\eta$.
	Hence, the TALRMP is irreducible, and so is the PALRMP.
\end{proof}

\begin{figure}[h]
	\centering
	\includegraphics[scale=0.3]{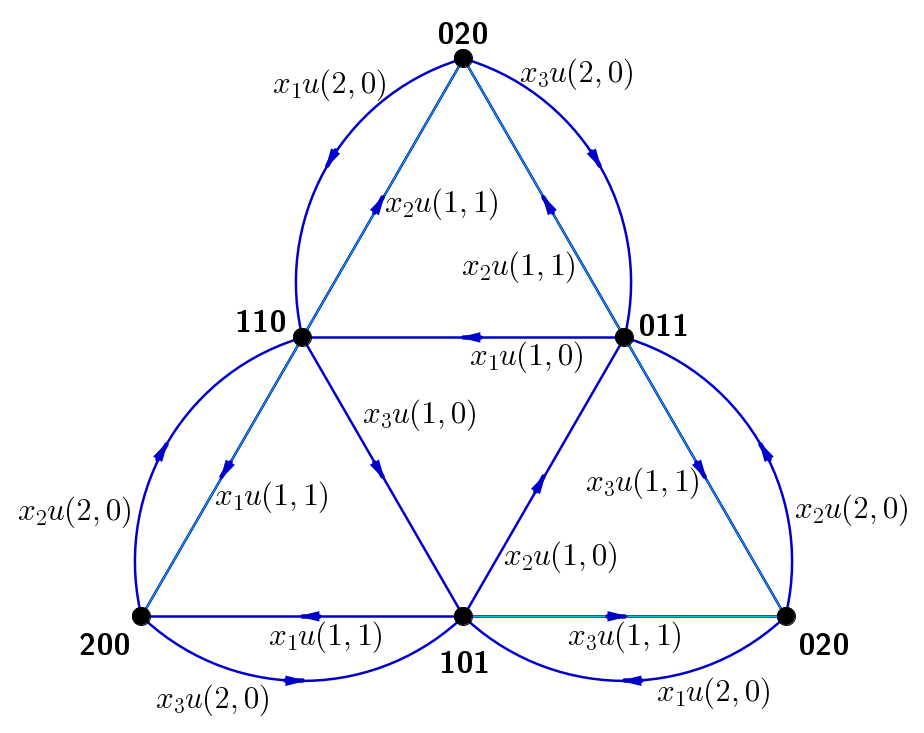}
	\caption{All transitions in the TALRMP on $\Omega_{3,2}$.}
    \label{fig:all-transitions-3,2}
\end{figure}

By \cref{prop:irred-LRMP}, the stationary distribution is unique, and hence we need only show that it satisfies the master equation \eqref{mastereq}.

We first state a symmetry property of the PALRMP.
Define the right shift map $S_R:\Omega_{L,N} \to \Omega_{L,N}$ by setting
\begin{equation*}
    S_R(\eta_1,\eta_2,\dots,\eta_L) = (\eta_L,\eta_1,\dots,\eta_{L-1}).
\end{equation*}

\begin{prop}[Translation covariance]
	\label{prop:trans-inv}
	Let $(X_t)$ be the Markov process for the usual PALRMP on $\Omega_{L,N}$, and $(Y_t)$, for the PALRMP on $\Omega_{L,N}$ where the $\ell$'th site has rate parameter $x_{\ell-1}$ modulo $L$.
	Suppose the probability distribution $\pi$ is stationary for $(X_t)$.
	Then, the probability distribution $\tilde{\pi} = \pi\circ S_R^{-1}$ is stationary for $(Y_t)$.
\end{prop}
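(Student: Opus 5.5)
The plan is to show that $S_R$ is a graph isomorphism between the transition structures of $(X_t)$ and $(Y_t)$ that preserves all rates. Stationarity then transfers directly through the master equation \eqref{mastereq}. Concretely, I will prove that for all $\eta,\eta' \in \Omega_{L,N}$,
\[
\text{rate}_Y\bigl(S_R\eta \to S_R\eta'\bigr) = \text{rate}_X(\eta \to \eta').
\]
Here $\text{rate}_X$ and $\text{rate}_Y$ denote the transition rates of the two processes.

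To verify this identity, note that $(S_R\eta)_a = \eta_{a-1}$, with indices taken modulo $L$. A transition $\eta \to \eta^{k,\ell}$ in $(X_t)$ moves a particle from $k$ to $\ell$, and its image is $S_R(\eta^{k,\ell}) = (S_R\eta)^{k+1,\ell+1}$. The intermediate sites between $k$ and $\ell$, traversed in a given direction, are empty in $\eta$ exactly when the sites between $k+1$ and $\ell+1$, traversed in the same direction, are empty in $S_R\eta$. Moreover, the direction (clockwise or anticlockwise) is preserved, because the shift is a rotation of the ring.

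Next I compare the rates. In $(X_t)$ the rate is $x_\ell\, u(\eta_k,\eta_\ell)$, multiplied by $q$ if the move is leftward. In $(Y_t)$ the target site $\ell+1$ carries rate parameter $x_\ell$. The occupations there are $(S_R\eta)_{k+1} = \eta_k$ and $(S_R\eta)_{\ell+1} = \eta_\ell$, so the rate is again $x_\ell\, u(\eta_k,\eta_\ell)$, with the same factor of $q$ if leftward. Since $S_R$ is a bijection of $\Omega_{L,N}$, every transition of $(Y_t)$ arises this way. Transitions that are impossible in one process are therefore also impossible in the other, which establishes the identity. One subtlety needs attention: when $L$ is small, distinct pairs $(k,\ell)$ could give the same target configuration, for example a right move and a left move between the same pair of sites. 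To handle this, I will define rates as sums over all moves yielding $\eta'$. The move-by-move bijection then preserves these sums.

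Finally, I substitute into the master equation for $(Y_t)$ at $\zeta = S_R\eta$. Writing $\zeta' = S_R\eta'$ and using $\tilde\pi(S_R\eta') = \pi(\eta')$, the incoming current becomes
\[
\sum_{\eta'} \tilde\pi(S_R\eta')\,\text{rate}_Y(S_R\eta' \to S_R\eta) = \sum_{\eta'} \pi(\eta')\,\text{rate}_X(\eta'\to\eta).
\]
The outgoing current transforms in the same way. The two sides are equal because $\pi$ is stationary for $(X_t)$. As $\eta$ ranges over all of $\Omega_{L,N}$, so does $\zeta$, so $\tilde\pi$ is stationary; by \cref{prop:irred-LRMP} it is in fact the unique stationary distribution. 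The main obstacle is purely bookkeeping: keeping the modulo-$L$ index shift straight in the rate parameters and the emptiness conditions.
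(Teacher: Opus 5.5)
Your proposal is correct and follows the paper's own argument: $S_R$ sends each move $k\to\ell$ of $(X_t)$ to the move $k+1\to\ell+1$ of $(Y_t)$, with the same direction, the same emptiness condition and the same rate $x_\ell\,u(\eta_k,\eta_\ell)$ (times $q$ if leftward), so the master equation transfers. Your extra bookkeeping makes explicit what the paper leaves implicit: you sum rates over all moves producing the same target configuration, and you write out the substitution $\tilde\pi(S_R\eta')=\pi(\eta')$ in the master equation. One small correction: the double-move case arises whenever every site other than $k$ and $\ell$ is empty, for any $L$, not only when $L$ is small.
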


\begin{proof}
	Consider a configuration $\eta \in \Omega_{L,N}$, along with sites $k$ and $\ell$, such that there is a transition where a particle moves from site $k$ to site $\ell$ in $(X_t)$, resulting in the configuration $\omega$.
    If the particle moves right (resp. left) in this transition, the rate of the transition is $x_\ell\, u(\eta_k,\eta_{\ell})$ (resp. $q \, x_\ell \, u(\eta_k,\eta_{\ell})$).
	Then, there is a transition from $S_R(\eta)$ to $S_R(\omega)$ where a particle moves from site $k+1$ to site $\ell+1$.
	The rate parameter corresponding to site $\ell+1$ in $(Y_t)$ is also $x_{\ell}$.
	Again, if the particle moved right (resp. left) in the initial system, then the particle moves right (resp. left) in the new system.
	So, the rate of the transition from $\eta$ to $\omega$ in $(X_t)$ is the same as the rate of the transition from $S_R(\eta)$ to $S_R(\omega)$ in $(Y_t)$.
	Hence, the stationary distribution in $(Y_t)$ is $\pi\circ S_R^{-1}$.
\end{proof}

The following result is a direct consequence of the translation covariance proved in \cref{prop:trans-inv}.

\begin{cor}
    \label{cor:one fn}
    Suppose that the stationary distribution for the PALRMP corresponding to $u$ is factorised with one-point functions $(f_\ell)_{\ell=1}^\infty$.
    Then, there exists a single function $f:\R_{>0}\times \N_0 \to \R$, such that for all $\ell \in \N$,
    $f_\ell(m) = f(x_\ell,m)$.
\end{cor}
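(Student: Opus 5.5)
The idea is to use \cref{prop:trans-inv} together with uniqueness of the stationary distribution (\cref{prop:irred-LRMP}) to show that the one-point function at a site depends only on that site's rate parameter, up to the rescaling freedom of \cref{prop:weight-func-rescale}. The interpretation is that the rate function $u$ (and $q$) is fixed, while the sequence of rate parameters $(x_\ell)$ may be arbitrary. The claim is that the factorising functions can be chosen as $f(x_\ell,\cdot)$ for a single $f$.

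\textbf{Step 1: one site, $L=1$.} Fix $x>0$ and consider a ring whose sites all carry the parameter $x$. By \cref{prop:trans-inv}, the shifted process is the same process. By uniqueness, $\pi\circ S_R^{-1}=\pi$, so $\prod_\ell f_{\ell-1}(\eta_\ell)\propto \prod_\ell f_\ell(\eta_\ell)$ on $\Omega_{L,N}$.

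\textbf{Step 2: a general ring.} Take an arbitrary ring with parameters $x_1,\dots,x_L$. Compare it with the ring obtained by cyclically relabelling the sites. The one-point function at a site with parameter $x$ is then determined, up to a factor $ab^m$, by the local data $x$ alone. To see this, compare two sites carrying the same parameter $x$, in two possibly different systems.

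\textbf{Step 3: showing $f_\ell(m)/f_\ell(0)$ depends only on $x_\ell$.} Ratios of stationary probabilities for configurations that differ only by moving particles between site $\ell$ and a fixed reference site are intrinsic. The one-point functions are determined, up to the normalisation $a b^m$ of \cref{prop:weight-func-rescale}, by the stationary distributions for all $N$. Applying \cref{prop:trans-inv} transports the stationary distribution of one system to the shifted system with shifted parameters, and hence transports the one-point function of site $\ell$ to site $\ell+1$.

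\textbf{Step 4: defining $f$.} Fix the gauge by $f_\ell(0)=1$. To remove the factor $b^m$, use a reference site $1$ with a fixed parameter $x_1$ and normalise so that $f_1=f(x_1,\cdot)$. Then define $f(x,m)$ as the (gauge-fixed) one-point function of any site with parameter $x$. Check, via the shift-and-uniqueness argument, that this is independent of the choice of site and of system.

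The main obstacle is Step 4's well-definedness. The gauge freedom $b^m$ could a priori depend on the system, so showing that it can be chosen consistently is the delicate point. The approach I would try first is to pin $b$ through one common reference site shared across systems.
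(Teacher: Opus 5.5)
You follow the same route as the paper, which simply declares the corollary a direct consequence of \cref{prop:trans-inv}. Once spelled out, though, this route has a genuine gap in Steps 2--4, and it is not the gauge issue you flag. Translation covariance plus uniqueness only identifies (up to rescaling) the one-point function of site $\ell$ in the ring $(x_1,\dots,x_L)$ with that of site $\ell+1$ in the rotated ring $(x_L,x_1,\dots,x_{L-1})$. The whole environment rotates together with the site. It never compares two sites with the same parameter whose surroundings are not rotations of one another, for example the $x$-site in $(x,y,z)$ versus $(x,z,y)$, the $x$-site in rings of different lengths, or the two $x$-sites of a single ring $(x,y,x,z)$. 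So the assertion in Step 2 that the one-point function is fixed ``by the local data $x$ alone'' is the corollary itself. Likewise, the independence check in Step 4 cannot be carried out by shift-and-uniqueness. Step 1 is fine: in a ring with all parameters equal, shift invariance forces all $f_\ell$ to be proportional, because the common factor $b$ must satisfy $b^L=1$. But it says nothing about inhomogeneous rings.

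No argument using only these ingredients can work. Consider the TALRMP with site parameters $y_\ell=x_\ell x_{\ell+1}$, regarded as a process parametrised by the $x_\ell$. It is irreducible, translation covariant in exactly the sense of \cref{prop:trans-inv}, and has the same rescaling freedom. Yet for $u(m,n)=\phi(n)$, \cref{thm:palrmp-factorised} gives the one-point function $(x_\ell x_{\ell+1})^m\prod_{i=1}^m\phi(i-1)$ at site $\ell$. In the ring $(x,y,x,z)$ with $y\neq z$, the two sites with parameter $x$ then have one-point functions with ratio $(y/z)^m$, which no choice of site constants and common $b^m$ removes.

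The $b^m$ freedom is a red herring. A factor $b^{\eta_\ell}$ common to all sites of one system multiplies every unnormalised weight on $\Omega_{L,N}$ by $b^N$, which cancels against $Z_{L,N}$, so it can be fixed system by system. Pinning it via a ``common reference site'' would in any case presuppose that the reference site's own one-point function is independent of the rest of the ring, which is circular.

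What is really needed is that $f_\ell(n)/f_\ell(n-1)$, up to a factor common to the whole system, depends only on $x_\ell$ and $n$. This has to be extracted from the master equation, using that the rate of a jump \emph{into} a site carries only that site's parameter. For $L=2$, for instance, the chain on $\Omega_{2,N}$ is birth--death, and detailed balance for all $N$ forces $f_\ell(n)/f_\ell(n-1)=\beta\,x_\ell\,u(1,n-1)/u(n,0)$ with $\beta$ common to both sites.

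You cannot get the general case by quoting the single-empty-site computation of \cref{lem:talrmp-form-f}. That computation relies on \cref{lem:me-factorised-stationary}, whose proof goes through \cref{prop:mu(eta)-transition-correspond-current}, and that proposition already assumes one-point functions of the form $f(x_\ell,\cdot)$ on the smaller system. In effect, the paper treats this locality as part of the setup from this corollary on.
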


So, from this point, when referring to the one-point function of a rate $u$ having a factorised stationary distribution for the PALRMP, we refer to $f$ only instead of $(f_\ell)_{\ell=1}^\infty$.

\begin{rem}
    \label{rem:one fn}
    Given a function $g:\R_{>0}\times \N_0 \to \R_{>0}$, we can define a probability distribution $\sigma$ on $\Omega_{L,N}$, for all $L,N \in \N$, by \eqref{eqn:factorised-prob-dist}, where $g_\ell(m) = g(x_\ell,m)$.
    Similarly, in the homogeneous case, given $g:\N_0 \to \R_{>0}$, we can define a probability distribution $\sigma$ on $\Omega_{L,N}$, for all $L,N \in \N$.
\end{rem}

We now state the main results of this work.

\begin{thm}
    \label{thm:palrmp-factorised}
	For $q\neq 1$, the stationary distribution corresponding to $u$ in the PALRMP is factorised if and only if there exists $\phi: \N_0 \to \R_{> 0}$, such that for all $n \in \N_0$,
	\begin{equation}
		\label{eqn:palrmp-factorised-cond}
		u(m,n) = \begin{cases}
		    \phi(n), \qquad &m \geq 1,\\
            0, \qquad &m=0.
		\end{cases}
	\end{equation}	
	The one-point function corresponding to $u$ is given by
	\begin{equation}
		\label{eqn:palrmp-factorised-weight}
		f(x,n) = x^n \prod_{i=1}^{n} \phi(i-1).
	\end{equation}
\end{thm}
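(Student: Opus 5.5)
Both directions go through the master equation \eqref{mastereq} with the ansatz $\sigma(\eta)\propto\prod_\ell f(x_\ell,\eta_\ell)$. By \cref{prop:irred-LRMP} the stationary distribution is unique, so for sufficiency it is enough to check that $\sigma$ satisfies \eqref{mastereq}.

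\textbf{Sufficiency.} Assume $u(m,n)=\phi(n)$ for $m\ge1$ and take $f$ as in \eqref{eqn:palrmp-factorised-weight}. Then
\[
\frac{f(x,n+1)}{f(x,n)}=x\,\phi(n),\qquad \frac{\sigma(\eta^{\ell,k})}{\sigma(\eta)}=\frac{x_k\,\phi(\eta_k)}{x_\ell\,\phi(\eta_\ell-1)}
\]
for $\eta_\ell\ge1$. Consider an incoming rightward move $k\to\ell$ into $\eta$. In $\eta$, site $\ell$ is occupied, the sites strictly between $k$ and $\ell$ are empty, and the rate is $x_\ell\phi(\eta_\ell-1)$. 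Its current is therefore $\sigma(\eta)\,x_k\phi(\eta_k)$. So the incoming current from the right-moving transitions is $\sigma(\eta)\sum_\ell\sum_k x_k\phi(\eta_k)$, summed over occupied $\ell$ and over the sites $k$ lying between $\ell$ and the previous occupied site to its left, that site included. Each site $k$ gets counted exactly once, namely for the first occupied site strictly to its right, provided at least one other occupied site exists. The case where only one site is occupied needs separate care, since the walk wraps around and $k$ can equal $\ell$'s predecessor in several ways; I would check it by hand.

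The outgoing right current is $\sigma(\eta)\sum_k x_{\ell(k)}\,\phi(\eta_{\ell(k)})$. Here, for each occupied $k$, the target $\ell(k)$ ranges over the sites up to and including the next occupied site. Reindexing by the target $\ell$ again gives each site once. So the right-moving parts balance, and the left-moving parts (factor $q$) balance by the mirror argument. This is a pairwise-balance-type regrouping rather than termwise equality. Note that the right and left parts balance separately, so $q$ plays no role and sufficiency holds for every $q$.

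\textbf{Necessity.} Assume $\pi$ is factorised with one-point function $f(x,\cdot)$, which exists by \cref{cor:one fn}. Plug it into \eqref{mastereq} on small systems and choose the parameters $x_\ell$ freely.
\begin{enumerate}[label=(\roman*)]
\item Take $L=2$, $N=1$ to get relations between $f(x,1)/f(x,0)$ for different $x$ and $u(1,0)$.
\item Take $L=2$ with general $N$ and configurations $(m,n)$. Because the $x_\ell$ are free, the balance equation is an identity in $x_1,x_2$. I would separate coefficients of the distinct monomial or rational dependences on $x_1,x_2$ to isolate $u(m,n)$.
\item Use $L=3$ configurations such as $(m,n,0)$ to compare the long-range jump over the empty site with nearest-neighbour jumps; this should force $u(m,n)$ to be independent of $m$.
\end{enumerate}
The condition $q\ne1$ should enter at exactly this point. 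The $q$-weighted and the unweighted terms carry different $x$-dependence, so for $q\ne1$ the equations split into a right-part and a left-part, each of which must hold separately. In the symmetric case they can cancel against each other, which allows additional solutions.

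\textbf{Main obstacle.} I expect the hardest step to be extracting $m$-independence of $u(m,n)$ cleanly from the $L=3$ (or $L=2$) equations. For that step I would first write the balance at $\eta=(m,n,0)$ and at $\eta=(m,0,n)$, divide by $\sigma(\eta)$, and treat the result as a polynomial identity in $x_3$, whose site is empty. Comparing coefficients of $x_3$ and using $q\ne1$ should give $u(m,n)=u(1,n)=:\phi(n)$. Once that is established, solving $f(x,n+1)/f(x,n)=c\,x\,\phi(n)$ recovers \eqref{eqn:palrmp-factorised-weight}, up to the rescaling allowed by \cref{prop:weight-func-rescale}.
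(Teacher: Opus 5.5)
Your sufficiency argument is correct, and it is more direct than the paper's. Pair the incoming move that departs from site $k$ with the outgoing move that arrives at site $k$: both carry current $\sigma(\eta)\,x_k\phi(\eta_k)$, separately in each direction. This is the site-by-site pairwise balance the paper remarks on after \cref{lem:talrmp-factorised-converse}. The paper itself proves sufficiency through \cref{lem:me-new} and \cref{lem:alrmp-iff-talrmp}.

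\textbf{The gap is in necessity.} The configurations you plan to use cannot force $u(m,n)$ to be independent of $m$. For $L=2$, and for $L=3$ with an empty site, at most two sites are occupied. A particle can travel between them rightwards or leftwards with rates $x_\ell u$ and $q\,x_\ell u$ of identical form, so these merge into a common factor $(1+q)$. The moves among occupied sites are therefore indistinguishable from the SLRMP. Concretely, at $(m,0,n)$:
\begin{itemize}
\item The empty-site part of the balance has the form $u(m,0)+q\,u(n,0)=B(n)+q\,B(m)$ for a function $B$ determined by $f$. For $q\neq 1$ this yields only that $u(\cdot,0)$ is constant and that $f(x,n)\propto x^n f(x,0)\prod_{i=1}^n u(1,i-1)$.
\item The remaining terms (moves between sites $1$ and $3$) give only $u(m,n)/u(1,n)=u(n+1,m-1)/u(1,m-1)$.
\end{itemize}
All of these are satisfied by the rates built in the proof of \cref{thm:system-arb-weight}(2), which have $u(1,1)\neq u(2,1)$. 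So no comparison of coefficients of $x_3$ at $(m,n,0)$ or $(m,0,n)$ can yield $u(m,n)=u(1,n)$. Relatedly, $q\neq1$ does not act because left and right terms have different $x$-dependence; between two occupied sites they have the same.

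\textbf{The missing idea} is a configuration with all sites occupied and $L\ge 3$, where the left and right neighbours of a site have independent occupations. Once $f(x,k+1)/f(x,k)=x\,u(1,k)$ is known, equating \eqref{eqn:palrmp-out-cur} and \eqref{eqn:palrmp-in-cur} gives an identity linear in the $x_\ell$. Write $a=\eta_{j-1}$, $n=\eta_j$, $b=\eta_{j+1}$ and $Q(b,n)=\frac{u(1,n)}{u(1,b-1)}u(n+1,b-1)$. The coefficient of $x_j$ then reads $u(a,n)+q\,u(b,n)=Q(b,n)+q\,Q(a,n)$. Separating variables gives $u(a,n)-q\,Q(a,n)=C(n)=Q(b,n)-q\,u(b,n)$. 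Setting $a=b$ gives $(1+q)(u-Q)=0$, and then $(1-q)\,u(a,n)=C(n)$; this is exactly where $q\neq1$ enters.

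The paper instead first reduces to $q=0$ via \cref{lem:alrmp-iff-talrmp}. Let $D$ be the rightward imbalance. The balances at $\eta$ and at its mirror image $\tilde\eta$ give $D(\eta)+q\,D(\tilde\eta)=0=q\,D(\eta)+D(\tilde\eta)$, hence $D=0$ when $q\neq 1$. It then reads $m$-independence off the all-occupied TALRMP equation: there the coefficient of $x_\ell$ has a right-hand side free of $\eta_{\ell-1}$.
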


\cref{thm:palrmp-factorised} for the TALMRP
is proved in \cref{sec:talrmp} and for the PALRMP in \cref{sec:palrmp}.

\begin{thm}
	\label{thm:hpalrmp-factorised}
	Suppose $q \neq 1$ in the homogeneous PALRMP.
    \begin{enumerate}[leftmargin = *]
        \item[(1)]
        The stationary distribution corresponding to $u$ in the homogeneous PALRMP is factorised if and only if for all $m,n \in \N$, $u$ satisfies the relations
        \begin{subequations}
        \label{eqn:hpalrmp-factorised-cond}
        	\begin{align}
        		\label{eqn:hpalrmp-factorised-cond-a}
            		u(m,n) =& \dfrac{u(1,m)}{u(1,n-1)}u(m+1,n-1)\cr
                    &+(u(m,1) - u(1,m)) - (u(n,1) - u(1,n)), \\ 
        		\label{eqn:hpalrmp-factorised-cond-b}            
        		      u(m,0) =& u(n,0). 
        	\end{align}
        \end{subequations}
    	The one-point function corresponding to $u$ is given by
    	\begin{equation}
    		\label{eqn:hpalrmp-factorised-weight}
    		f(n) = \prod_{i=1}^n u(1,i-1).
    	\end{equation}

        \item[(2)]
        The following are equivalent.
        \begin{enumerate}[leftmargin = *]
            \item[(a)]
            The stationary distribution corresponding to $u$ is factorised in the homogeneous PALRMP,

            \item[(b)]            
            There exist functions $b: \N_0\to\R_{> 0}$ and $c : \N_0 \to \R$, where $c(1) = 0$, such that for all $m,n \in \N$,
        	\begin{equation}
        		\label{eqn:hpalrmp-form-u-bc-end}
                u(m,n) = b(0)\prod_{k=1}^n\dfrac{b(m+k-1)}{b(n-k)}
                    + \sum_{\ell=0}^{n-1}\bigg((c(m+\ell) - c(n-\ell))\prod_{k=1}^\ell \dfrac{b(m+k-1)}{b(n-k)}\bigg).
        	\end{equation}
            In this case, for all $m \in \N$, $u(m,0) = b(0)$, $b(m) = u(1,m)$ and $c(m) = u(m,1) - u(1,m)$.
        	Then the one-point function corresponding to $u$ is given by
            \begin{equation}
            \label{eqn:hpalrmp-form-u-bc-end-form-f}
                f(n) = \prod_{i=1}^{n}b(i-1).
            \end{equation}

            \item[(c)]
            For all $m,n \in \N$
            \begin{subequations}                
                \label{eqn:hpalrmp-from-slrmp}
                \begin{align}
                    \label{eqn:hpalrmp-from-slrmp-a}
                    \dfrac{u(m,n)}{u(1,n)} &= \dfrac{u(n+1,m-1)}{u(1,m-1)},\\
                    \label{eqn:hpalrmp-from-slrmp-b}
                    u(m,n) - u(n,m) &= (u(m,1) - u(1,m)) - (u(n,1) - u(1,n)).
                \end{align}
            \end{subequations}
        \end{enumerate}
    \end{enumerate}
\end{thm}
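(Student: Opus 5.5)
The plan is to reduce to the homogeneous TALRMP by a reflection argument, and then analyse the master equation gap by gap. Let $R$ denote the generator restricted to right jumps with rates $u(\eta_k,\eta_\ell)$, and $\Lambda$ the generator restricted to left jumps with the same rates, so the homogeneous PALRMP has generator $R+q\Lambda$. Let $\rho$ be the reflection $\eta_\ell\mapsto\eta_{L+1-\ell}$. Conjugating by $\rho$ exchanges $R$ and $\Lambda$, and in the homogeneous case every product measure $\prod_\ell f(\eta_\ell)$ is $\rho$-invariant.

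\emph{Reduction.} Suppose $\pi=\prod f(\eta_\ell)/Z$ is stationary for $R+q\Lambda$. Then $\pi=\pi\circ\rho$ is also stationary for $\Lambda+qR$. Since $q\ge 0$ and $q\neq 1$, the system
\[
\pi(R+q\Lambda)=0,\qquad \pi(\Lambda+qR)=0
\]
has determinant $1-q^2\neq 0$, so $\pi R=\pi\Lambda=0$. Conversely, if $\pi R=0$ then $\pi\Lambda=0$ by reflection, and hence $\pi(R+q\Lambda)=0$. So both parts of the theorem are statements about the homogeneous TALRMP, which I would prove directly. Irreducibility and uniqueness come from \cref{prop:irred-LRMP}, and scaling of $f$ from \cref{prop:weight-func-rescale}.

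\emph{Master equation for the TALRMP.} Write $\eta$ as a cyclic list of nonempty sites carrying $m_1,\dots,m_r$ particles, with $g_i\ge 0$ empty sites between the $i$-th and $(i+1)$-th nonempty site. The outgoing rate is
\[
\sum_i \bigl(g_i\,u(m_i,0)+u(m_i,m_{i+1})\bigr).
\]
A particle now at nonempty site $i+1$ arrived either from nonempty site $i$ or from one of the $g_i$ gap sites, which previously held exactly one particle. The incoming current divided by $\pi(\eta)$ is therefore
\[
\sum_i\Bigl(g_i\,\tfrac{f(1)f(m_{i+1}-1)}{f(0)f(m_{i+1})}\,u(1,m_{i+1}-1)+\tfrac{f(m_i+1)f(m_{i+1}-1)}{f(m_i)f(m_{i+1})}\,u(m_i+1,m_{i+1}-1)\Bigr).
\]
The gaps $g_i$ vary freely once $L$ is allowed to vary. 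Comparing the coefficient of $g_i$ shows that $u(m,0)$ is independent of $m$, which is \eqref{eqn:hpalrmp-factorised-cond-b}. It also gives
\[
\frac{f(n)}{f(n-1)}\propto u(1,n-1),
\]
which after rescaling is \eqref{eqn:hpalrmp-factorised-weight}. The remaining condition is that $\sum_i h(m_i,m_{i+1})=0$ around every cycle, where
\[
h(m,n)=u(m,n)-\tfrac{u(1,m)}{u(1,n-1)}\,u(m+1,n-1).
\]
Cycles of length one and two force $h(m,m)=0$ and $h(m,n)=-h(n,m)$. Cycles of length three then give $h(m,n)=h(m,1)-h(n,1)$. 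Finally, $h(m,1)=u(m,1)-u(1,m)$ because $u(m+1,0)=u(1,0)$. This yields \eqref{eqn:hpalrmp-factorised-cond-a}. Conversely, these conditions make the cyclic sums telescope, which proves part (1).

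\emph{Part (2).} For (a)$\Leftrightarrow$(b), I would unroll \eqref{eqn:hpalrmp-factorised-cond-a} as a first-order recurrence in $n$, decreasing $n$ while increasing $m$, down to $u(m+n,0)=b(0)$. This produces exactly the product-and-sum formula \eqref{eqn:hpalrmp-form-u-bc-end}. Conversely, that formula satisfies the recurrence, and the required normalisations follow by setting $n=1$ or $m=1$. For (c)$\Rightarrow$(a), put $m=1$ in \eqref{eqn:hpalrmp-from-slrmp-a} to get $u(n+1,0)=u(1,0)$. Applying \eqref{eqn:hpalrmp-from-slrmp-a} with $m$ and $n$ swapped gives
\[
\tfrac{u(1,m)}{u(1,n-1)}\,u(m+1,n-1)=u(n,m),
\]
and then \eqref{eqn:hpalrmp-factorised-cond-a} becomes \eqref{eqn:hpalrmp-from-slrmp-b}.

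The main obstacle I expect is (a)$\Rightarrow$(c), specifically deriving \eqref{eqn:hpalrmp-from-slrmp-a} from \eqref{eqn:hpalrmp-factorised-cond}. I would first try induction on $m+n$, combining \eqref{eqn:hpalrmp-factorised-cond-a} at $(m,n)$ and at $(n,m)$ with the antisymmetry $h(m,n)=-h(n,m)$. Failing that, I would substitute the explicit formula \eqref{eqn:hpalrmp-form-u-bc-end} and verify the identity directly by reindexing the products.
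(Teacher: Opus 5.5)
\paragraph{The gap: (a)$\Rightarrow$(c) is not proved.} You flag this direction as the main obstacle but never prove it, and your first strategy, as described, does not close.

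\begin{itemize}
\item The recurrence \eqref{eqn:hpalrmp-factorised-cond-a} preserves $m+n$, so induction on $m+n$ gains nothing.
\item Under \eqref{eqn:hpalrmp-factorised-cond-a} one already has $h(m,n)=c(m)-c(n)$, so the antisymmetry of $h$ carries no new information.
\item Combining \eqref{eqn:hpalrmp-factorised-cond-a} at $(m,n)$ and at $(n,m)$ expresses $u(m,n)-u(n,m)$ through $u(m+1,n-1)$ and $u(n+1,m-1)$. These are not mirror images of each other, so no inductive hypothesis applies.
\end{itemize}

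The missing idea is to move along the anti-diagonal towards the diagonal. As you noticed in (c)$\Rightarrow$(a), given \eqref{eqn:hpalrmp-factorised-cond-a}, \eqref{eqn:hpalrmp-from-slrmp-a} is equivalent to \eqref{eqn:hpalrmp-from-slrmp-b}. So it suffices to show $D(m,n):=u(m,n)-u(n,m)-c(m)+c(n)=0$, with $b(m)=u(1,m)$ and $c(m)=u(m,1)-u(1,m)$.

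The paper does this by iterating \eqref{eqn:hpalrmp-factorised-cond-a} exactly $n-m$ times for $m<n$. This lands on $u(n,m)$ with coefficient $\prod_{k=1}^{n-m}b(m+k-1)/b(n-k)=1$. The $c$-terms with indices $\ell_1+\ell_2=n-m$ cancel in pairs, and the middle one vanishes, leaving exactly $c(m)-c(n)$.

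An equivalent route: applying \eqref{eqn:hpalrmp-factorised-cond-a} at $(m,n)$ and at $(n-1,m+1)$, rather than at $(n,m)$, gives
\[
D(m,n)=\frac{b(m)}{b(n-1)}\,D(m+1,n-1).
\]
Induction on $n-m$ then ends either at $D(k,k)=0$ or at $D(k,k+1)=D(k+1,k)=-D(k,k+1)$, which forces $D(k,k+1)=0$.

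\paragraph{The rest of your plan.} It is sound and close to the paper.

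\begin{itemize}
\item The reflection argument with determinant $1-q^2$ is \cref{lem:alrmp-iff-talrmp}.
\item Varying the gap lengths $g_i$ is a more direct replacement for the $\hat\eta,\ol{\eta}$ machinery of \cref{lem:me-new,lem:me-factorised-stationary}. It gives both the necessity and the converse straight from the master equation.
\item Cycles of length two and three do the work of \cref{lem:circular-sum-2}.
\end{itemize}

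Two small corrections:
\begin{itemize}
\item There are no length-one cycles. A single occupied site produces no $u(m,m)$ term, so your current formula must be adjusted in that case. Nothing is lost, since $h(m,m)=0$ already follows from the length-two cycle $(m,m)$.
\item In (b)$\Rightarrow$(a), the normalisation $u(1,n)=b(n)$ does not simply ``follow by setting $m=1$''. The leftover sum $\sum_{\ell}(c(1+\ell)-c(n-\ell))\prod_{k\le\ell}b(k)/b(n-k)$ vanishes only by the same pairing cancellation, which is \cref{prop:hpalrmp-consistent}.
\end{itemize}
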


\cref{thm:hpalrmp-factorised}(1) for the homogeneous TALRMP is proved in \cref{sec:htalrmp}.
\cref{thm:hpalrmp-factorised}(1) for the homogeneous PALRMP and \cref{thm:hpalrmp-factorised}(2) are proved in \cref{sec:hpalrmp}.

\begin{thm}
	\label{thm:(h)slrmp-factorised}
	The stationary distribution corresponding to u both in the SLRMP and in
    the homogeneous SLRMP is factorised if and only if for all $m,n \in \N$,
	\begin{equation}
		\label{eqn:(h)slrmp-factorised-cond}
		\dfrac{u(m,n)}{u(m,0)u(1,n)} = \dfrac{u(n+1,m-1)}{u(n+1,0)u(1,m-1)}.
	\end{equation}
	The one-point function corresponding to $u$ in the case of the SLRMP is given by
	\begin{equation}
		\label{eqn:slrmp-factorised-weight}
		f(x,n) = x^n \prod_{i=1}^n \dfrac{u(1,i-1)}{u(i,0)},
	\end{equation}
    and in the case of the homogeneous SLRMP, it is given by
    \begin{equation}
		\label{eqn:hslrmp-factorised-weight}
		f(n) = \prod_{i=1}^n \dfrac{u(1,i-1)}{u(i,0)}.
	\end{equation}
\end{thm}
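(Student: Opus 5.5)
The plan is to prove sufficiency by detailed balance and necessity from the two-site ring. Both rest on one observation. In the SLRMP ($q=1$), the transition $\eta\to\eta^{k,\ell}$ moves a particle from $k$ to $\ell$ across empty sites. Its reverse $\eta^{k,\ell}\to\eta$ moves a particle from $\ell$ back to $k$ across the same sites, traversed in the opposite direction, and these sites are still empty. So every transition has a reverse transition, and since $q=1$ both carry the same factor. The rates are
\[
\text{rate}(\eta\to\eta^{k,\ell}) = x_\ell\,u(\eta_k,\eta_\ell), \qquad \text{rate}(\eta^{k,\ell}\to\eta) = x_k\,u(\eta_\ell+1,\eta_k-1).
\]

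\textbf{Sufficiency.} Assume \eqref{eqn:(h)slrmp-factorised-cond} holds, and take $f(x,n)=x^nF(n)$ with $F(n)=\prod_{i=1}^n u(1,i-1)/u(i,0)$, as in \eqref{eqn:slrmp-factorised-weight}. I check detailed balance for each pair $\eta,\eta^{k,\ell}$, writing $m=\eta_k\ge 1$ and $n=\eta_\ell$. The powers of $x$ cancel: $x_k^m x_\ell^{n}\cdot x_\ell = x_k^{m-1}x_\ell^{n+1}\cdot x_k$. What remains is
\[
F(m)F(n)\,u(m,n) = F(m-1)F(n+1)\,u(n+1,m-1).
\]
Substituting $F(m)/F(m-1)=u(1,m-1)/u(m,0)$ and $F(n+1)/F(n)=u(1,n)/u(n+1,0)$ turns this into exactly \eqref{eqn:(h)slrmp-factorised-cond}. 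The case $n=0$ holds trivially. Irreducibility (\cref{prop:irred-LRMP}) then gives that this product measure is the stationary distribution. The homogeneous case is the specialisation $x_\ell=1$.

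\textbf{Necessity.} Suppose the stationary distribution is factorised with one-point function $f$, using \cref{cor:one fn}. Consider $L=2$ with arbitrary $N$. The chain is then a birth--death chain in $\eta_1$: from $(m,n)$ a particle jumps from site $1$ to site $2$ either rightward or leftward, so the total rate is $2x_2u(m,n)$. A birth--death chain is reversible, so detailed balance holds, giving
\[
f(x_1,m)f(x_2,n)\,x_2\,u(m,n) = f(x_1,m-1)f(x_2,n+1)\,x_1\,u(n+1,m-1)
\]
for all $m\ge1$, $n\ge0$, since $N=m+n$ ranges over $\N$.

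\begin{enumerate}[leftmargin=*]
\item Homogeneous case. Setting $n=0$ gives $f(m)/f(m-1)=C\,u(1,m-1)/u(m,0)$ with $C=f(1)/f(0)$. This recovers \eqref{eqn:hslrmp-factorised-weight} up to the rescaling of \cref{prop:weight-func-rescale}. Substituting back, $C$ cancels and \eqref{eqn:(h)slrmp-factorised-cond} follows.
\item Inhomogeneous case. Taking $x_1=x_2=x$ gives the same derivation for each fixed $x$, so the condition is again necessary.
\item $x$-dependence of $f$. Set $n=0$ with general $x_1,x_2$. The identity separates as
\[
\frac{f(x_1,m)}{x_1\,f(x_1,m-1)}\cdot\frac{u(m,0)}{u(1,m-1)} = \frac{f(x_2,1)}{x_2\,f(x_2,0)} = \text{const}.
\]
Hence $f(x,m)=a(x)\,(bx)^m F(m)$. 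The per-site factor $a(x_\ell)$ cancels in the normalisation and $b^m$ is removed by \cref{prop:weight-func-rescale}, which yields \eqref{eqn:slrmp-factorised-weight}.
\end{enumerate}

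\textbf{Main obstacle.} The only step needing care is the necessity direction: justifying that detailed balance, rather than merely the full master equation, may be used. I handle this by the reversibility of birth--death chains on $L=2$. There I must confirm that the two neighbour moves between sites $1$ and $2$ (rightward and leftward) are counted correctly and both involve the same target-site factor. They are, and both have rate $x_\ell u$, so the factor $2$ cancels.
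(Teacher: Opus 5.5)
Your proof is correct. The sufficiency half is essentially the paper's: detailed balance between each directed move and its reverse, as indicated in the proof of \cref{lem:slrmp-factorised-converse}. Your necessity half, however, takes a genuinely different and much shorter route.

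The paper works with general $L$. It first fixes the form of $f$ from configurations with a single empty site (\cref{lem:slrmp-form-f,lem:hslrmp-form-f}, via \cref{lem:me-factorised-stationary}(2)). It then imposes the master equation on fully occupied configurations. In the inhomogeneous case it compares coefficients of each $x_\ell$ to obtain \eqref{eqn:slrmp-me-no-empty} and sets $k=m$. In the homogeneous case that separation is unavailable, so it needs \cref{cor:circular-sum-2-real} to reach $G(m,n)+G(n,m)=0$, and then the triangular elimination of \cref{lem:hslrmp-simplify-u}.

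You instead use that factorisation requires a single $f$ valid for every $L$, including $L=2$. There the chain is birth--death and hence reversible, so detailed balance gives the pairwise identity immediately. This treats the homogeneous and inhomogeneous cases uniformly, with no cyclic-sum lemma and no linear algebra.

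What the paper's route buys is uniformity with the asymmetric cases. The $L=2$ chain is birth--death for every $q$, so it only ever yields this Cocozza-type condition. The extra constraints in \cref{thm:palrmp-factorised,thm:hpalrmp-factorised}, such as $u(m,0)$ being constant, are invisible at $L=2$ and only emerge for $L\ge 3$. So the general-$L$ machinery is unavoidable there, and the paper reuses it for $q=1$.

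Your argument, in turn, makes transparent why factorisation for any $q$ forces factorisation in the SLRMP. The $L=2$ condition is necessary for every $q$, and your detailed-balance computation shows it is sufficient when $q=1$.
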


The proofs of \cref{thm:(h)slrmp-factorised} for the SLRMP and the homogeneous SLRMP are given in \cref{sec:slrmp,sec:hslrmp}, respectively.

\begin{rem}
    \label{rem:(h)slrmp-similar-misanthrope}
    We note that \eqref{eqn:(h)slrmp-factorised-cond} is the same as \cite[Equation (2.3)]{cocozza-1985}.
    Cocozza-Thivent~\cite{cocozza-1985} showed that if the rate function $u$ satisfies \cite[Equation (2.3) and (2.4b)]{cocozza-1985}
    \begin{equation*}
    \begin{split}
        \dfrac{u(m,n)}{u(m,0)u(1,n)} =& \dfrac{u(n+1,m-1)}{u(n+1,0)u(1,m-1)}, \\
        u(n,m) - u(m,n) =& u(n,0) - u(m,0),
    \end{split}
    \end{equation*}
    for all $m \in \N$ and $n \in \N_0$, the stationary distribution for the misanthrope process on one-dimensional lattices is factorised.
\end{rem}

We note that if $u$ satisfies \eqref{eqn:palrmp-factorised-cond}, then it also satisfies \eqref{eqn:hpalrmp-factorised-cond} and \eqref{eqn:hpalrmp-from-slrmp}, which we expect, since if the stationary distribution is factorised in the case of the PALRMP, then it is also factorised in the case of the homogeneous PALRMP.
Similarly, if $u$ satisfies \eqref{eqn:hpalrmp-factorised-cond} (which is equivalent to \eqref{eqn:hpalrmp-from-slrmp} by \cref{thm:hpalrmp-factorised}(2), $u$ satisfies \eqref{eqn:(h)slrmp-factorised-cond}.
We note that \eqref{eqn:hpalrmp-from-slrmp-a} is similar to \eqref{eqn:(h)slrmp-factorised-cond}, when $u(m,0)$ is a constant function for $m \in \N$.

\begin{thm}
\label{thm:system-arb-weight}
	Fix a function $g: \N_0 \to \R_{> 0}$.
	\begin{enumerate}[leftmargin = *]
		\item[(1)]
		There exists a rate $u$ such that the stationary distribution corresponding to $u$ in the PALRMP is factorised and the one-point function corresponding to $u$ is given by
		\begin{equation}
			f(x,n) = x^n g(n).
		\end{equation}
		Furthermore, this rate $u$ is unique, upto a constant factor.
		
		\item[(2)]
		There exists a rate $u$ such that the stationary distribution corresponding to $u$ in the PALRMP is not factorised, but the stationary distribution corresponding to $u$ in the homogeneous PALRMP is factorised and the one-point function corresponding to $u$ is given by
		\begin{equation}
			f(n) = g(n).
		\end{equation}
		
		\item[(3)]
		There exists a rate $u$ such that the stationary distribution corresponding to $u$ in the homogeneous PALRMP is not factorised, but the stationary distribution corresponding to $u$ in the SLRMP and homogeneous SLRMP is factorised and the one-point function corresponding to $u$ is given by
		\begin{subequations}
            \begin{align}
    			f(x,n) &= x^n g(n),\\
    			f(n) &= g(n),
            \end{align}
		\end{subequations}
		for the SLRMP and the homogeneous SLRMP, respectively.
	\end{enumerate}
\end{thm}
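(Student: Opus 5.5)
Each part is an explicit construction that we check against the characterisations in \cref{thm:palrmp-factorised}, \cref{thm:hpalrmp-factorised} and \cref{thm:(h)slrmp-factorised}. By \cref{prop:weight-func-rescale} we may normalise so that $g(0)=1$; since all one-point functions are positive, $g>0$ throughout.

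\textbf{Part (1).} Set $\phi(n) = g(n+1)/g(n)$ and define $u$ by \eqref{eqn:palrmp-factorised-cond}. Then \eqref{eqn:palrmp-factorised-weight} gives
\[
f(x,n) = x^n\prod_{i=1}^n g(i)/g(i-1) = x^n g(n).
\]
By \cref{thm:palrmp-factorised} (taking $q\neq 1$, e.g.\ the TALRMP) the stationary distribution is factorised. For uniqueness, suppose $u'$ also yields a factorised distribution with one-point function $x^ng(n)$, up to the rescaling in \cref{prop:weight-func-rescale}. Then \cref{thm:palrmp-factorised} forces $u'(m,n)=\phi'(n)$ with $x^n\prod_{i\le n}\phi'(i-1) = a b^n x^n g(n)$. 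Comparing ratios in $n$ gives $\phi'(n) = b\,\phi(n)$, so $u' = b\,u$.

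\textbf{Part (2).} We take $u$ of the form \eqref{eqn:hpalrmp-form-u-bc-end} with $b(n) = g(n+1)/g(n)$ and a nonzero $c$ satisfying $c(1)=0$. A convenient choice is $c(m)=\varepsilon\,\delta_{m,2}$, or $c(m)=\varepsilon(m-1)$, with $\varepsilon>0$ small enough that positivity holds. The second choice makes every correction term $c(m+\ell)-c(n-\ell)$ equal to $\varepsilon(m-n+2\ell)$, whose sign we would have to control; it is safer to choose $c$ with $c\ge 0$ increasing, which makes $c(m+\ell)-c(n-\ell)\ge 0$ whenever $m+\ell\ge n-\ell$, and otherwise to take $\varepsilon$ small relative to the leading product term, which is positive.

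By \cref{thm:hpalrmp-factorised}(2) the homogeneous PALRMP is then factorised, with $f(n)=\prod b(i-1)=g(n)$. We also need the inhomogeneous PALRMP not to be factorised. By \cref{thm:palrmp-factorised} this reduces to showing that $u(m,n)$ depends on $m$ for some $n\ge1$. Already at $n=1$,
\[
u(m,1) = b(m) + c(m) = b(m) + (c(m)-c(1)),
\]
while $u(1,1)=b(1)$; any $c$ with $c(2)\ne0$ breaks $m$-independence.

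The main obstacle is positivity of $u(m,n)$ for all $m\ge1$, $n\ge0$ under arbitrary $g$, since the $b$-ratios may decay fast and $\varepsilon$ must be chosen uniformly. The fix I would try first is a single-point perturbation $c=\varepsilon\delta_{m,M}$, or a $c$ decaying fast enough relative to the products of $b$. One can alternatively avoid the closed form and verify \eqref{eqn:hpalrmp-from-slrmp} directly for a perturbed rate.

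\textbf{Part (3).} Define
\[
u(m,n) = \frac{g(n+1)}{g(n)}\,h(m),
\]
where $h>0$ is nonconstant on $\N$. Condition \eqref{eqn:(h)slrmp-factorised-cond} reads $\frac{g(n+1)/g(n)}{h(1)\,g(n+1)/g(n)} = \frac{1}{h(1)}$ on both sides, so it holds. The weights \eqref{eqn:slrmp-factorised-weight} and \eqref{eqn:hslrmp-factorised-weight} become $x^n g(n)/\bigl(h(1)^{-n}\prod_{i\le n} h(i)\bigr)$ times constants. To get exactly $g$, I would instead choose $u(m,n) = h(m)\,\frac{g(n+1)}{g(n)}\cdot\frac{\prod_{i<m}h(i)}{\prod_{i\le m}h(i)}$, i.e.\ adjust so that $u(i,0)/u(1,i-1)$ telescopes to $g(i-1)/g(i)$ up to a factor $b$. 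Concretely, pick $u(m,n) = \frac{g(n+1)}{g(n)}\,\psi(m)$ with $\psi(m)=1+\varepsilon\delta_{m,2}$; then $\prod_{i\le n}u(1,i-1)/u(i,0) = g(n)/\prod_{i\le n}\psi(i)$. This has the form $g(n)$ times a correction, and I would then redefine $g$ by precompensating, i.e.\ apply the construction to $\tilde g(n)=g(n)\prod_{i\le n}\psi(i)$ so that the resulting weight is exactly $g$.

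Finally, the homogeneous PALRMP fails to be factorised because \eqref{eqn:hpalrmp-factorised-cond-b} requires $u(m,0)$ to be constant in $m$. Here $u(m,0) \propto \psi(m)$, which is nonconstant, so \cref{thm:hpalrmp-factorised}(1) excludes factorisation.
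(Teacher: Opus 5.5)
\textbf{Parts (1) and (3).} These are essentially the paper's argument. In (1) you take $\phi(n)=g(n+1)/g(n)$ exactly as the paper does. Your uniqueness sketch goes beyond the paper's proof, which does not address uniqueness. It only needs the easy fact, already visible for $L=2$, that two one-point functions of the same stationary law differ by a factor $a(x)b^n$. After the detour, your final rate in (3), $u(m,n)=\psi(m)\phi(n)\psi(n+1)$ with $\psi$ nonconstant, is the paper's $\varphi(m)\phi(n)\varphi(n+1)$ fed into \cref{cor:(h)slrmp-special-u}.

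\textbf{Part (2): positivity is never established.} You never show that the $u$ defined by \eqref{eqn:hpalrmp-form-u-bc-end} is positive. This is needed for $u$ to be a rate at all, and to invoke \cref{thm:hpalrmp-factorised}(2). The fixes you suggest do not work as stated. Take $g(n)=1/n!$, so $b(n)=1/(n+1)$, and $c=\varepsilon\delta_{\cdot,2}$ with $\varepsilon>0$. For $m\ge 3$,
\[
u(m,2)=\frac{b(m)b(m+1)}{b(1)}+\bigl(c(m)-c(2)\bigr)+\bigl(c(m+1)-c(1)\bigr)\frac{b(m)}{b(1)}=\frac{2}{(m+1)(m+2)}-\varepsilon .
\]
This is negative for large $m$, however small $\varepsilon$ is. Because the leading product can decay, no fixed $\varepsilon$ is small relative to it uniformly.

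\textbf{The missing ingredient.} The paper uses \cref{lam:htalrmp-arb-b}: choose $c(p)$ by induction on $p=m+n-1$. For $m+n=p+1$ with $n<p$, the only not-yet-chosen value $c(p)$ enters $u(m,n)$ exactly once. Its coefficient $\prod_{k=1}^{n-1}b(m+k-1)/b(n-k)$ is positive. Meanwhile $u(1,p)=b(p)$ holds automatically by \cref{prop:hpalrmp-consistent}. So every positivity constraint is a lower bound on $c(p)$, and $c(p)$ can simply be taken large.

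Your instinct about nondecreasing $c$ can also be completed, but not by a smallness argument. Pair $\ell$ with $n-m-\ell$, as in the proof of (a)$\Rightarrow$(c). For $m<n$, the terms with $m+\ell<n-\ell$ then cancel exactly against their partners. Hence any nondecreasing $c$ with $c(1)=0$ gives $u>0$; for example $c(m)=\varepsilon(m-1)$ works for every $\varepsilon>0$.

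\textbf{Non-factorisation test.} Your test is also off. Since $u(2,1)=b(2)+c(2)$ equals $u(1,1)=b(1)$ when $c(2)=b(1)-b(2)$, the condition $c(2)\neq 0$ is not enough. You need $c(2)\neq b(1)-b(2)$, as the paper chooses. Alternatively, any $m$ with $b(m)+c(m)\neq b(1)$ suffices, which is automatic for unbounded $c$.
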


\cref{thm:system-arb-weight}(1), (2) and (3) are proved in \cref{sec:palrmp,sec:hpalrmp,sec:hslrmp}, respectively.

\section{Master equation for the partially asymmetric long range misanthrope process}
\label{sec:mastereq}

In this section, we analyse the master equation for the PALRMP on $\Omega_{L,N}$ in terms of the PALRMP of smaller sized systems. 

For $\eta \in \Omega_{L,N}$, let $S_\eta \subseteq [L]$ be the set of sites that contain at least one particle.
Now, for a transition into $\eta$, the arrival site will contain at least one particle in $\eta$, and so is in $S_\eta$.
Similarly, for a transition out of $\eta$, the departure site will contain at least one particle in $\eta$, and so is in $S_\eta$.
Thus, any transition of $\eta$ involves at least one site in $S_\eta$.

Consider $S_\eta = \{s_1,s_2,\dots,s_K\}$, where $1\leq s_1 < s_2 < \cdots < s_K \leq L$. 
We index the sites $s_i$, where $i$ is taken modulo $K$.
Then, for sites $s_a$ and $\ell$, there are no particles in the sites between them if and only if $s_{a-1}\leq \ell \leq s_{a+1}$.
So, for a transition where a particle moves from site $s_a$, the arrival site is contained between the sites in $S_\eta$ either side of $s_a$.
Similarly, for a transition into $\eta$, where the particle moves to site $s_a$, the departure site $k$ of the particle would have to be contained within the sites between the sites either side of $s_a$ in $S_\eta$, that is, $s_{a-1} \leq k \leq s_{a+1}$.

In the case of the TALRMP, particles can move only to the right.
Then, for a site $s_a$, if a particle moves from site $s_a$ to site $\ell$, then $s_a < \ell \leq s_{a+1}$.
Conversely, if there is a transition out of $\eta$ where a particle moves to site $\ell$, then the departure site would be the site in $S_\eta$ closest to the left of site $\ell$.
Again, for transitions into $\eta$, where a particle moves to site $s_a$, the departure site $k$ is such that $s_{a-1} \leq k < s_a$.
Conversely, if there is a transition into $\eta$ where a particle moves from site $k$, the arrival site would be the site in $S_\eta$ closest to the right of site $k$.
Thus, for a site $\ell$ in the TALRMP, there is a unique transition out of $\eta$ where a particle moves to site $\ell$ and a unique transition into $\eta$ where a particle moves to site $\ell$.

Assume that $N \neq 0$ and $|S_\eta| = K$.
We define $\hat{\eta} \in \Omega_{K,N}$ to be the configuration where the $i$'th site has $\eta_{s_i}$ particles and rate parameter $x_{s_i}$.
Similarly, assuming that the $K < L$ and site $L$ is empty, we define $\ol{\eta} \in \Omega_{K+1,N}$, where, for $1\leq i \leq K$, the $i$'th site has $\eta_{s_i}$ particles and rate parameter $x_{s_i}$, and the $(K+1)$'th site has no particles and rate parameter $x_L$.
For example, in case of the configuration $\eta = (0,3,2,0,1,0)$, $\hat{\eta} = (3,2,1)$ and $\ol{\eta} = (3,2,1,0)$.
We note $\hat{\eta}$ is a configuration where all sites contain particles and in any transition into or out of $\hat{\eta}$, particles move between neighbouring sites.
If site $L$ of $\eta$ is empty, then $\ol{\eta}$ is a configuration with a single empty site.

\begin{prop}
	\label{prop:mu(eta)-transition-correspond}
	Consider $\eta \in \Omega_{L,N}$.
	\begin{enumerate}[leftmargin=*]
		\item[(1)]
		There is a bijective correspondence between the transitions of $\eta$ where both the departure and arrival sites are in $S_\eta$ and the transitions of $\hat{\eta}$. Moreover, the rates of the two corresponding transitions are equal.
		
		\item[(2)]
		Suppose that site $L$ of $\eta$ is empty. Then, there is a bijective correspondence between the transitions of $\ol{\eta}$ involving site $(K+1)$ and the transitions of $\eta$ involving site $L$. 
        Moreover, the rates of the two corresponding transitions are equal.
	\end{enumerate}
\end{prop}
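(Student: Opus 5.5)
The plan is to construct the correspondences explicitly, using the observation above that two sites are connected exactly when all sites strictly between them, in the direction of motion, are empty. Throughout, the map $\iota:i\mapsto s_i$ from $[K]$ to $S_\eta$ (with $i$ taken modulo $K$) will be used to identify sites of $\hat{\eta}$ with occupied sites of $\eta$. By construction, $\hat{\eta}_i=\eta_{s_i}$, and the rate parameter of site $i$ in $\hat{\eta}$ is $x_{s_i}$.

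For part (1), consider a transition of $\eta$ whose departure and arrival sites both lie in $S_\eta$. If it is a transition out of $\eta$ from $s_a$ to $\ell\in S_\eta$, then $s_{a-1}\le \ell\le s_{a+1}$ together with $\ell\in S_\eta$ forces $\ell=s_{a\pm1}$. A right move therefore goes from $s_a$ to $s_{a+1}$ and a left move from $s_a$ to $s_{a-1}$, and these correspond to the moves $a\to a+1$ and $a\to a-1$ in $\hat{\eta}$. The rate in $\eta$ is $x_{s_{a\pm1}}u(\eta_{s_a},\eta_{s_{a\pm1}})$, times $q$ for a left move. The rate in $\hat{\eta}$ is $x_{s_{a\pm1}}u(\hat{\eta}_a,\hat{\eta}_{a\pm1})$, with the same factor, so the two rates agree.

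Transitions into $\eta$ are handled the same way. The preimage configuration is $\eta^{s_b,s_a}$ with $s_a\in S_\eta$ the arrival site, and the departure site $s_b$ lies in $S_\eta$ and is adjacent to $s_a$ in the cyclic order of $S_\eta$. All sites strictly between $s_b$ and $s_a$ are empty in both $\eta$ and $\eta^{s_b,s_a}$, so the move is allowed. Its rate is $x_{s_a}u(\eta_{s_b}+1,\eta_{s_a}-1)$, possibly times $q$, which is the rate of the corresponding transition into $\hat{\eta}$.

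Conversely, every transition of $\hat{\eta}$ is a move between cyclically neighbouring sites $i,i\pm1$, and it lifts under $\iota$ to the move between $s_i$ and $s_{i\pm1}$. These two assignments are mutually inverse, which gives the bijection.

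One degenerate case needs a line of care. When $K=1$ or $K=2$, the sites $s_{a+1}$ and $s_{a-1}$ may coincide. A move from $s_a$ to itself does not arise, since the arrival site must differ from the departure site. When $K=2$, a right move and a left move between the same pair of sites are different transitions with different rates ($1$ versus $q$). The same is true in $\hat{\eta}$ on a ring of two sites, so I will match transitions by their direction as well as their endpoints, and the bijection still holds.

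For part (2), site $L$ is empty, so a transition involving site $L$ must be either a move out of $\eta$ arriving at $L$ or a move into $\eta$ departing from $L$. Since $s_K<L<s_1+L$, the only occupied sites that see $L$ are $s_K$, which reaches $L$ by a right move, and $s_1$, which reaches $L$ by a left move. In $\ol{\eta}$ these are exactly site $K$ moving right to site $K+1$ and site $1$ moving left to site $K+1$. The rate parameters are $x_L$ in both, and the particle counts agree, including $0$ at the empty site. Transitions into $\eta$ departing from $L$ are analogous: the particle lands on $s_1$ by a right move or on $s_K$ by a left move, matching moves from $K+1$ to $1$ or to $K$ in $\ol{\eta}$, with rate $x_{s_1}u(1,\eta_{s_1}-1)$ or $q\,x_{s_K}u(1,\eta_{s_K}-1)$ in both.

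The main obstacle is careful bookkeeping rather than any deep step. One must check that the emptiness conditions hold in the preimage configurations, and one must keep the distinct right and left transitions separate in the degenerate cases of part (1) with $K\le 2$ and part (2) with $K=1$, where $s_1=s_K$.
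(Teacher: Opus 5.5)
Your proposal is correct and follows essentially the same route as the paper. You identify site $i$ of $\hat{\eta}$ and of $\ol{\eta}$ with $s_i$, and site $K+1$ of $\ol{\eta}$ with $L$; particle counts and the arrival-site rate parameter then agree, so the rates coincide, and your arguments that only cyclically consecutive occupied sites can exchange a particle and that right and left moves must be kept separate when $K\le 2$ fill in details the paper leaves implicit. One notational slip: in the paper's convention \eqref{eqn:eta-k,l}, the preimage for an incoming move from $s_b$ to $s_a$ is $\eta^{s_a,s_b}$ rather than $\eta^{s_b,s_a}$, although the rate $x_{s_a}u(\eta_{s_b}+1,\eta_{s_a}-1)$ you assign to it is the correct one.
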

\begin{proof}
	If $|S_\eta| =1$, then $\hat{\eta}$ would be a configuration with a single site, which has no transitions, and there are no transitions of $\eta$ where both the departure and arrival sites are in $S_\eta$.
	So, we only consider the case of $|S_\eta| > 1$.
	Let $|S_\eta| = K$ and $S_\eta = \{s_1,s_2,\dots,s_K\}$.	
	A transition out of (resp. into) $\hat{\eta}$, where a particle moves from site $a$ to site $b$ corresponds to the transition out of (resp. into) $\eta$ where a particle moves from site $s_a$ to site $s_b$.
	As the rate parameter of site $b$ in $\hat{\eta}$ and site $s_b$ in $\eta$ is $x_{s_b}$, the rates of both these transitions are equal.
	This proves part (1).
	
	Any transition out of (resp. into) $\ol{\eta}$ involving site $(K+1)$ is one where a particle moves to (resp. from) site $(K+1)$.
	Then, a transition out of (resp. into) $\ol{\eta}$, where a particle moves to (resp. from) site $(K+1)$ from (resp. to) site $a$, corresponds to a transition out of (resp. into) $\eta$ where a particle moves to (resp. from) site $L$ from (resp. to) site $s_a$, and by previous arguments in part (1), the rates of both these transitions are equal.
\end{proof}

\begin{prop}
	\label{prop:mu(eta)-transition-correspond-current}
	Consider $g: \R_{> 0}\times \N_0 \to \R$.
    Let $\sigma$ be the factorised probability distribution defined in terms of $g$ on $\Omega_{L, N}$ for all $L,N \in \N$, as in \cref{rem:one fn}.

    \begin{enumerate}[leftmargin = *]
		\item[(1)]		
		For $\eta \in \Omega_{L,N}$, the probability current due to a transition where both the departure and arrival sites are in $S_\eta$ is $(Z_{|S_\eta|,N}/Z_{L,N})\prod_{\ell \notin S_\eta} g(x_\ell,0)$ times the probability current of the corresponding transition of $\hat{\eta}$.
		
		\item[(2)]
		For $\eta \in \Omega_{L,N}$, suppose site $L$ is empty, so that $\ol{\eta}$ is well defined.
        Then, the probability current due to a transition involving site $L$ is $(Z_{|S_\eta|+1,N}/Z_{L,N})\prod_{\ell \notin (S_{\eta}\cup\{L\})} g(x_\ell,0)$ times the probability current of the corresponding transition of $\ol{\eta}$.
	\end{enumerate}
\end{prop}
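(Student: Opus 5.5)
The plan is a direct computation: write each probability current as $\sigma(\text{configuration}) \times \text{rate}$. Use \cref{prop:mu(eta)-transition-correspond} to match the rates, and compare the product weights of $\eta$ and $\hat\eta$ (resp.\ $\ol\eta$) site by site.

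For part (1), I first treat transitions \emph{out of} $\eta$ whose departure and arrival sites both lie in $S_\eta$. The current is $\sigma_{L,N}(\eta)\,\mathrm{rate}(\eta\to\eta^{s_a,s_b})$. By \cref{prop:mu(eta)-transition-correspond}(1) the rate equals that of the corresponding transition $\hat\eta\to\hat\eta^{a,b}$. Since site $i$ of $\hat\eta$ carries $\eta_{s_i}$ particles and rate parameter $x_{s_i}$, we have
\[
\sigma_{L,N}(\eta)=\frac{1}{Z_{L,N}}\prod_{i=1}^K g(x_{s_i},\eta_{s_i})\prod_{\ell\notin S_\eta} g(x_\ell,0)
=\frac{Z_{K,N}}{Z_{L,N}}\prod_{\ell\notin S_\eta} g(x_\ell,0)\,\sigma_{K,N}(\hat\eta).
\]
This gives the claim. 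Here $Z_{K,N}$ is understood as the normalising constant for the $K$-site system with parameters $x_{s_1},\dots,x_{s_K}$, which is the reading the notation $Z_{|S_\eta|,N}$ intends.

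For incoming transitions $\eta'\to\eta$ with both sites in $S_\eta$, the same reasoning applies. The pre-image $\eta'=\eta^{s_b,s_a}$ has the same set of empty sites outside $S_\eta$, because the departure site $s_a$ still contains a particle in $\eta$, and the arrival site $s_b$ of the reverse move is in $S_\eta$. Hence $\widehat{\eta'}$ is exactly the corresponding pre-image of $\hat\eta$, and the same factorisation identity holds with the same prefactor. One minor point needs checking: the only possible issue is a site of $S_\eta$ becoming empty in $\eta'$, which would change the correspondence. This does not happen for the incoming configuration $\eta'$ itself, but note that $\eta'$ might have $s_b$ empty. In that case I use the explicit identification of $\widehat{\,\cdot\,}$-relabelled configurations on the index set $S_\eta$ rather than $S_{\eta'}$: the product over sites in $S_\eta$ still matches $\sigma_{K,N}$ of the $K$-site configuration, with zeros allowed. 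I expect this to be the main point needing care. It is resolved by defining the correspondence via the fixed index set $S_\eta$, as in the proof of \cref{prop:mu(eta)-transition-correspond}, rather than via $S_{\eta'}$.

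Part (2) is identical in structure. Site $L$ is empty in $\eta$, so the weight splits as the product over $S_\eta\cup\{L\}$ times $\prod_{\ell\notin S_\eta\cup\{L\}} g(x_\ell,0)$. The first factor equals $Z_{K+1,N}\,\sigma_{K+1,N}(\ol\eta)$, because the $(K+1)$'th site has $0$ particles and parameter $x_L$. The same holds for the neighbouring configurations reached by moving a particle to or from site $L$, which keep all sites outside $S_\eta\cup\{L\}$ empty. Combining this with the equality of rates from \cref{prop:mu(eta)-transition-correspond}(2) gives the stated factor.
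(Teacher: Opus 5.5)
Your proposal is correct and is essentially the paper's proof: you write $\sigma_{L,N}(\eta)=\frac{Z_{K,N}}{Z_{L,N}}\prod_{\ell\notin S_\eta} g(x_\ell,0)\,\sigma_{K,N}(\hat\eta)$, with $Z_{K,N}$ taken for the parameters $x_{s_1},\dots,x_{s_K}$, and combine it with the equality of rates from \cref{prop:mu(eta)-transition-correspond}. Your treatment of incoming transitions uses the fixed index set $S_\eta$, so the pre-image may have $s_b$ empty; this, together with your part (2), spells out what the paper leaves as ``similarly''.
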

\begin{proof}
    Since the proof of part (2) is similar to that of (1), we will only prove the latter.
    We note that part (1) does not apply if $|S_\eta| = 1$.
	So, we need only consider the case when $|S_\eta| > 1$.
	Suppose that $|S_\eta| = K$ and $S_\eta = \{s_1,s_2,\dots,s_K\}$.
    Then
    \begin{equation*}
        \sigma_{{K,N}}(\hat{\eta}) = \dfrac{1}{Z_{K,N}} \prod_{k=1}^K g(x_{s_k},\eta_{s_k}) = \dfrac{1}{Z_{K,N}} \prod_{\ell \in S_\eta}g(x_\ell,\eta_\ell).
    \end{equation*}
    As $\eta_{\ell} = 0$ for all $\ell \notin S_\eta$,
    \begin{equation*}
        \dfrac{\sigma_{{L,N}}(\eta)}{\sigma_{{K,N}}(\hat{\eta})} = \dfrac{Z_{K,N}}{Z_{L,N}} \prod_{\ell \notin S_\eta} g(x_\ell,0).
    \end{equation*}	
	Consider a transition out of $\eta$, where a particle moves right (resp. left) from site $s_a$ to $s_b$.
	In this case, the rate of the transition is $x_{s_b} \, u(\eta_{s_a},\eta_{s_b})$ (resp $q \, x_{s_b} \, u(\eta_{s_a},\eta_{s_b})$).
	By \cref{prop:mu(eta)-transition-correspond}(1), this corresponds to the transition out of $\hat{\eta}$, where a particle moves from site $a$ to site $b$, where the particle moves right (resp. left), with the same rate of transition.
	The probability current out of $\eta$ due to this transition is
	\begin{equation*}
		x_{s_b} \, u(\eta_{s_a},\eta_{s_b}) \, \sigma_{{L,N}}(\eta)
		= \left(\dfrac{Z_{K,N}}{Z_{L,N}} \prod_{\ell \notin S_\eta} g(x_\ell,0)\right) (x_{s_b} \, u(\eta_{s_a},\eta_{s_b})\sigma_{{K,N}}(\hat{\eta})),
	\end{equation*}
	where $x_{s_b} \, u(\eta_{s_a},\eta_{s_b}) \, \sigma_{{K,N}}(\hat{\eta})$ is the probability current out of $\hat{\eta}$ due to the corresponding transition.
	There is an additional factor of $q$ when the particle moves left.
    In either case, the probability current out of $\eta$ due to a transition where the particle moves to a site in $S_\eta$ is ${(Z_{K,N}/Z_{L,N})\prod_{\ell \notin S_\eta} g(x_\ell,0)}$ times the probability current of the corresponding transition out of $\hat{\eta}$.
	Similarly, the probability current into $\eta$, due to transitions with departure site in $S_\eta$, is $(Z_{K,N}/Z_{L,N})\prod_{\ell \notin S_\eta} g(x_\ell,0)$ times the probability current into $\hat{\eta}$ due to the corresponding transition.
	This proves part (1).
\end{proof}

The following example illustrates \cref{prop:mu(eta)-transition-correspond} and \cref{prop:mu(eta)-transition-correspond-current}.

\begin{eg}
	Suppose that the stationary distribution, $\pi$, corresponding to $u$ factorises in the PALRMP, with one-point function $f$.
	Let $\eta = (0,3,1,0,2,0) \in \Omega_{6,6}$.
	Then, $S_\eta = \{2,3,5\}$, and $\hat{\eta} = (3,1,2) \in \Omega_{3,6}$,  
	where sites $1$, $2$ and $3$ have  rate parameters $x_2$, $x_3$ and $x_5$, respectively.
	Similarly, $\ol{\eta} = (3,1,2,0) \in \Omega_{4,6}$, where site $4$ has rate parameter $x_6$ 
    in addition to the other three.
	
	The stationary probabilities of $\eta$, $\hat{\eta}$ and $\ol{\eta}$ are
	\begin{align*}
		\pi_{{6,6}}(\eta)
        &= \dfrac{1}{Z_{6,6}} f(x_1,0)f(x_2,3)f(x_3,1)f(x_4,0)f(x_5,2)f(x_6,0),\\
		\pi_{{3,6}}(\hat{\eta})
        &= \dfrac{1}{Z_{3,6}}f(x_2,3)f(x_3,1)f(x_5,2),\\
		\pi_{{4,6}}(\ol{\eta})
        &= \dfrac{1}{Z_{4,6}}f(x_2,3)f(x_3,1)f(x_5,2)f(x_6,0).
	\end{align*}
	So,
	\begin{equation*}
		\dfrac{\pi_{{6,6}}(\eta)}{\pi_{{3,6}}(\hat{\eta})} = \dfrac{Z_{3,6}}{Z_{6,6}} f(x_1,0)f(x_4,0)f(x_6,0) = \dfrac{Z_{3,6}}{Z_{6,6}} \prod_{\ell \notin S_\eta} f(x_\ell,0),
	\end{equation*}
	\begin{equation*}
		\dfrac{\pi_{{6,6}}(\eta)}{\pi_{{4,6}}(\ol{\eta})} = \dfrac{Z_{4,6}}{Z_{6,6}} f(x_1,0)f(x_4,0) = \dfrac{Z_{4,6}}{Z_{6,6}} \prod_{\ell \notin (S_\eta \cup \{6\})} f(x_\ell,0).
	\end{equation*}
    
    Recall $\eta^{k,\ell}$ from \eqref{eqn:eta-k,l}.
     Consider the transition from $\eta^{3,2} = (0,4,0,0,2,0)$ to $\eta$, with rate $x_3 \, u(4,0)$, where both the departure and arrival sites are in $S_\eta$.
    By \cref{prop:mu(eta)-transition-correspond}(1), this transition of $\eta$ corresponds to the transition of $\hat{\eta}$ from the configuration $\hat{\eta}^{2,1}=(4,0,2)$, where a particle moves left from site $1$ to site $2$, and the rate of this transition is also $x_3 \, u(4,0)$.
    So, the stationary probability current into $\eta$ due to this transition is
    \begin{align*}
        x_3 \, u(4,0) \pi_{6,6}(\eta^{3,2})
        &= x_3 \, u(4,0) \dfrac{1}{Z_{6,6}}f(x_1,0)f(x_2,4)f(x_3,0)f(x_4,0)f(x_5,2)f(x_6,0)\\
        &= \left(\dfrac{Z_{3,6}}{Z_{6,6}}f(x_1,0)f(x_4,0)f(x_6,0) \right) (x_3 \, u(4,0)\pi_{3,6}(\hat{\eta}^{2,1})),
    \end{align*}
    where $x_3 \, u(4,0)\pi_{3,6}(\hat{\eta}^{2,1})$ is the stationary probability current into $\hat{\eta}$ due to the corresponding transition in $\Omega_{3,6}$, as in \cref{prop:mu(eta)-transition-correspond-current}(1).
    
	Consider the transition from $\eta$ to $\eta^{5,6}$.
	By \cref{prop:mu(eta)-transition-correspond}(2), this transition corresponds to the transition from $\ol{\eta}$ to $\ol{\eta}^{3,4}$, and the rate of both of these transitions is $x_6 \, u(2,0)$.
	So, the stationary probability current out of $\eta$ due to this transition is
	\begin{align*}
		x_6 \, u(2,0) \pi_{{6,6}}(\eta)
		&=x_6 \, u(2,0)\dfrac{1}{Z_{6,6}} f(x_1,0)f(x_2,3)f(x_3,1)f(x_4,0)f(x_5,2)f(x_6,0)\\
		&= \left(\dfrac{Z_{4,6}}{Z_{6,6}}f(x_1,0)f(x_4,0)\right) (x_6 \, u(2,0)\pi_{4,6}(\ol{\eta})),
	\end{align*}
	where $(x_6 \, u(2,0)\pi_{4,6}(\ol{\eta}))$ is the stationary probability current out of $\ol{\eta}$ due to the corresponding transition in $\Omega_{4,6}$, as in \cref{prop:mu(eta)-transition-correspond-current}(2).
\end{eg}

We now introduce conditions for a factorised probability distribution to be the stationary distribution, and show that these conditions are both necessary and sufficient.

\begin{lem}
	\label{lem:me-new}
	Let $g: \R_{> 0}\times \N_0 \to \R$ be a function and $\sigma$ be the factorised probability distribution
	on $\Omega_{L,N}$, for all $L,N \in \N$, defined in terms $g$, as seen in \cref{rem:one fn}.
	Consider the PALRMP with rate $u$.
	If
    \begin{enumerate}[leftmargin = *]
        \item[(a)]
        for all $L,N \in \N$ with $N \geq L$ and for all $\eta \in \Omega_{L,N}$ such that $|S_\eta| = L$, $\eta$ satisfies the master equation for the PALRMP with probability distribution $\sigma$,
        
        \item[(b)]
        for all $L,N \in \N$ with $N \geq L-1$ and for all $\eta \in \Omega_{L,N}$ such that $|S_\eta| = L-1$, the incoming and outgoing probability currents due to transitions involving the empty site are equal,
    \end{enumerate}
    then $\sigma$ is the stationary distribution on $\Omega_{L,N}$, for all $L,N \in \N$, and $g$ is a one-point function corresponding to $u$.
\end{lem}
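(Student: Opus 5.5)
Fix $L,N\in\N$ and $\eta\in\Omega_{L,N}$ with $|S_\eta|=K$, $S_\eta=\{s_1<\dots<s_K\}$. We aim to show that the master equation \eqref{mastereq} holds at $\eta$ for $\sigma$. The idea is to split every transition into or out of $\eta$ into classes. Each class will be identified with the transitions of a smaller configuration, namely $\hat{\eta}$ or a suitable $\ol{\cdot}$ configuration. Hypotheses (a) and (b) will then balance each class separately.

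Every transition into or out of $\eta$ involves at least one site of $S_\eta$. The remaining endpoint is either in $S_\eta$ or is an empty site $\ell\notin S_\eta$. We therefore write
\[
\text{(in-current)}-\text{(out-current)} = \Delta_{S} + \sum_{\ell\notin S_\eta}\Delta_\ell,
\]
where $\Delta_S$ collects the transitions with both endpoints in $S_\eta$, and $\Delta_\ell$ collects the transitions with one endpoint at the empty site $\ell$. This split is legitimate because a single transition cannot involve two empty sites. (If $N=0$ the configuration is unique and there is nothing to prove.)

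For $\Delta_S$ we apply \cref{prop:mu(eta)-transition-correspond}(1) and \cref{prop:mu(eta)-transition-correspond-current}(1). These show that $\Delta_S$ equals a positive constant, $(Z_{K,N}/Z_{L,N})\prod_{\ell\notin S_\eta}g(x_\ell,0)$, times the net current of $\hat\eta\in\Omega_{K,N}$. Every site of $\hat\eta$ is occupied, and $N\ge K$, so hypothesis (a) makes this net current zero. When $K=1$ there are no such transitions, so $\Delta_S=0$ trivially.

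For $\Delta_\ell$ with $\ell\notin S_\eta$, we first reduce to the case $\ell=L$ using translation covariance. Concretely, we rotate so that $\ell$ becomes site $L$, with the rate parameters rotated along with it as in \cref{prop:trans-inv}. This is harmless because the factorised form \eqref{eqn:factorised-prob-dist} and hypotheses (a), (b) are stated for arbitrary parameters $x$, so they are invariant under relabelling. Alternatively, we can redo \cref{prop:mu(eta)-transition-correspond}(2) and \cref{prop:mu(eta)-transition-correspond-current}(2) with $L$ replaced by $\ell$; their proofs never used that the empty site is the last one.

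Next we form the configuration in $\Omega_{K+1,N}$ consisting of the occupied sites together with the empty site $\ell$, in cyclic order and carrying their rate parameters. The transitions of $\eta$ involving site $\ell$ correspond bijectively, with equal rates, to the transitions of this configuration involving its unique empty site. The important point is that a particle can only reach or leave $\ell$ from the nearest occupied sites on either side, and those same sites are the neighbours of $\ell$ in the reduced configuration. The currents scale by the positive constant $(Z_{K+1,N}/Z_{L,N})\prod_{j\notin S_\eta\cup\{\ell\}}g(x_j,0)$. The reduced configuration has exactly $(K+1)-1$ occupied sites and $N\ge K$, so hypothesis (b) gives $\Delta_\ell=0$.

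Summing the classes gives the master equation at every $\eta$. Since the PALRMP is irreducible by \cref{prop:irred-LRMP}, $\sigma$ is the unique stationary distribution on every $\Omega_{L,N}$, and hence $g$ is a one-point function for $u$.

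The main obstacle is the bookkeeping in the $\Delta_\ell$ step. We must check that the bijection with the reduced configuration really preserves rates and direction labels, including the factor $q$ for leftward moves. The delicate case is $K=1$, where a particle can reach $\ell$ either clockwise or anticlockwise from the same site $s_1$. There the reduced configuration has two sites, and its two moves (right and left) must match the two moves of $\eta$ with the correct directions. I would check this case, and also $L=K+1$, explicitly.
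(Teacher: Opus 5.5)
Your proposal is correct and follows essentially the same route as the paper's proof. You split the net current at $\eta$ into the part with both endpoints in $S_\eta$, which \cref{prop:mu(eta)-transition-correspond,prop:mu(eta)-transition-correspond-current} reduce to $\hat{\eta}$ and hypothesis (a) makes zero. For each empty site, rotated to position $L$ via \cref{prop:trans-inv}, the part involving that site reduces to $\ol{\eta}$, and hypothesis (b) makes it zero. The $K=1$ case you flag does work out: on a two-site ring both the clockwise and anticlockwise moves between the two sites are counted (as in \eqref{eqn:palrmp-out-cur}). These match the two moves of $\eta$ into and out of the empty site.
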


\begin{lem}
\label{lem:me-factorised-stationary}
	Suppose that the stationary distribution corresponding to $u$ is factorised in the PALRMP.
    \begin{enumerate}[leftmargin = *]
        \item[(1)]
        Consider a configuration $\eta \in \Omega_{L,N}$.
	    Then, both the stationary probability currents into and out of $\eta$ due to transitions where both the arrival and departure sites are in $S_\eta$ are equal.

        \item[(2)]
        Assume $N \geq L-1$ and consider a configuration $\eta \in \Omega_{L,N}$, where only one site is empty, say site $\ell$.
        Then, the stationary probability current into $\eta$ due to transitions where particles move from site $\ell$ is equal to the stationary probability current out of $\eta$ due to transitions where particles move to site $\ell$.

        \item[(3)]
        Consider a configuration $\eta \in \Omega_{L,N}$ and a site $\ell$ that is not in $S_\eta$.
        Then, the stationary probability current into $\eta$ due to transitions where particles move from site $\ell$ is equal to the stationary probability current out of $\eta$ due to transitions where particles move to site $\ell$.
    \end{enumerate}	
\end{lem}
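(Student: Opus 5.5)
The idea is to exploit that the stationary distribution $\pi$ is factorised for \emph{every} $L$, $N$ and every choice of rate parameters. This lets us transfer the master equation of $\hat\eta$ and $\ol\eta$ back to $\eta$ through \cref{prop:mu(eta)-transition-correspond-current}.

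\emph{Part (1).} Let $K=|S_\eta|$. If $K=1$ there are no such transitions and the claim is vacuous, so assume $K\ge 2$. The configuration $\hat\eta\in\Omega_{K,N}$, with rate parameters $x_{s_1},\dots,x_{s_K}$, has no empty sites. Hence every transition into or out of $\hat\eta$ has both endpoints occupied, and the PALRMP on these $K$ sites has stationary distribution $\pi_{K,N}$, factorised with the same one-point function $f$ by \cref{cor:one fn}. The master equation for $\hat\eta$ therefore says that the total incoming current equals the total outgoing current over \emph{all} transitions of $\hat\eta$. By \cref{prop:mu(eta)-transition-correspond}(1) these transitions are in bijection with the transitions of $\eta$ whose endpoints both lie in $S_\eta$. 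By \cref{prop:mu(eta)-transition-correspond-current}(1), each current for $\eta$ is the corresponding current for $\hat\eta$ times the common factor $(Z_{K,N}/Z_{L,N})\prod_{\ell\notin S_\eta}f(x_\ell,0)$. Multiplying the balance for $\hat\eta$ by this factor gives (1).

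\emph{Part (2).} Write the master equation for $\eta$ in $\Omega_{L,N}$, where only site $\ell$ is empty. Every transition of $\eta$ either has both endpoints in $S_\eta$, or involves site $\ell$ as the departure site of an incoming transition or the arrival site of an outgoing one. No transition can have both endpoints outside $S_\eta$, since one endpoint is always occupied in $\eta$. By part (1), the currents of the first kind already balance. Subtracting them from the master equation leaves exactly the equality of the incoming current from $\ell$ and the outgoing current to $\ell$.

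\emph{Part (3).} This is the main step. Here $\eta$ may have several empty sites, and the master equation of $\eta$ alone does not separate the contribution of a single empty site. By the translation covariance of \cref{prop:trans-inv}, we may relabel so that $\ell=L$, with the rate parameters shifted accordingly; this is allowed because the factorisation holds for arbitrary parameters with the same $f$. Then form $\ol\eta\in\Omega_{K+1,N}$, which has exactly one empty site, site $K+1$, with rate parameter $x_L$. Also $N\ge K=(K+1)-1$. Part (2) applied to $\ol\eta$ gives the balance between the currents into $\ol\eta$ from site $K+1$ and out of $\ol\eta$ to site $K+1$.

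Now \cref{prop:mu(eta)-transition-correspond}(2) puts these transitions in rate-preserving bijection with the transitions of $\eta$ involving site $L$. Because site $L$ is empty, every such transition has $L$ as the departure site (incoming) or the arrival site (outgoing). \cref{prop:mu(eta)-transition-correspond-current}(2) shows that each current for $\eta$ is a common positive multiple of the corresponding current for $\ol\eta$, so the balance transfers to $\eta$.

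The delicate point is checking that the bijection in \cref{prop:mu(eta)-transition-correspond}(2) preserves both the rates and the direction (left or right) of each move. This requires that, for both $\eta$ and $\ol\eta$, the neighbours of the chosen empty site in the occupied set are the same sites, namely $s_K$ and $s_1$. That holds precisely because site $L$, like site $K+1$ in $\ol\eta$, lies between $s_K$ and $s_1$ cyclically.
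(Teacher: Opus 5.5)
Your proposal is correct and takes essentially the same route as the paper. Part (1) transfers the master equation of $\hat\eta$ to $\eta$ via \cref{prop:mu(eta)-transition-correspond-current}(1); part (2) subtracts that balance from the master equation of $\eta$; part (3) relabels the empty site as $L$ and transfers part (2), applied to $\ol\eta$, back via \cref{prop:mu(eta)-transition-correspond-current}(2). Your explicit checks of the case $K=1$, of $N\ge K$ for $\ol\eta$, and of direction preservation in the bijection fill in details the paper leaves implicit.
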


\cref{lem:me-new,lem:me-factorised-stationary} thus imply that a factorised probability distribution $\sigma$ is stationary on $\Omega_{L,N}$, for all $L,N \in \N$ if and only if it satisfies conditions (a) and (b) of \cref{lem:me-new}.

\begin{proof}[Proof of \cref{lem:me-new}]
    Consider a configuration $\eta \in \Omega_{L,N}$, for some $L,N \in \N$.
    We show that $\eta$ satisfies the master equation.
    Suppose that $|S_\eta| = K$ and $S_\eta = \{s_1, s_2 , \dots , s_K\}$.    
    Any transition out of (resp. into) $\eta$ has departure (resp. arrival) site in $S_\eta$.
    So, for transitions out of (resp. into) $\eta$, either the arrival (resp. departure) site is in $S_\eta$, in which case both the arrival and departure sites of the transition are in $S_\eta$, or the arrival (resp. departure) site is not in $S_\eta$.
    
    Now, using \cref{prop:mu(eta)-transition-correspond-current}(1), summing over all the outgoing (rep. incoming) transitions from (resp. to) $\eta$ and $\hat{\eta}$, the probability current out of (resp. into) $\eta$ due to transitions with arrival (resp. departure) site in $S_\eta$, is $(Z_{K,N}/Z_{L,N})\prod_{\ell \notin S_\eta} f(x_\ell,0)$ times the probability current out of (resp. into) $\hat{\eta}$.
    As all sites of $\hat{\eta}$ contain particles, $\hat{\eta}$ satisfies the master equation \eqref{mastereq}, by condition (a).
    So, the probability current into $\eta$ due to transitions where both the departure and arrival sites are in $S_\eta$ is equal to the probability current out of $\eta$ due to transitions where both the departure and arrival sites are in $S_\eta$.
    
    Consider an empty site $\ell$ of $\eta$, which we consider to be site $L$ using \cref{prop:trans-inv}.
    In $\ol{\eta}$, only site $(K+1)$ is empty, and so,  by condition (b), the incoming and outgoing probability currents involving site $(K+1)$, in $\ol\eta$, are equal.
    By \cref{prop:mu(eta)-transition-correspond-current}(2), the probability current of a transition in $\eta$ involving site $L$ is $(Z_{K,N}/Z_{L,N})\prod_{\ell \notin (S_\eta \cup \{L\})} f(x_\ell,0)$ times the probability of the corresponding transition of $\ol{\eta}$.    
    So, the probability current into $\eta$ due to transitions where departure site is site $\ell$ is equal to the probability current out of $\eta$ due to transitions where arrival site is site $\ell$.  
    Then, summing over all empty sites, the probability current due to transitions out of $\eta$ where the arrival site is not in $S_\eta$ is equal to the probability current due to transitions into $\eta$ where the departure site is not in $S_\eta$. 

    Thus, the incoming probability current is equal to the outgoing probability current for $\eta$, that is $\eta$ satisfies the master equation.
    Hence, $\sigma$ is the stationary distribution of the PALRMP corresponding to $u$, and so, $g$ is a one-point function corresponding to $u$.
\end{proof}

\begin{proof}[Proof of \cref{lem:me-factorised-stationary}]
    Suppose that $\pi$ is the stationary probability distribution on $\Omega_{L,N}$, for all $L,N \in \N$, with one-point function $f$.
    \begin{enumerate}[leftmargin = *]
        \item[(1)]
        Considering the correspondence in the transitions of $\eta$ and $\hat{\eta}$, and using \cref{prop:mu(eta)-transition-correspond-current}(1) and arguments as in \cref{lem:me-new}, we can prove part (1).

        \item[(2)]
        Considering a cyclic relabelling of the sites, using \cref{prop:trans-inv}, suppose that site $L$ is empty, and all the other sites contain at least one particle in $\eta$.
        So, $S_\eta = [L-1]$ and transitions where both the arrival and departure sites are in $S_\eta$ are the transitions that do not involve site $L$.
        We note that any transition into or out of $\eta$ involves at least one site of $S_\eta$.
        So, the transitions into (resp. out of) $\eta$ involving site $L$ are the transitions where the particle moves from (resp. to) site $L$.
        By the master equation, the stationary probability current into $\eta$ is equal to the stationary probability current out of $\eta$.
        But, by part (1), the stationary probability current into $\eta$ due to transitions where a particle moves that was initially not at site $L$ is equal to the stationary probability current out of $\eta$ due to transitions where a particle moves to a site other than site $L$. 
        This proves part (2).

        \item[(3)]
        Considering the empty site to be site $L$, by \cref{prop:trans-inv}, and considering the stationary probability current due to corresponding transitions of $\eta$ and $\ol{\eta}$ from \cref{prop:mu(eta)-transition-correspond-current}(2), part (3) can be shown.
    \end{enumerate}
\end{proof}

For $N \geq L$, consider $\eta \in \Omega_{L,N}$ where all the sites contain at least one particle.
So, for any transition into or out of $\eta$, particles move between neighbouring sites.
Let $g: \R_{> 0}\times \N_0 \to \R$, and consider the factorised probability distribution $\sigma$, defined as in \cref{rem:one fn}, 
on the PALRMP, with rate $u$.
The outgoing probability current from $\eta$ corresponding to $\sigma$ is
\begin{equation}
	\label{eqn:palrmp-out-cur}
	\sigma(\eta) \sum_{\ell = 1}^L (x_\ell \, u(\eta_{\ell-1},\eta_\ell)+q \, x_\ell \, u(\eta_{\ell+1},\eta_\ell)).
\end{equation}
For any $1\leq \ell \leq L$, if a particle moves to site $\ell$ due to a transition, it was initially either at site $\ell-1$ or at site $\ell+1$.
In these cases, the transitions would have been from $\eta^{\ell,\ell-1}$ and $\eta^{\ell,\ell+1}$, respectively.
So, the incoming probability current to $\eta$ corresponding to $\sigma$ is
\begin{equation*}
    \begin{split}
        \sum_{\ell=1}^L&\bigg( \sigma( \eta^{\ell,\ell-1}) x_\ell \, u(\eta_{\ell-1}+1,\eta_\ell-1)+\sigma(\eta^{\ell,\ell+1})q\, x_\ell \, u(\eta_{\ell+1}+1,\eta_\ell-1)\bigg).
    \end{split}
\end{equation*}
We note that
\begin{equation*}
    \begin{split}
        \sigma(\eta^{\ell,\ell-1}) &= \dfrac{g(x_{\ell-1},\eta_{\ell-1}+1)}{g(x_{\ell-1},\eta_{\ell-1})}\dfrac{g(x_\ell,\eta_\ell-1)}{g(x_\ell,\eta_\ell)} \sigma(\eta),\\
        \sigma(\eta^{\ell,\ell+1}) &= \dfrac{g(x_\ell,\eta_\ell-1)}{g(x_\ell,\eta_\ell)} \dfrac{g(x_{\ell+1},\eta_{\ell+1}+1)}{g(x_{\ell+1},\eta_{\ell+1})}\sigma(\eta).
    \end{split}
\end{equation*}
Thus, the incoming probability current to $\eta$ corresponding to $\sigma$ is
\begin{equation}
	\label{eqn:palrmp-in-cur}
	\begin{split}
		\sigma(\eta)\sum_{\ell=1}^L \bigg(&\dfrac{g(x_{\ell-1},\eta_{\ell-1}+1)}{g(x_{\ell-1},\eta_{\ell-1})}\dfrac{g(x_\ell,\eta_\ell-1)}{g(x_\ell,\eta_\ell)} x_\ell \, u(\eta_{\ell-1}+1,\eta_\ell-1) \\
		&+\dfrac{g(x_\ell,\eta_\ell-1)}{g(x_\ell,\eta_\ell)} \dfrac{g(x_{\ell+1},\eta_{\ell+1}+1)}{g(x_{\ell+1},\eta_{\ell+1})} q\, x_\ell \, u(\eta_{\ell+1}+1,\eta_\ell-1) \bigg).
	\end{split}
\end{equation}

In the case when $\sigma$ is the stationary distribution on the PALRMP, \eqref{eqn:palrmp-out-cur} and \eqref{eqn:palrmp-in-cur} give the stationary outgoing and incoming probability currents for $\eta$, respectively.

\section{Totally asymmetric long range misanthrope process}
\label{sec:talrmp}

In this system, particles move only to the right (i.e. clockwise).
We recall that for $\eta \in \Omega_{L,N}$ and site $\ell \in [L]$, there is a unique transition into $\eta$ where a particle moves from site $\ell$, and a unique transition into $\eta$ where a particle moves to site $\ell$.

\begin{lem}
	\label{lem:talrmp-form-f}
	Suppose that the stationary distribution
    corresponding to $u$ is factorised in the TALRMP.
	Then, $u(m,0)$ is a constant function for $m \in \N$, and there exists a one-point function, $f$, corresponding to $u$ satisfying
	\begin{equation}
		\label{eqn:talrmp-factorised-weight-intermediate}
		f(x,n) = x^n f(x,0) \prod_{i=1}^n u(1,i-1).
	\end{equation}
\end{lem}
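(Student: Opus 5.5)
The plan is to read off the one-point function from the simplest instances of \cref{lem:me-factorised-stationary}(2), namely configurations with exactly one empty site. In the TALRMP each such balance relation has a single term on each side. Throughout, $f$ is a one-point function (with $f_\ell(m)=f(x_\ell,m)$ by \cref{cor:one fn}), and the sites carry arbitrary positive parameters $x_\ell$.

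\textbf{Step 1 (two sites).} Take $L=2$ and $\eta=(m,0)\in\Omega_{2,m}$ with $m\geq 1$, so that site $2$ is the empty site.
\begin{itemize}
\item The only outgoing transition into site $2$ moves a particle from site $1$ to site $2$. Its current is $\pi(\eta)\,x_2\,u(m,0)$.
\item The only incoming transition from site $2$ comes from $(m-1,1)$: the particle at site $2$ moves right to site $1$. Its rate is $x_1\,u(1,m-1)$.
\end{itemize}
Clearing the normalisation, \cref{lem:me-factorised-stationary}(2) gives
\begin{equation*}
f(x_1,m)f(x_2,0)\,x_2\,u(m,0)=f(x_1,m-1)f(x_2,1)\,x_1\,u(1,m-1).
\end{equation*}
Putting $m=1$ and separating variables shows that $f(x,1)/(x f(x,0))$ equals a constant $C>0$ independent of $x$.

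\textbf{Step 2 (three sites).} This step removes the dependence on the occupation of the departure site. Take $L=3$ and $\eta=(m,n,0)$ with $m,n\geq1$, so $N=m+n\geq 2=L-1$.
\begin{itemize}
\item The unique outgoing transition to site $3$ is from site $2$. Its rate is $x_3\,u(n,0)$.
\item The unique incoming transition from site $3$ goes right to site $1$, starting from $(m-1,n,1)$. Its rate is $x_1\,u(1,m-1)$.
\end{itemize}
The factor $f(x_2,n)$ cancels, and using $f(x_3,1)=Cx_3f(x_3,0)$ we obtain
\begin{equation*}
\frac{f(x_1,m)}{f(x_1,m-1)}=\frac{C\,x_1\,u(1,m-1)}{u(n,0)}\qquad\text{for all } m,n\geq 1 .
\end{equation*}
The left side does not depend on $n$. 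Fixing $m=1$, this forces $u(n,0)=u(1,0)=:c$ for every $n\in\N$, which is the first claim.

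\textbf{Step 3 (iterate and rescale).} With $u(\cdot,0)\equiv c$, iterating the ratio gives
\begin{equation*}
f(x,n)=(C/c)^n\,x^n f(x,0)\prod_{i=1}^n u(1,i-1).
\end{equation*}
By \cref{prop:weight-func-rescale} with $a=1$ and $b=c/C$, the function $x^n f(x,0)\prod_{i=1}^n u(1,i-1)$ is also a one-point function, which is \eqref{eqn:talrmp-factorised-weight-intermediate}.

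The main point needing care is Step 2. One must check that the incoming and outgoing transitions involving the empty site are exactly the single ones listed. This follows from the uniqueness remark at the start of \cref{sec:talrmp}: the departure site into an empty site is the nearest occupied site to its left, and a particle leaving it lands on the nearest occupied site to its right. One must also check that the hypothesis $N\geq L-1$ of \cref{lem:me-factorised-stationary}(2) holds in both steps, which it does since $N=m\geq1=L-1$ in Step 1 and $N=m+n\geq2=L-1$ in Step 2.
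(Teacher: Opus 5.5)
Your proof is correct and follows essentially the paper's route. You apply \cref{lem:me-factorised-stationary}(2) to single-empty-site configurations, where the TALRMP has exactly one incoming and one outgoing transition involving the empty site; equating these currents and separating variables gives $u(\cdot,0)$ constant and the recursion for $f$, and you finish by rescaling via \cref{prop:weight-func-rescale}. The only difference is cosmetic: the paper gets both the $x$-independence of $f(x,1)/(xf(x,0))$ and the constancy of $u(m,0)$ from one generic configuration $(m,0,n,\eta_4,\dots,\eta_L)$, whereas you split this into an $L=2$ case and an $L=3$ case.
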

\begin{proof}
	Let $\tf$ be a one-point function corresponding to $u$, and $\pi$ the stationary distribution.
    
    We obtain a condition on $\tf$ by utilising \cref{lem:me-factorised-stationary}(2), and then utilise \cref{prop:weight-func-rescale} to obtain a one-point function satisfying \eqref{eqn:talrmp-factorised-weight-intermediate}.
    The form of the one-point function obtained in this manner will also satisfy condition (b) of \cref{lem:me-new}.
	
	For $N \geq L-1$, consider a configuration $\eta \in \Omega_{L,N}$ where there is a single empty site.
	Using \cref{prop:trans-inv}, suppose that the empty site is site $2$, and $\eta = (m,0,n,\eta_4,\dots,\eta_L)$, where the first three sites have rate parameters $x,y$ and $z$, respectively.	
	Then, the transition into $\eta$ involving site $2$ is from the configuration $\eta^{3,2} = (m,1,n-1,\eta_4,\dots,\eta_L)$, and the transition out of $\eta$ involving site $2$ is to the configuration $\eta^{1,2} = (m-1,1,n,\eta_4,\dots, \eta_L)$.
    By \cref{lem:me-factorised-stationary}(2), the stationary probability currents due to these transitions are equal.
    
	The stationary outgoing probability current due to transitions involving site $2$ is
	\begin{equation*}
		\pi(\eta)y \, u(m,0),
	\end{equation*}
	and the stationary incoming probability current due to transitions involving site $2$ is
	\begin{equation*}
		\pi(\eta^{3,2})z \, u(1,n-1) = \pi(\eta) \dfrac{\tf(y,1)}{\tf(y,0)} \dfrac{\tf(z,n-1)}{\tf(z,n)}z \, u(1,n-1).
	\end{equation*}
    We define $\tilh : \R_{> 0}\times \N \to \R$, such that
    \begin{equation}
        \label{eqn:inhom-define-tg}
        \tilh(w,k) = \dfrac{\tf(w,k)}{w\tf(w,k-1)}.
    \end{equation}
	Then, by \cref{lem:me-factorised-stationary}(2), equating the stationary incoming and outgoing probability currents due to transitions involving site $2$, for all $n,m \in \N$ and $x,y,z \in \R_{> 0}$,
	\begin{equation*}
		\tilh(z,n)u(m,0) = \tilh(y,1)u(1,n-1).
	\end{equation*}
	In the above equation, $y$ (resp. $z$) occurs only on the right (resp. left) hand side.
	So, $\tilh(y,1)$ does not depend on $y$ and $\tilh(z,n)$ does not depend on $z$.
	Thus, we can consider $h: \N \to \R$, such that for all $w\in \R_{> 0}$ and $k \in \N$,
    \begin{equation}
        \label{eqn:inhom-define-g}
        h(k) = \tilh(w,k).
    \end{equation}
	So, for all $n,m \in \N$,
	\begin{equation}
		\label{eqn:talrmp-me-one-empty}
		u(m,0)= \dfrac{h(1)}{h(n)} u(1,n-1).
	\end{equation}
	As $m$ occurs only on the left hand side of the above equation, $u(m,0)$ is a constant function for $m \in \N$.
    This proves the first part.
	
	Setting $m=1$ to \eqref{eqn:talrmp-me-one-empty} and expanding $h(n)$ as in \eqref{eqn:inhom-define-tg}, we have the recursive relation
	\begin{equation}
		\label{eqn:talrmp-tf-recur}
		\tf(z,n) = z\tf(z,n-1) \dfrac{h(1)}{u(1,0)} u(1,n-1).
	\end{equation}
	Solving the recursive relation,
	\begin{equation}		
		\label{eqn:talrmp-tf-recur-final}
		\tf(z,n) = z^n \left( \dfrac{h(1)}{u(1,0)}\right)^n \tf(z,0) \prod_{i=1}^n u(1,i-1).
	\end{equation}
	Hence, by \cref{prop:weight-func-rescale}, $f(x,n) = (u(1,0)/h(1))^n\tf(x,n)$ is a one-point function corresponding to $u$ satisfying \eqref{eqn:talrmp-factorised-weight-intermediate}.
\end{proof}

We note that in the last paragraph of the proof, we cannot also divide by $\tf(x,0)$ to get \eqref{eqn:palrmp-factorised-weight}, as the rate parameters may be different for different sites.

\begin{lem}
    \label{lem:talrmp-factorised-converse}
    If there exists $\phi:\N_0\to\R_{> 0}$, such that the rate $u$ satisfies \eqref{eqn:palrmp-factorised-cond}, then the stationary distribution corresponding to $u$ is factorised, with one-point function $f$ given by \eqref{eqn:palrmp-factorised-weight}.
\end{lem}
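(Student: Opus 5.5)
We use \cref{lem:me-new} with $g = f$ from \eqref{eqn:palrmp-factorised-weight} and $q=0$. It then suffices to verify two conditions. Condition (a) is the master equation for configurations with no empty site. Condition (b) is the balance of currents through the empty site when exactly one site is empty.

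For (a), let $\eta \in \Omega_{L,N}$ with all $\eta_\ell \geq 1$. Since $f(x,n)/f(x,n-1) = x\,\phi(n-1)$, the incoming current \eqref{eqn:palrmp-in-cur} with $q=0$ has summand
\begin{equation*}
\frac{f(x_{\ell-1},\eta_{\ell-1}+1)}{f(x_{\ell-1},\eta_{\ell-1})}\frac{f(x_\ell,\eta_\ell-1)}{f(x_\ell,\eta_\ell)}\, x_\ell\, u(\eta_{\ell-1}+1,\eta_\ell-1)
= \frac{x_{\ell-1}\phi(\eta_{\ell-1})}{x_\ell\phi(\eta_\ell-1)}\, x_\ell\, \phi(\eta_\ell-1) = x_{\ell-1}\phi(\eta_{\ell-1}),
\end{equation*}
where we used $u(\eta_{\ell-1}+1,\eta_\ell-1)=\phi(\eta_\ell-1)$. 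By \eqref{eqn:palrmp-out-cur}, the outgoing summand is $x_\ell\,u(\eta_{\ell-1},\eta_\ell) = x_\ell\phi(\eta_\ell)$, because $\eta_{\ell-1}\ge1$. Summing over $\ell$ cyclically, both sums equal $\sum_\ell x_\ell\phi(\eta_\ell)$. So the master equation holds, and in fact pairwise balance holds term by term after an index shift.

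For (b), we use \cref{prop:trans-inv} to place the unique empty site at site $2$. Write $\eta=(m,0,n,\dots)$ with $m,n\ge1$, and let the rate parameters of sites $1,2,3$ be $x,y,z$. The only transition out of $\eta$ involving site $2$ goes $1\to 2$, with current $\pi(\eta)\,y\,u(m,0) = \pi(\eta)\,y\,\phi(0)$. The only transition into $\eta$ involving site $2$ comes from $\eta^{3,2}$, where a particle moves $2\to3$. Its current is
\begin{equation*}
\pi(\eta)\frac{f(y,1)}{f(y,0)}\frac{f(z,n-1)}{f(z,n)}\, z\, u(1,n-1) = \pi(\eta)\, y\phi(0)\,\frac{1}{z\phi(n-1)}\, z\,\phi(n-1) = \pi(\eta)\,y\,\phi(0).
\end{equation*}
The two currents agree. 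The degenerate case $L=1$ has no transitions. For $L=2$, the single occupied site is both neighbours of the empty site, and the same computation applies.

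The only potential obstacle is bookkeeping in the small cases $L=1,2$, where left and right neighbours coincide. Since $q=0$, however, each site still has a unique incoming and a unique outgoing transition, so the computations above go through unchanged. By \cref{lem:me-new}, $f$ is then a one-point function, and the stationary distribution is factorised.
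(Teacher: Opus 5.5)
Your proposal is correct and follows essentially the same route as the paper. You invoke \cref{lem:me-new}, verify condition (a) by the same term-by-term pairwise balance (each site contributes $x_\ell\phi(\eta_\ell)$ to both currents), and verify condition (b) by the same computation for $(m,0,n,\dots)$ with the empty site at site $2$. The differences are cosmetic: in (b) you write both currents as $y\,\phi(0)$ times the weight of the configuration, whereas the paper writes them as $(Z_{L,N+1}/Z_{L,N})\,\sigma(\xi^2)$. Also, you should write $\sigma$ rather than $\pi$ for the candidate distribution, since its stationarity is what is being proved.
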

\begin{proof}
    Suppose that $\sigma$ is the factorised probability distribution corresponding to $f$.
    To prove that $\sigma$ is the stationary distribution, we need to show that $\sigma$ satisfies conditions (a) and (b) of \cref{lem:me-new}.

    For $N \geq L$, consider $\eta \in \Omega_{L,N}$, where all sites contain at least one particle.
    The probability current out of $\eta$, due to the transition where a particle moves to site $\ell$ from site $\ell -1$, is
    \begin{equation*}
        \sigma(\eta)x_\ell \, u(\eta_{\ell-1},\eta_\ell) = \sigma(\eta)x_\ell \, \phi(\eta_\ell).
    \end{equation*}
    The probability current into $\eta$, due to the transition where a particle moves from site $\ell$ to site $\ell+1$, expanding $f$ according to \eqref{eqn:palrmp-factorised-weight}, is
    \begin{equation*}
        \sigma(\eta)\dfrac{f(x_{\ell},\eta_{\ell}+1)}{f(x_{\ell},\eta_{\ell})}\dfrac{f(x_{\ell+1},\eta_{\ell+1} - 1)}{f(x_{\ell+1},\eta_{\ell+1})}x_{\ell+1} \,  u(\eta_{\ell}+1,\eta_{\ell+1}-1) = \sigma(\eta)x_\ell \, \phi(\eta_\ell).
    \end{equation*}
    So, the probability currents into and out of $\eta$ due to the transitions involving site $\ell$ are equal.
    Thus, we have pairwise balance as in \eqref{pairwise-balance} in this case, and so, $\eta$ satisfies the master equation.
    So, configurations where all sites contain at least one particle satisfy the master equation and $\sigma$ satisfies condition (a) of \cref{lem:me-new}.

    For $N \geq L-1$, consider a configuration $\xi \in \Omega_{L,N}$ with a single site that does not contain any particles.
    We focus on the case when $L \geq 3$.
    By considering a cyclic relabelling of the sites, using \cref{prop:trans-inv}, say that site $2$ is empty, and sites $1$ and $3$ contain $m$ particles and $n$ particles respectively.
    Thus, we can denote $\xi = (m,0,n,\xi_4,\dots,\xi_L)$.
    Suppose that the rate parameters of the first three sites are $x$ , $y$ and $z$, respectively.
    In any transition out of $\xi$ involving site $2$, a particle moves to site $2$.
    So, the only possible transition from $\xi$ involving site $2$ is the transition to $\xi^{1,2}$.
    So, the outgoing probability current due to transitions involving site $2$ is
    \begin{equation*}
        \sigma(\xi)y \, u(m,0) = y \, \phi(0) \dfrac{1}{Z_{L,N}}f(x,m)f(y,0)f(z,n) \prod_{\ell =4}^L f(x_\ell,\xi_\ell).
    \end{equation*}
    Now, $f(y,1) = y \, f(y,0)\phi(0)$, by \eqref{eqn:palrmp-factorised-weight}.
    So, from \eqref{eqn:eta-l}, the outgoing probability current due to transitions involving site $2$ is
    \begin{equation*}
        \dfrac{1}{Z_{L,N}}f(x,m)f(y,1)f(z,n) \prod_{\ell =4}^L f(x_\ell,\xi_\ell)= \dfrac{Z_{L,N+1}}{Z_{L,N}} \sigma ((m,1,n,\xi_4,\dots,\xi_L)) = \dfrac{Z_{L,N+1}}{Z_{L,N}} \sigma(\xi^2).
    \end{equation*}    
    In any transition into $\xi$ involving site $2$, a particle moves from site $2$.
    Then, this transition is from the configuration $\xi^{3,2}$.
    Now, $f(z,n) = z\phi(n-1)f(z,n-1)$, by \eqref{eqn:palrmp-factorised-weight}.
    So, from \eqref{eqn:eta-l}, the incoming probability current due to transitions involving site $2$ is
    \begin{equation*}
        \sigma(\xi^{3,2})zu(1,n-1) = z\phi(n-1)\frac{1}{Z_{L,N}}f(x,m)f(y,1)f(z,n-1)\prod_{\ell =4}^L f(x_\ell,\xi_\ell) = \dfrac{Z_{L,N+1}}{Z_{L,N}} \sigma(\xi^2).
    \end{equation*}
    So, the probability currents into and out $\eta$ due to the transitions involving site $2$ are equal.
    When $L = 2$, we can show that the probability currents are equal in a similar manner.
    Thus, $\sigma$ satisfies condition (b) of \cref{lem:me-new}.
    Hence, by \cref{lem:me-new} the stationary distribution corresponding to $u$ is factorised, with one-point function $f$ given by \eqref{eqn:palrmp-factorised-weight}.
\end{proof}

Note that in the proof of \cref{lem:talrmp-factorised-converse}, pairwise balance \eqref{pairwise-balance} holds for configurations where all sites contain particles and transitions involving empty sites.
It can shown that for any $\eta \in \Omega_{L,N}$ and site $\ell$, the stationary incoming current due to the unique transition where a particle moves from site $\ell$ is equal to the stationary outgoing current due to the unique transition where a particle moves to site $\ell$.

\begin{proof}[Proof of \cref{thm:palrmp-factorised} for TALRMP]
	We first consider the case when the stationary distribution corresponding to $u$, is factorised in the TALRMP.
	Let $\pi$ be the stationary distribution and $f$ be a one-point function corresponding to $u$ satisfying \eqref{eqn:talrmp-factorised-weight-intermediate}.
    We obtain conditions on $u$, by utilising the form of $f$ in \eqref{eqn:talrmp-factorised-weight-intermediate} and considering the master equation for configurations in which all the sites contain particles.
	
	Consider $N\geq L$ and a configuration $\eta \in \Omega_{L,N}$, where all the sites contain at least one particle.
	Then, the stationary outgoing current from $\eta$ is given by \eqref{eqn:palrmp-out-cur},
	\begin{equation*}
		\pi(\eta) \sum_{\ell=1}^L x_\ell \, u(\eta_{\ell-1},\eta_\ell).
	\end{equation*} 
    By \eqref{eqn:palrmp-in-cur}, as $q = 0$,  expanding $f$ according to \eqref{eqn:talrmp-factorised-weight-intermediate}, the stationary incoming probability current is
    \begin{equation*}
        \pi(\eta) \sum_{\ell=1}^L x_{\ell-1} \, u(1,\eta_{\ell-1}) \dfrac{1}{x_\ell \, u(1,\eta_\ell-1)} x_\ell \, u(\eta_{\ell-1}+1,\eta_\ell-1).
    \end{equation*}
    Rearranging the terms, the stationary incoming probability current is
    \begin{equation*}
        \pi(\eta) \sum_{\ell=1}^L x_{\ell}  \dfrac{u(1,\eta_{\ell})}{u(1,\eta_{\ell+1}-1)} u(\eta_{\ell}+1,\eta_{\ell+1}-1).
    \end{equation*}
    
	The stationary incoming and outgoing probability currents are polynomials in the $x_\ell$'s, and are equal by the master equation.
	So, for all $1\leq \ell \leq L$, the coefficients of $x_{\ell}$ are equal, that is,
	\begin{equation*}
		u(\eta_{\ell-1},\eta_{\ell}) =  \dfrac{u(1,\eta_{\ell})}{u(1,\eta_{\ell+1}-1)} u(\eta_{\ell}+1,\eta_{\ell+1}-1).
	\end{equation*}
	Thus, for all $m,n,k \in \N$,
	\begin{equation}
		\label{eqn:talrmp-me-no-empty}
		u(m,n) = \dfrac{u(1,n)}{u(1,k-1)}u(n+1,k-1).
	\end{equation}
	As the right hand side of \eqref{eqn:talrmp-me-no-empty} is not dependent on $m$, neither is the left hand side.
    So, $u$ is not dependent on the first argument.	
	Thus, there exists a function $\phi:\N_0\to \R_{> 0}$, such that $u(m,n) = \phi(n)$ for all $n \in \N_0$ and $m \in \N$, and $u(0,n) = 0$ for all $n \in \N_0$.
	
	The converse is shown by \cref{lem:talrmp-factorised-converse}.
\end{proof}

\section{Partially asymmetric long range misanthrope process}
\label{sec:palrmp}

\begin{lem}
	\label{lem:alrmp-iff-talrmp}
	The stationary distribution corresponding to $u$ is factorised in the TALRMP if and only if the stationary distribution corresponding to $u$ is factorised in the PALRMP for $q \neq 1$.
\end{lem}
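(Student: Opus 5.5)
Both directions will be proved by checking the conditions of \cref{lem:me-new} and \cref{lem:me-factorised-stationary}, splitting the master equation into a clockwise part and a counterclockwise part.

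\emph{From TALRMP to PALRMP.} Assume the TALRMP stationary distribution is factorised. By \cref{thm:palrmp-factorised} for the TALRMP, already proved, $u(m,n)=\phi(n)$ for $m\ge1$. Take $f$ as in \eqref{eqn:palrmp-factorised-weight} and let $\sigma$ be the corresponding factorised distribution. The right-moving transitions of the PALRMP are exactly those of the TALRMP, and by \cref{lem:talrmp-factorised-converse} (and the remark after it) they already balance in conditions (a) and (b) of \cref{lem:me-new}. It therefore suffices to show that the left-moving currents, each carrying a factor $q$, also balance.

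For (a), take a configuration with no empty site. The outgoing left current into site $\ell$ is $\sigma(\eta)\,q\,x_\ell\,\phi(\eta_\ell)$. The incoming left current from $\eta^{\ell,\ell+1}$, with a particle moving from $\ell$ to $\ell-1$, is
\[
\sigma(\eta)\frac{f(x_\ell,\eta_\ell+1)}{f(x_\ell,\eta_\ell)}\frac{f(x_{\ell-1},\eta_{\ell-1}-1)}{f(x_{\ell-1},\eta_{\ell-1})}\,q\,x_{\ell-1}\,\phi(\eta_{\ell-1}-1)=\sigma(\eta)\,q\,x_\ell\,\phi(\eta_\ell).
\]
So pairwise balance holds, exactly as in the right-moving case. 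Condition (b) follows from the same computation as in \cref{lem:talrmp-factorised-converse}, with the roles of sites $1$ and $3$ exchanged.

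\emph{From PALRMP to TALRMP.} Assume the PALRMP is factorised with $q\ne1$. The idea is to separate the orders in $q$, or the left and right contributions, using the freedom in the $x_\ell$. I plan the following steps.

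\begin{enumerate}[leftmargin=*]
\item Apply \cref{lem:me-factorised-stationary}(2) to $\eta=(m,0,n,\dots)$ with empty site $2$, where the rate parameters of the first three sites are $x,y,z$. This gives
\[
y\,u(m,0)(1+q)=\frac{\tf(y,1)\tf(z,n-1)}{\tf(y,0)\tf(z,n)}\,z\,u(1,n-1)+q\,\frac{\tf(y,1)\tf(x,m-1)}{\tf(y,0)\tf(x,m)}\,x\,u(1,m-1).
\]
\item Separate variables in this identity. The left side depends only on $y$ and $m$; on the right, the first term varies with $z,n$ and the second with $x,m$. This forces each of $\tilh(z,n)^{-1}u(1,n-1)$ and $\tilh(x,m)^{-1}u(1,m-1)$ to be constant. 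Hence $f(x,n)=x^n f(x,0)\prod_i u(1,i-1)$, as in \cref{lem:talrmp-form-f}, and $u(m,0)$ is constant.
\item Substitute this $f$ into the full-occupancy master equation, using \eqref{eqn:palrmp-out-cur} and \eqref{eqn:palrmp-in-cur}, and compare coefficients of $x_\ell$.
\end{enumerate}

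The coefficient of $x_\ell$ in step 3 gives
\[
u(\eta_{\ell-1},\eta_\ell)+q\,u(\eta_{\ell+1},\eta_\ell)=\frac{u(1,\eta_\ell)}{u(1,\eta_{\ell+1}-1)}u(\eta_\ell+1,\eta_{\ell+1}-1)+q\,\frac{u(1,\eta_\ell)}{u(1,\eta_{\ell-1}-1)}u(\eta_\ell+1,\eta_{\ell-1}-1).
\]
Write $A(a,n,b)=u(a,n)-\frac{u(1,n)}{u(1,b-1)}u(n+1,b-1)$. The identity becomes
\[
A(a,n,b)+q\,A(b,n,a)=0 \quad\text{for all } a,n,b\ge1,
\]
where $a=\eta_{\ell-1}$, $n=\eta_\ell$ and $b=\eta_{\ell+1}$. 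Swapping $a$ and $b$ gives $A(b,n,a)+q\,A(a,n,b)=0$. Combining the two yields $(1-q^2)A=0$, so $A=0$ since $q\ne1$ and $q\ge0$. This is exactly \eqref{eqn:talrmp-me-no-empty}, which shows $u(m,n)=\phi(n)$, and the TALRMP statement then follows from \cref{lem:talrmp-factorised-converse}.

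\textbf{Main obstacle.} The hardest part is step 2, separating variables in the one-empty-site equation. The presence of $m$ both in $u(m,0)$ and in the second term means the constancy argument must be done carefully. I would vary $z$ and $n$ first, holding the other variables fixed, to get constancy of $\tilh(z,n)^{-1}u(1,n-1)$, and then treat the remaining terms. Small $L$ (for instance $L=2$, where sites $1$ and $3$ coincide) needs a separate check, but the configurations with $L\ge3$ already give all the needed relations.
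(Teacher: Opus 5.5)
Your converse has a genuine gap in steps 1--2. In $\eta=(m,0,n,\eta_4,\dots,\eta_L)$, the leftward hop into the empty site $2$ comes from site $3$, which holds $n$ particles. So the outgoing current involving site $2$ is $\pi(\eta)\,y\,\bigl(u(m,0)+q\,u(n,0)\bigr)$, not $\pi(\eta)\,y\,u(m,0)(1+q)$; compare the case $q=1$ in \cref{lem:slrmp-form-f}. After cancelling $y$, the correct identity is
\[
u(m,0)+q\,u(n,0)=\tilh(y,1)\left(\frac{u(1,n-1)}{\tilh(z,n)}+q\,\frac{u(1,m-1)}{\tilh(x,m)}\right).
\]
Its left side also depends on $n$. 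Varying $n$ with everything else fixed therefore no longer shows that $u(1,n-1)/h(n)$ is constant, and constancy of $u(\cdot,0)$ does not follow.

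A warning sign is that your step 2 never uses $q\neq 1$. Run at $q=1$, the same argument would show that $u(\cdot,0)$ is constant whenever the SLRMP is factorised. That is false: the rate built in the proof of \cref{thm:system-arb-weight}(3) has a factorised SLRMP stationary distribution but non-constant $u(\cdot,0)$. The error matters downstream. Step 3 uses the form of $f$ obtained in step 2, and your identity $A\equiv 0$ only gives $m$-independence of $u(m,n)$ for $n\geq 1$, so the case $n=0$ rests entirely on step 2.

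The repair is the same symmetrisation you already use in step 3. Varying $y$, $z$ and $x$ one at a time still shows that $\tilh(w,k)=h(k)$ does not depend on $w$; for $x$ this needs $q>0$, and $q=0$ is trivial. Write $\beta(k)=u(1,k-1)/h(k)$ and $E(m,n)=u(m,0)-h(1)\beta(n)$. The identity becomes $E(m,n)+q\,E(n,m)=0$ for all $m,n\in\N$. Exchanging $m$ and $n$ gives $(1-q^2)E\equiv 0$, so $E\equiv 0$.

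Hence $u(m,0)=h(1)\beta(n)$ for all $m,n$. It follows that $u(\cdot,0)\equiv u(1,0)$ and $h(n)=h(1)u(1,n-1)/u(1,0)$, which is the TALRMP form of $f$. This is exactly where $q\neq1$ must enter at the one-empty-site level. With this fix, your step 3, including the $(1-q^2)A=0$ argument, and the appeal to \cref{lem:talrmp-factorised-converse} go through.

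Your forward direction is correct as written. One small slip: the incoming configuration there is $\eta^{\ell-1,\ell}$, not $\eta^{\ell,\ell+1}$, though your displayed formula is right.

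The paper takes a different route and avoids explicit computation. It reverses the ring together with the rate parameters, which turns the leftward currents of $\eta$ into $q$ times the rightward currents of the reversed configuration $\teta$. The forward direction then follows from the TALRMP master equations for $\eta$ and $\teta$. The converse follows from the resulting $2\times 2$ linear system with determinant $1-q^2$, and the classification $u(m,n)=\phi(n)$ is never needed.
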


\begin{rem}
	\label{rem:alrmp-q-remark}
	By \cref{lem:alrmp-iff-talrmp}, if we can show that the stationary distribution corresponding to $u$ factorises for some PALRMP, when $q \neq 1$, then the stationary distribution corresponding to $u$ factorises for all PALRMP.
	In the proof of \cref{lem:alrmp-iff-talrmp}, we show that if the stationary distribution factorises in the TALRMP, it also factorises in the SLRMP, but the converse is not true.
\end{rem}

\begin{proof}
    For a probability distribution $\sigma$ on $\Omega_{L,N}$ and configuration $\xi \in \Omega_{L,N}$, we define $\text{IRC}_{\sigma}(\xi)$ (resp. $\text{ORC}_{\sigma}(\xi)$) to be the stationary probability current into (resp. out of) $\xi$ due to transitions where particle move towards the right.
    We note that $\text{IRC}_{\sigma}(\xi)$ (resp. $\text{ORC}_{\sigma}(\xi)$) is the incoming (resp. outgoing) probability current into (resp. out of) $\xi$ in the TALRMP with the probability distribution $\sigma$.
    We can similarly define $\text{ILC}_{\sigma}(\xi)$ and $\text{OLC}_{\sigma} (\xi)$, due to particles moving left.
    So, $\xi$ satisfies the master equation for probability distribution $\sigma$ if and only if
    \begin{equation*}
        (\text{IRC}_{\sigma}(\xi) - \text{ORC}_{\sigma}(\xi)) + (\text{ILC}_{\sigma}(\xi) - \text{OLC}_{\sigma}(\xi))= 0.
    \end{equation*}
    
	Suppose that the stationary distribution corresponding to $u$ factorises in the TALRMP, with stationary distribution $\pi$ and one-point function $f$ satisfying \eqref{eqn:talrmp-factorised-weight-intermediate}.
    We show that $\pi$ is the stationary distribution corresponding to $u$ in the PALRMP.

    Consider $\eta = (\eta_1,\dots,\eta_L) \in \Omega_{L,N}$.    
    By the master equation for $\eta$ in the TALRMP,
    \begin{equation}
    	\label{eqn:alrmp-pc-me-eta}
        \text{IRC}_\pi(\eta) - \text{ORC}_\pi(\eta) = 0.
    \end{equation}

    Consider the system where site $\ell$ has rate parameter $x_{L+1-\ell}$, and the configuration $\teta = (\eta_L,\eta_{L-1},\dots,\eta_2,\eta_1) \in \Omega_{L,N}$. 
    This corresponds to reversing the configuration $\eta$.
    Considering the TALRMP on this new system, the stationary distribution corresponding to $u$ factorises with one-point function $f$, and so $\pi$ is the stationary distribution for the new system.
    As we have also changed the rate parameters,
    \begin{equation*}
        \pi(\eta) = \dfrac{1}{Z_{L,N}}f(x_1,\eta_1)f(x_2,\eta_2)\cdots f(x_L,\eta_L) = \dfrac{1}{Z_{L,N}}f(x_L,\eta_L)\cdots f(x_1,\eta_1) = \pi(\teta).
    \end{equation*}
    Considering the master equation of $\teta$,
    \begin{equation}
    	\label{eqn:alrmp-pc-me-oleta}
    	\text{IRC}_\pi(\teta) - \text{ORC}_\pi(\teta) = 0.
    \end{equation}

    There is a bijective correspondence between the transitions out of $\eta$ where particles move left and the transitions out of $\teta$, where particles moves right, namely, the transition out of $\eta$ where a particle moves left from site $k$ to site $\ell$ corresponds to the transition out of $\teta$, where a particle moves right from site $(L+1 - k)$ to site $(L+1 - \ell)$.
    Now, for all valid transition where a particle moves left from site $k$ to site $\ell$ in $\eta$,
    \begin{equation*}
    	\text{rate}(\eta \to \eta^{k,\ell}) = q \, x_\ell \, u(\eta_k,\eta_\ell) = q \times \text{rate}(\teta \to \teta ^{L+1 - k,L+1 - \ell})
    \end{equation*}
    Then, considering all transitions out of $\eta$ where particles move left,
    \begin{equation}
    	\label{eqn:alrmp-olc-equal}
    	\text{OLC}_\pi(\eta) = q \times \text{ORC}_\pi (\teta).
    \end{equation}
    Similarly,
    \begin{equation}
    	\label{eqn:alrmp-ilc-equal}
    	\text{ILC}_\pi(\eta) = q \times \text{IRC}_\pi (\teta).
    \end{equation}
    Then, from \eqref{eqn:alrmp-pc-me-eta}, \eqref{eqn:alrmp-pc-me-oleta}, \eqref{eqn:alrmp-olc-equal} and \eqref{eqn:alrmp-ilc-equal},
    \begin{equation*}
    	(\text{IRC}_{\pi}(\eta) - \text{ORC}_{\pi}(\eta)) + (\text{ILC}_{\pi}(\eta) - \text{OLC}_{\pi}(\eta))= 0,
    \end{equation*}
    that is, $\eta$ satisfies the master equation.
    Thus, $\pi$ is the stationary distribution corresponding to $u$ in the PALRMP.
    
    We now show the converse.
    Suppose that the stationary distribution, $\pi$, corresponding to $u$ factorises in the PALRMP, for some $q \neq 0,1$.
    We show that $\pi$ is also the stationary distribution corresponding to $u$ in the TALRMP by showing that all configurations satisfy the master equation.
    
    Consider $\eta = (\eta_1,\dots,\eta_L) \in \Omega_{L,N}$, and $\teta$ as above.
    Considering the master equation for $\eta$,
    \begin{equation}
    	\label{eqn:alrmp-new-me-eta}
    	(\text{IRC}(\eta) - \text{ORC}(\eta)) + q(\text{IRC}(\teta) - \text{ORC}(\teta)) = 0.
    \end{equation}
    Similarly, the master equation corresponding to $\teta$ is
    \begin{equation}
    	\label{eqn:alrmp-new-me-oleta}
    	q(\text{IRC}(\eta) - \text{ORC}(\eta)) + (\text{IRC}(\teta) - \text{ORC}(\teta)) = 0.
    \end{equation}
    From \eqref{eqn:alrmp-new-me-eta} and \eqref{eqn:alrmp-new-me-oleta}, as $q \neq 1$, we have
    \begin{equation}
    	\label{eqn:alrmp-left-current}
    	\text{IRC}(\eta) - \text{ORC}(\eta) = 0.
    \end{equation}
    So, $\eta$ satisfies the master equation in the TALRMP for probability distribution $\pi$, completing the proof.
\end{proof}

\begin{proof}[Proof of \cref{thm:palrmp-factorised}]
    By \cref{lem:alrmp-iff-talrmp}, for $q \neq 0,1$, the stationary distribution corresponding to $u$ is factorised in the PALRMP if and only if the stationary distribution corresponding to $u$ is factorised in the TALRMP, that is, $u$ satisfies \eqref{eqn:palrmp-factorised-cond}, and the one-point function is given by \eqref{eqn:palrmp-factorised-weight}.

    The converse, where we have to show that the probability distribution due to $f$ given by \eqref{eqn:palrmp-factorised-weight}, according to \cref{rem:one fn}, is indeed a stationary distribution, is also proven by \cref{lem:alrmp-iff-talrmp} and \cref{thm:palrmp-factorised} for the TALRMP, as the one-point functions are the same.   This completes the proof.
\end{proof}

\begin{proof}[Proof of \cref{thm:system-arb-weight}(1)]
	We first consider the case when $g(0) = 1$.
	Then, for $n \in \N_0$, let
    \begin{equation*}
        \phi(n) = \dfrac{g(n+1)}{g(n)},
    \end{equation*}
    and let $u$ be the rate defined by \eqref{eqn:palrmp-factorised-cond}.
    Then, the stationary distribution corresponding to $u$ in the PALRMP is factorised, and, as $g(0) = 1$, the one-point function is given by
    \begin{equation*}
        f(x,n) = x^n \prod_{i=1}^{n}\phi(i-1) = x^n \prod_{i=1}^n \dfrac{g(i)}{g(i-1)} = x^ng(n).
    \end{equation*}
	So, in this case, the assertion holds.
	
	If $g(0) \neq 1$, consider $\tg(m) = g(m)/g(0)$.
	Then, there exists a PALRMP with one-point function $\tf(x,n) = x^n \tg(x)$.
	As $g$ is a constant multiple of $\tg$, $f(x,n) = x^n g(n)$ is a one-point function of the PALRMP as well, by \cref{prop:weight-func-rescale}.
\end{proof}

\section{Symmetric long range misanthrope process}
\label{sec:slrmp}

We note that \eqref{eqn:(h)slrmp-factorised-cond} for $u$ is the same as \cite[Equation (2.3)]{cocozza-1985} and \cite[Equation (130)]{evans-hanney-2005}, which gives one of the two conditions for there to be a factorised stationary distribution in the misanthrope process,
as mentioned in \cref{rem:(h)slrmp-similar-misanthrope}.

\begin{lem}
	\label{lem:slrmp-form-f}
	Suppose that the stationary distribution corresponding to $u$ is factorisable in the SLRMP.
	Then, there exists a one-point function, $f$, corresponding to $u$ satisfying
	\begin{equation}
		\label{eqn:slrmp-factorised-weight-intermediate}
		f(x,n) = x^n f(x,0) \prod_{i=1}^n \dfrac{u(1,i-1)}{u(i,0)}.
	\end{equation}
\end{lem}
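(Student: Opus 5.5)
We mirror the proof of \cref{lem:talrmp-form-f}, replacing the single transition through the empty site by the two transitions through it that exist when $q=1$. Let $\tf$ be a one-point function corresponding to $u$ and $\pi$ the stationary distribution. We obtain a functional equation for $\tf$ from \cref{lem:me-factorised-stationary}(2), and then normalise it using \cref{prop:weight-func-rescale}.

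Fix $N \geq L-1$ with $L \geq 3$. By \cref{prop:trans-inv}, take $\eta = (m,0,n,\eta_4,\dots,\eta_L)$ with $m,n \in \N$ and a single empty site $2$. Let the first three sites have rate parameters $x,y,z$.
\begin{itemize}
\item Outgoing transitions involving site $2$: a particle moves right from site $1$ or left from site $3$, with total current $\pi(\eta)\,y\,(u(m,0)+u(n,0))$.
\item Incoming transitions involving site $2$: these come from $\eta^{1,2}$ (a particle moving left from site $2$ to site $1$, rate $x\,u(1,m-1)$) and from $\eta^{3,2}$ (a particle moving right from site $2$ to site $3$, rate $z\,u(1,n-1)$).
\end{itemize}
With $\tilh(w,k) = \tf(w,k)/(w\tf(w,k-1))$ as in \eqref{eqn:inhom-define-tg}, the equality of currents from \cref{lem:me-factorised-stationary}(2) reads
\begin{equation*}
u(m,0)+u(n,0) = \tilh(y,1)\left(\frac{u(1,m-1)}{\tilh(x,m)} + \frac{u(1,n-1)}{\tilh(z,n)}\right)
\end{equation*}
for all $m,n \in \N$ and all $x,y,z>0$.

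First fix $m,n,x,z$ and vary $y$. The left side is independent of $y$, and the bracket is strictly positive, so $\tilh(y,1)$ is a constant $c$.

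Next separate variables: $x$ appears only in $u(1,m-1)/\tilh(x,m)$ and $z$ only in $u(1,n-1)/\tilh(z,n)$. Set $A(x,m) = u(m,0) - c\,u(1,m-1)/\tilh(x,m)$. The equation becomes $A(x,m) + A(z,n) = 0$ for all $x,z,m,n$. Taking $(x,m)=(z,n)$ gives $A \equiv 0$. Hence
\begin{equation*}
\tilh(x,m) = c\,\frac{u(1,m-1)}{u(m,0)}
\end{equation*}
for every $x>0$ and $m \in \N$. This separation step is the main point of difference from the TALRMP and the only mildly delicate one. In particular, no constancy of $u(m,0)$ follows here, which is why $u(i,0)$ appears in the denominator of \eqref{eqn:slrmp-factorised-weight-intermediate}.

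Unwinding the definition of $\tilh$ gives $\tf(x,n) = x\,c\,\frac{u(1,n-1)}{u(n,0)}\,\tf(x,n-1)$. Iterating yields
\begin{equation*}
\tf(x,n) = c^n x^n \tf(x,0)\prod_{i=1}^n \frac{u(1,i-1)}{u(i,0)}.
\end{equation*}
By \cref{prop:weight-func-rescale}, $f(x,n) = c^{-n}\tf(x,n)$ is also a one-point function, and it has the form \eqref{eqn:slrmp-factorised-weight-intermediate}.

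The case $L=2$ is not needed, since $L \geq 3$ already gives the identity for all $m,n$. For the $y$-constancy step, note that $L \geq 3$ lets us choose $x$, $y$, $z$ independently.
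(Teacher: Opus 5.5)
Your proof is correct and follows the paper's argument. It uses the same single-empty-site configuration $(m,0,n,\eta_4,\dots,\eta_L)$, the same two-term current balance from \cref{lem:me-factorised-stationary}(2), the relation $\tilh(x,m)=c\,u(1,m-1)/u(m,0)$, iteration, and rescaling via \cref{prop:weight-func-rescale}. Your explicit separation step (first showing $\tilh(\cdot,1)$ is constant, then taking $(x,m)=(z,n)$ to force $A\equiv 0$) justifies more carefully what the paper does by invoking $h$ from \eqref{eqn:inhom-define-g} and ``setting $m=n$''.
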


\begin{proof}
	Let $\tf$ be a one-point function corresponding to $u$, and $\pi$ the stationary distribution.
    We proceed as in the proof of \cref{lem:talrmp-form-f}, considering the configuration $\eta = (m, 0, n, \eta_4, \dots, \eta_L)$.
    The stationary outgoing probability current due to transitions involving site $2$ is
    \begin{equation*}
        \pi(\eta)y \, (u(m,0)+u(n,0)),
    \end{equation*}
    and the stationary incoming probability current due to transitions involving site $2$ is
    \begin{equation*}
        \pi(\eta)\left(\dfrac{\tf(y,1)}{\tf(y,0)} \dfrac{\tf(z,n-1)}{\tf(z,n)}z \, u(1,n-1)+\dfrac{\tf(y,1)}{\tf(y,0)} \dfrac{\tf(x,m-1)}{\tf(x,m)}x \, u(1,m-1)\right).
    \end{equation*}
    Defining $\tilh$ and $h$ according to \eqref{eqn:inhom-define-tg} and \eqref{eqn:inhom-define-g}, respectively,
    and setting $m = n$, we obtain
    \begin{equation}
    \label{eqn:salrmp-me-one-empty}
        u(n,0) = \dfrac{h(1)}{h(n)}u(1,n-1).
    \end{equation}

    So, we have the recursive relation
	\begin{equation*}
		\tf(x,n) = x\tf(x,n-1) h(1)\dfrac{u(1,n-1)}{u(n,0)}.
	\end{equation*}
	Solving the recursive relation,
	\begin{equation*}
		\tf(x,n) = x^n h(1)^n \tf(x,0) \prod_{i=1}^n \dfrac{u(1,i-1)}{u(i,0)}.
	\end{equation*}
	
	Hence, by \cref{prop:weight-func-rescale}, $f(x,n) = \tf(x,n)/(h(1)^n)$ is a one-point function corresponding to $u$ satisfying the condition in \eqref{eqn:slrmp-factorised-weight-intermediate}.
\end{proof}

We note that \eqref{eqn:talrmp-me-one-empty} and \eqref{eqn:salrmp-me-one-empty} differ, as there is an $m$ on the left hand side of \eqref{eqn:talrmp-me-one-empty}, and so we do not obtain $u(m,0)$ to be a constant function.

\begin{lem}
    \label{lem:slrmp-factorised-converse}
    If $u$ satisfies \eqref{eqn:(h)slrmp-factorised-cond} for all $m,n \in \N$, then the stationary distribution corresponding to $u$ is factorised, with one-point function $f$ given by \eqref{eqn:slrmp-factorised-weight}.
\end{lem}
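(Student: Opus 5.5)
We prove \cref{lem:slrmp-factorised-converse} by verifying conditions (a) and (b) of \cref{lem:me-new} for the factorised distribution $\sigma$ built from $f(x,n)=x^nF(n)$, where $F(n)=\prod_{i=1}^n u(1,i-1)/u(i,0)$. In both cases we aim for pairwise balance \eqref{pairwise-balance}, transition by transition, as in \cref{lem:talrmp-factorised-converse}. The only facts about $f$ we need are the ratios
\begin{equation*}
\frac{f(x,n+1)}{f(x,n)}=x\,\frac{u(1,n)}{u(n+1,0)},\qquad \frac{f(x,n-1)}{f(x,n)}=\frac{u(n,0)}{x\,u(1,n-1)} .
\end{equation*}
In particular $f(y,1)/f(y,0)=y$. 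We use $q=1$ throughout.

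\emph{Condition (b).} By \cref{prop:trans-inv}, take $L\geq 3$ and $\xi=(m,0,n,\xi_4,\dots,\xi_L)$ with $m,n\geq1$ and rate parameters $x,y,z$ on the first three sites.
\begin{itemize}
\item[] The outgoing currents involving site $2$ are $\sigma(\xi)\,y\,u(m,0)$, from the move $1\to2$, and $\sigma(\xi)\,y\,u(n,0)$, from the move $3\to2$.
\item[] The incoming current from $\xi^{3,2}$, where a particle moves from site $2$ to site $3$, is
\begin{equation*}
\sigma(\xi)\,\frac{f(y,1)}{f(y,0)}\,\frac{f(z,n-1)}{f(z,n)}\,z\,u(1,n-1)=\sigma(\xi)\,y\,u(n,0).
\end{equation*}
\item[] Symmetrically, the incoming current from $\xi^{1,2}$ equals $\sigma(\xi)\,y\,u(m,0)$.
\end{itemize}
So the two incoming currents match the two outgoing ones. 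When $L=2$ the sites on either side of site $2$ coincide, and the same computation gives $2y\,u(m,0)$ on both sides. Note that \eqref{eqn:(h)slrmp-factorised-cond} is not needed here; it is absorbed by the choice of $F$.

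\emph{Condition (a).} Take $\eta$ with all sites occupied, so every transition is between neighbours. Fix $\ell$ and write $m=\eta_{\ell-1}$, $n=\eta_\ell$, both at least $1$. By \eqref{eqn:palrmp-in-cur} and the ratios above, the incoming current from $\eta^{\ell,\ell-1}$, where a particle moves right from $\ell-1$ to $\ell$, is
\begin{equation*}
\sigma(\eta)\,x_{\ell-1}\,\frac{u(1,m)\,u(n,0)}{u(m+1,0)\,u(1,n-1)}\,u(m+1,n-1).
\end{equation*}
Apply \eqref{eqn:(h)slrmp-factorised-cond} with the roles of $m$ and $n$ swapped, which is legitimate since $m,n\in\N$:
\begin{equation*}
\frac{u(n,m)}{u(n,0)u(1,m)}=\frac{u(m+1,n-1)}{u(m+1,0)u(1,n-1)}.
\end{equation*}
Substituting, the incoming current becomes exactly $\sigma(\eta)\,x_{\ell-1}\,u(n,m)$. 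This is the outgoing current of the left move from site $\ell$ to site $\ell-1$.

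In the same way, the incoming left move into $\ell$ from $\ell+1$ equals the outgoing right move from $\ell$ to $\ell+1$. These pairings form a bijection between incoming and outgoing transitions, so pairwise balance holds and $\eta$ satisfies the master equation. The case $L=2$ works identically, with the pairing done separately for the right-moving and left-moving transitions. The case $L=1$ is trivial.

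The main obstacle is this step: keeping the indices straight so that the incoming right-move current is paired with the correct outgoing left move, and checking that the argument shift in \eqref{eqn:(h)slrmp-factorised-cond} is used only with both arguments in $\N$. Once (a) and (b) are established, \cref{lem:me-new} yields that $\sigma$ is stationary, with one-point function \eqref{eqn:slrmp-factorised-weight}.
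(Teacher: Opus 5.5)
Your proposal is correct and follows the same route as the paper: you verify conditions (a) and (b) of \cref{lem:me-new} for the $f$ in \eqref{eqn:slrmp-factorised-weight}, and pairing each incoming transition with its reverse outgoing transition is the detailed balance the paper notes. Your write-up is more explicit than the paper's sketch, including the correct application of \eqref{eqn:(h)slrmp-factorised-cond} with $m$ and $n$ swapped (both in $\N$) and the $L=2$ case.
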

\begin{proof}
	The proof is similar to that of \cref{lem:talrmp-factorised-converse}, using \cref{lem:me-new} to show that the probability distribution corresponding to $f$, according to \cref{rem:one fn}, is the stationary distribution.
	We can further show that for any arbitrary configuration, detailed balance  holds.
	The stationary incoming probability current due to the transition where the particle moves from site $k$ to site $\ell$ is equal to the stationary outgoing probability current due to the transition where the particle moves from site $\ell$ to site $k$.
\end{proof}

\begin{proof}[Proof of \cref{thm:(h)slrmp-factorised} for SLRMP]
	We first consider the case when the stationary distribution corresponding to $u$ is factorised in the SLRMP.
	Let $f$ be a one-point function corresponding to $u$ satisfying \eqref{eqn:slrmp-factorised-weight-intermediate}, with probability distribution $\pi$.
	
	For $N \geq L$, consider a configuration $\eta \in \Omega_{L,N}$, where all the sites contain at least one particle.
    Then, the stationary outgoing current from $\eta$ is given by \eqref{eqn:palrmp-out-cur},
	\begin{equation*}
		\pi(\eta)\sum_{\ell=1}^L x_\ell(u(\eta_{\ell-1},\eta_\ell)+u(\eta_{\ell+1},\eta_\ell)).
	\end{equation*}
	The stationary incoming probability current to $\eta$ is given by \eqref{eqn:palrmp-in-cur}.
    Using the explicit form of the one-point function due to \eqref{eqn:slrmp-factorised-weight-intermediate}, and rearranging the terms, the stationary incoming probability current into $\eta$ is,
	\begin{align*}
		\pi(\eta)\sum_{\ell=1}^L x_\ell\bigg(& \dfrac{u(1,\eta_\ell)}{u(\eta_\ell+1,0)}\dfrac{u(\eta_{\ell+1},0)}{u(1,\eta_{\ell+1}-1)}u(\eta_\ell+1,\eta_{\ell+1}-1)\\
		&+\dfrac{u(\eta_{\ell-1},0)}{u(1,\eta_{\ell-1}-1)} \dfrac{u(1,\eta_\ell)}{u(\eta_\ell+1,0)} u(\eta_\ell+1,\eta_{\ell-1}-1) \bigg).
	\end{align*}
	The stationary incoming and outgoing currents are polynomials in $x_\ell$'s, and are equal by the master equation.
    So, the coefficients of the two polynomials are the same, that is, for all $1\leq \ell \leq L$,
	\begin{equation*}
		\begin{split}			
			u(\eta_{\ell-1},\eta_\ell)+u(\eta_{\ell+1},\eta_\ell) =&
			\dfrac{u(1,\eta_\ell)}{u(\eta_\ell+1,0)}\dfrac{u(\eta_{\ell+1},0)}{u(1,\eta_{\ell+1}-1)}u(\eta_\ell+1,\eta_{\ell+1}-1)\\
		&+\dfrac{u(\eta_{\ell-1},0)}{u(1,\eta_{\ell-1}-1)} \dfrac{u(1,\eta_\ell)}{u(\eta_\ell+1,0)} u(\eta_\ell+1,\eta_{\ell-1}-1).
		\end{split}
	\end{equation*}
	Thus, for all $m,n,k \in \N$,
	\begin{equation}
		\label{eqn:slrmp-me-no-empty}
		\begin{split}			
			u(m,n)+u(k,n) =& \dfrac{u(1,n)}{u(n+1,0)}\dfrac{u(k,0)}{u(1,k-1)}u(n+1,k-1)\\
			&+\dfrac{u(1,n)}{u(n+1,0)}\dfrac{u(m,0)}{u(1,m-1)}u(n+1,m-1).
		\end{split}
	\end{equation}
	Substituting $k = m$ in \eqref{eqn:slrmp-me-no-empty}, for all $m,n \in \N$, we obtain \eqref{eqn:(h)slrmp-factorised-cond}.

    \cref{lem:slrmp-factorised-converse} shows the converse.
\end{proof}

\section{Homogeneous totally asymmetric long range misanthrope process}
\label{sec:htalrmp}

In the homogeneous variants of the PALRMP, all the rate parameters are $1$.
So, in a configuration $\eta$, the rate of a transition due to a particle moving right (resp. left), from site $k$ to site $\ell$ is $u(\eta_k,\eta_\ell)$ (resp. $q\,u(\eta_k,\eta_\ell)$). In this case, similar to \cref{cor:one fn} all the $f_\ell$'s will be equal, that is, there exists a single function $f: \N_0 \to \R$, such that for all sites $\ell \in \N$, $f_\ell(m) = f(m)$.
So, henceforth when referring to the one-point function corresponding to rate $u$, we refer to $f$ instead of $(f_\ell)_{\ell=1}^\infty$.
In the homogeneous case, given a one-point function $g$ corresponding to a hop rate $u$ with factorised stationary distribution in the homogeneous PALRMP, $f(m) = g(m)/g(0)$ is another one-point function corresponding to $u$, by \cref{prop:weight-func-rescale}.
So, for any rate $u$ which has factorised stationary distribution, there exists a one-point function $f$ corresponding to $u$ such that $f(0) = 1$.

\begin{lem}
	\label{lem:htalrmp-form-f}
	Suppose that the stationary distribution corresponding to $u$ is factorisable in the homogeneous TALRMP.
	Then, $u(m,0)$ is a constant function for $m \in \N$, and there exists a one-point function $f$, corresponding to $u$, satisfying \eqref{eqn:hpalrmp-factorised-weight}.
\end{lem}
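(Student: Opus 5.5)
This is the homogeneous analogue of \cref{lem:talrmp-form-f}, and I will follow the same argument with all rate parameters set to $1$. Let $\tf:\N_0\to\R_{>0}$ be a one-point function corresponding to $u$, normalised so that $\tf(0)=1$ (allowed by \cref{prop:weight-func-rescale}), and let $\pi$ be the stationary distribution. For $N\geq L-1$ with $L\geq 3$, I will use \cref{prop:trans-inv} (all parameters equal, so the shift is harmless) to assume that the single empty site is site $2$. So $\eta=(m,0,n,\eta_4,\dots,\eta_L)$ with $m,n\in\N$ and the remaining entries positive. Such configurations exist for every $m,n\in\N$ once $N$ is large enough.

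In the TALRMP there is exactly one transition out of $\eta$ involving site $2$, namely $\eta\to\eta^{1,2}$, with rate $u(m,0)$. There is exactly one transition into $\eta$ involving site $2$, namely $\eta^{3,2}\to\eta$, with rate $u(1,n-1)$. \cref{lem:me-factorised-stationary}(2) says these two stationary currents are equal, which gives
\begin{equation*}
\pi(\eta)\,u(m,0)=\pi(\eta)\,\dfrac{\tf(1)}{\tf(0)}\,\dfrac{\tf(n-1)}{\tf(n)}\,u(1,n-1).
\end{equation*}
Set $h(k)=\tf(k)/\tf(k-1)$, so the relation reads $u(m,0)=h(1)\,u(1,n-1)/h(n)$ for all $m,n\in\N$. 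The right-hand side does not depend on $m$, so $u(m,0)$ is constant in $m\in\N$; this is the first claim.

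Putting $m=1$ gives $h(n)=h(1)\,u(1,n-1)/u(1,0)$. Hence
\begin{equation*}
\tf(n)=\left(\dfrac{h(1)}{u(1,0)}\right)^n\prod_{i=1}^n u(1,i-1).
\end{equation*}
Rescaling with $b=u(1,0)/h(1)$ and $a=1$ in \cref{prop:weight-func-rescale} then gives $f(n)=\prod_{i=1}^n u(1,i-1)$, which is \eqref{eqn:hpalrmp-factorised-weight}.

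The only delicate point is making sure the needed configurations exist. I need $L\geq 3$ so that sites $1$ and $3$ are distinct, with arbitrary $m,n\geq1$ at sites $1$ and $3$; taking $L=3$ and $N=m+n$ suffices. The inhomogeneous proof separated variables through $y$ and $z$; here that step is unnecessary, because $m$ already appears only on the left and $n$ only on the right. I do not expect a real obstacle.
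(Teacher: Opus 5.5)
Your proof is correct and follows the paper's route: the paper only says the argument is that of \cref{lem:talrmp-form-f}, and you have carried it out with all rate parameters equal to $1$. That means equating the currents through the single empty site via \cref{lem:me-factorised-stationary}(2), reading off that $u(\cdot,0)$ is constant, solving the recursion at $m=1$, and rescaling with \cref{prop:weight-func-rescale}. You are also right that the separation of variables in $y,z$ is no longer needed, and that $L=3$, $N=m+n$ supplies the required configurations.
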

\begin{proof}
	The proof is similar to that of \cref{lem:talrmp-form-f}.
\end{proof}

We note that, unlike \cref{lem:talrmp-form-f}, there is no additional factor of $f(0)$ in the expression for the one-point function that $f$ satisfies, as we can utilise \cref{prop:weight-func-rescale} to divide throughout.

The following lemma, \cref{lem:circular-sum-2}, is required in the proof of \cref{thm:hpalrmp-factorised}, when equating the stationary incoming and outgoing probability currents for a configuration all of whose sites contain particles.

\begin{lem}
	\label{lem:circular-sum-2}
	Consider a set $S$, a group $(G,\cdot)$, $M \in \N_0$ and $F: S\times S \to G$.
	Suppose that for all $L \geq M$ and $s_1,s_2,\dots,s_{L} \in S$, we have
	\begin{equation*}
		\prod_{\ell=1}^L F(s_\ell,s_{\ell+1}) = e_G,
	\end{equation*}
	where $s_{L+1} \equiv s_1$ and $e_G$ is the identity of $G$.
    Then, for all $p,q,r \in S$,
	\begin{equation*}
		F(p,q) = F(p,r) F(q,r)^{-1}.
	\end{equation*}
\end{lem}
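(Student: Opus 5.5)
The idea is to feed the hypothesis cyclic sequences that consist almost entirely of copies of $r$, so that the long stretches of $r$ contribute only powers of $F(r,r)$, and to show first that $F(r,r)=e_G$. Throughout I keep track of the order of the factors, since $G$ need not be abelian.

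\emph{Step 1: $F(r,r)=e_G$.} Take the constant sequence $s_1=\cdots=s_L=r$. The hypothesis gives $F(r,r)^L=e_G$ for every $L\geq M$. Applying this with $L$ and with $L+1$ (both at least $M$) gives
\[
F(r,r)=F(r,r)^{L+1}\bigl(F(r,r)^{L}\bigr)^{-1}=e_G .
\]
This holds for every $r\in S$.

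\emph{Step 2: $F(r,p)=F(p,r)^{-1}$.} Fix $L\geq \max(M,2)$ and take the sequence $(p,r,r,\dots,r)$ of length $L$. The cyclic product is
\[
F(p,r)\,F(r,r)^{L-2}\,F(r,p)=e_G .
\]
By Step 1 the middle factor is $e_G$, so $F(p,r)F(r,p)=e_G$, which is the claim.

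\emph{Step 3: a triangle identity.} Fix $L\geq\max(M,3)$ and take the sequence $(p,q,r,\dots,r)$ of length $L$, with $L-2$ copies of $r$. Its cyclic product is
\[
F(p,q)\,F(q,r)\,F(r,r)^{L-3}\,F(r,p)=e_G .
\]
Using Step 1, this becomes $F(p,q)F(q,r)F(r,p)=e_G$. Using Step 2, we get $F(p,q)F(q,r)=F(r,p)^{-1}=F(p,r)$. Multiplying on the right by $F(q,r)^{-1}$ gives $F(p,q)=F(p,r)F(q,r)^{-1}$.

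The only points needing care are the following. Each sequence used must have length at least $M$, which is always achievable by padding with extra copies of $r$; this is the reason for working with long runs of $r$. The group multiplication must be kept in cyclic order, and this is respected above because the run of $r$'s sits between $F(q,r)$ (or $F(p,r)$) and $F(r,p)$. The coincidences $p=q$, $q=r$ or $p=r$ need no separate treatment, since the sequences above remain valid. I do not expect a real obstacle; the main subtlety is Step 1, which uses two consecutive lengths to remove the power of $F(r,r)$.
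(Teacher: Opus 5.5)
Your proof is correct, but it takes a slightly longer route than the paper's.

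The paper compares directly the two cyclic words $(p,\dots,p,r)$ of length $M+1$ and $(p,\dots,p,q,r)$ of length $M+2$, each beginning with $M$ copies of $p$. Their products are $F(p,p)^{M-1}F(p,r)F(r,p)$ and $F(p,p)^{M-1}F(p,q)F(q,r)F(r,p)$. Cancelling the common prefix on the left and the common factor $F(r,p)$ on the right gives $F(p,r)=F(p,q)F(q,r)$ in one step. It never needs $F(r,r)=e_G$ or $F(r,p)=F(p,r)^{-1}$.

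You pad with copies of $r$ instead. This forces you first to remove the power of $F(r,r)$ (your Step 1, using two consecutive lengths) and then to prove antisymmetry (Step 2) before the triangle identity. You could have shortened this by comparing $(p,r,\dots,r)$ of length $L$ with $(p,q,r,\dots,r)$ of length $L+1$. These share the right-hand factor $F(r,r)^{L-2}F(r,p)$, so one cancellation finishes the argument.

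Your version does buy two things:
\begin{itemize}
\item It extracts extra structure, namely $F(r,r)=e_G$ and $F(r,p)=F(p,r)^{-1}$ for all $p,r$.
\item It treats small $M$ cleanly. Your choices $L\ge\max(M,2)$ and $L\ge\max(M,3)$ work for every $M\in\N_0$. By contrast, the paper's length-$(M+1)$ word has the stated product $F(p,p)^{M-1}F(p,r)F(r,p)$ only when $M\ge 1$. That is harmless, since one may replace $M$ by $\max(M,1)$.
\end{itemize}
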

\begin{proof}
	Let $p,q,r \in S$.
	Consider the configuration of length $M+1$ such that $s_1 = s_2 = \cdots = s_M = p$ and $s_{M+1} = r$.
	Similarly, we have the configuration of length $M+2$, where the first $M$ coordinates are $p$, the $(M+1)$'th coordinate is $q$ and the last coordinate is $r$.
	Then,
	\begin{equation*}
		F(p,p)^{M-1}F(p,r)F(r,p) = e_G = F(p,p)^{M-1}F(p,q)F(q,r)F(r,p).
	\end{equation*}
	
	Hence, for all $p,q,r \in S$,
	\begin{equation*}
		F(p,q) = F(p,r)F(q,r)^{-1}.
	\end{equation*}
\end{proof}

\begin{cor}
	\label{cor:circular-sum-2-real}
	If $F: \N\times \N \to \R$, such that for any tuple $(x_1,x_2,\dots,x_L)$ of length $L \geq 2$,
    \begin{equation*}
        \sum_{\ell=1}^L F(x_\ell,x_{\ell+1}) = 0,
    \end{equation*}
    then there exists $h : \N \to \R$, such that for all $n,m \in \N$,
	\begin{equation*}
		F(n,m) = h(n) - h(m).
	\end{equation*}
\end{cor}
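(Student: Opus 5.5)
We apply \cref{lem:circular-sum-2} with the additive group $(G,\cdot) = (\R,+)$, $S = \N$ and $M = 2$. The hypothesis of the corollary is that $\sum_{\ell=1}^L F(x_\ell,x_{\ell+1}) = 0$ for every cyclic tuple of length $L \geq 2$, which is exactly the hypothesis of the lemma written additively. The identity is $0$ and the inverse is negation, so the lemma gives, for all $p,q,r \in \N$,
\begin{equation*}
    F(p,q) = F(p,r) - F(q,r).
\end{equation*}

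Next we fix the third argument, say $r = 1$, and define $h : \N \to \R$ by $h(n) = F(n,1)$. Substituting $p = n$ and $q = m$ into the identity above yields $F(n,m) = h(n) - h(m)$ for all $n,m \in \N$, which is the claim.

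The only point needing care is whether the proof of \cref{lem:circular-sum-2} actually uses cycles of the allowed lengths. With $M = 2$, the tuples used there have lengths $M+1 = 3$ and $M+2 = 4$, both at least $2$, so the hypothesis applies to them.

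As a consistency check, we can also argue directly. The cycle $(p,p)$ of length $2$ gives $2F(p,p) = 0$, and the cycle $(p,q)$ gives $F(p,q) = -F(q,p)$. Then the cycle $(p,q,r)$ gives
\begin{equation*}
    F(p,q) + F(q,r) + F(r,p) = 0,
\end{equation*}
and hence $F(p,q) = F(p,r) - F(q,r)$, the same identity as before. There is no real obstacle; the corollary is a direct specialisation of the lemma.
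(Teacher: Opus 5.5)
Your proposal is correct and is the paper's proof: apply \cref{lem:circular-sum-2} in the additive group $(\R,+)$ with $M=2$ and set $h(n) = F(n,1)$. Your remark that the lemma only uses cycles of lengths $M+1$ and $M+2$, and your direct derivation from cycles of lengths $2$ and $3$, are both valid.
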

\begin{proof}
	Defining $h(n) = F(n,1)$ suffices from \cref{lem:circular-sum-2}.
\end{proof}
We note that \cref{cor:circular-sum-2-real} has been proved in \cite[Appendix A]{evans-waclaw-2014} using a different approach.

\begin{proof}[Proof of \cref{thm:hpalrmp-factorised}(1) for homogeneous TALRMP]
    We first consider the case when the stationary distribution corresponding to $u$ is factorised in the homogeneous TALRMP.
	Let $f$ be the one-point function satisfying \eqref{eqn:hpalrmp-factorised-weight} and $\pi$ the stationary distribution.
    We note that \eqref{eqn:hpalrmp-factorised-cond-b} follows from \cref{lem:htalrmp-form-f}.
    
	Consider $N\geq L$ and a configuration $\eta \in \Omega_{L,N}$, where all sites contain at least one particle.
	Then, by \eqref{eqn:palrmp-out-cur}, the stationary outgoing current from $\eta$ is
	\begin{equation*}
	    \pi(\eta)\sum_{\ell=1}^L u(\eta_{\ell-1},\eta_\ell)
	\end{equation*}
	By \eqref{eqn:palrmp-in-cur}, the stationary incoming probability current, after expanding $f$ by \eqref{eqn:hpalrmp-factorised-weight}, is
    \begin{equation*}
        \pi(\eta) \sum_{\ell = 1}^L \dfrac{u(1,\eta_{\ell-1})}{u(1,\eta_\ell-1)}u(\eta_{\ell-1}+1,\eta_\ell-1).
    \end{equation*}
    Equating the currents, due to the master equation, as $\pi(\eta) \neq 0$,
    \begin{equation*}
        \sum_{\ell=1}^L \left( u(\eta_{\ell-1},\eta_\ell) - \dfrac{u(1,\eta_{\ell-1})}{u(1,\eta_\ell-1)}u(\eta_{\ell-1}+1,\eta_\ell-1) \right) = 0.
    \end{equation*}

    Then, by \cref{lem:circular-sum-2} and \cref{cor:circular-sum-2-real}, if we define
    \begin{equation*}
        F(m,n) = u(m,n) - \dfrac{u(1,m)}{u(1,n-1)}u(m+1,n-1),
    \end{equation*}
    for all $m,n \geq 1$, then
    \begin{equation*}
        F(m,n) = F(m,1) - F(n,1).
    \end{equation*}
    Then, as $u(\cdot,0)$ is a constant function by \cref{lem:htalrmp-form-f}, for all $m,n\in \N$
    \begin{align*}
        F(m,n) &= F(m,1) - F(n,1)\\
        &= \left(u(m,1) - \dfrac{u(1,m)}{u(1,0)}u(m+1,0)\right) - \left(u(n,1) - \dfrac{u(1,n)}{u(1,0)}u(n+1,0)\right)\\
        &= (u(m,1) - u(1,m)) - (u(n,1) - u(1,n))
    \end{align*}
    Thus, we obtain \eqref{eqn:hpalrmp-factorised-cond-a}, that is, for $m,n \in \N$,
    \begin{equation*}
        u(m,n) = \dfrac{u(1,m)}{u(1,n-1)}u(m+1,n-1)+(u(m,1) - u(1,m)) - (u(n,1) - u(1,n)).
    \end{equation*}
	
	We now show the converse, using \cref{lem:me-new}.
    Suppose $\sigma$ is the factorised probability distribution corresponding to $f$ given by \eqref{eqn:hpalrmp-factorised-weight}, according to \cref{rem:one fn}.

    For $N\geq L$, let $\zeta \in \Omega_{L,N}$ be a configuration, where all the sites contain at least one particle.
    By similar calculation as above, the incoming probability current is
    \begin{equation*}
        \sigma(\zeta) \sum_{\ell = 1}^L \dfrac{u(1,\zeta_{\ell-1})}{u(1,\zeta_\ell-1)}u(\zeta_{\ell-1}+1,\zeta_\ell-1),
    \end{equation*}
    and the outgoing probability current is
    \begin{equation*}
	    \sigma(\zeta)\sum_{\ell=1}^L u(\zeta_{\ell-1},\zeta_\ell).
	\end{equation*}
    Now, as
    \begin{equation*}
        u(\zeta_{\ell-1},\zeta_\ell) - \dfrac{u(1,\zeta_{\ell-1})}{u(1,\zeta_\ell-1)}u(\zeta_{\ell-1}+1,\zeta_\ell-1) = (u(\zeta_{\ell-1},1) - u(1,\zeta_{\ell-1})) - (u(\zeta_\ell,1)-u(1,\zeta_\ell)),
    \end{equation*}
    for all $1\leq \ell \leq L$, the difference between the incoming and outgoing probability current from $\zeta$ is $0$.
    So, $\zeta$ satisfies the master equation for probability distribution $\sigma$.

    For $N \geq L-1$, consider a configuration $\xi \in \Omega_{L,N}$, which contains a single site that does not have any particles.
    We first consider the case when $L \geq 3$.
    By considering a cyclic relabelling of the sites, using \cref{prop:trans-inv}, say that site $2$ is empty, and sites $1$ and $3$ contain $m$ particles and $n$ particles, respectively.
    So, $\xi = (m,0,n,\xi_4,\dots,\xi_L)$.

    Now, in any transition out of $\xi$ involving site $2$, a particle moves to site $2$.
    So, the only possible transition from $\xi$ involving site $2$ is the transition to $\xi^{1,2}$.
    So, as $u(\cdot,0)$ is a constant function and $f(1) = u(1,0) = u(1,0)f(0)$, the outgoing probability current due to transitions involving site $2$ is
    \begin{equation*}
        u(m,0)\sigma(\xi) = u(1,0)\dfrac{1}{Z_{L,N}}f(m)f(0)f(n) \prod_{\ell =4}^L f(\xi_\ell) = \dfrac{Z_{L,N+1}}{Z_{L,N}} \sigma (\xi^2).
    \end{equation*}    
    In any transition into $\xi$ involving site $2$, a particle moves from site $2$.
    Then, this transition is from the configuration $\xi^{3,2}$.
    So, as $f(n) = u(1,n-1)f(n-1)$ by \eqref{eqn:hpalrmp-factorised-weight}, the incoming probability current due to transitions involving site $2$ is
    \begin{equation*}
        u(1,n-1)\sigma(\xi^{3,2}) = u(1,n-1)\frac{1}{Z_{L,N}}f(m)f(1)f(n-1)\prod_{\ell =4}^L f(\xi_\ell) = \dfrac{Z_{L,N+1}}{Z_{L,N}} \sigma (\xi^2).
    \end{equation*}
    When $L=2$, we can show that the probability currents are equal in a similar manner.
    This shows the converse, and hence completes the proof.
\end{proof}

\section{Homogeneous partially asymmetric long range misanthrope process}
\label{sec:hpalrmp}

\begin{lem}
    \label{lem:halrmp-iff-htalrmp}
    The stationary distribution corresponding to $u$ is factorised in the homogeneous TALRMP if and only if the stationary distribution corresponding to $u$ is factorised in the homogeneous PALRMP for some $q \neq 0,1$.
\end{lem}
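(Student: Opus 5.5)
The plan is to transfer the proof of \cref{lem:alrmp-iff-talrmp} verbatim to the homogeneous setting, setting every rate parameter $x_\ell=1$. The key simplification is that reversing a configuration no longer requires reversing the rate parameters: all sites carry the same parameter, so the reversal $\teta=(\eta_L,\dots,\eta_1)$ lives in the same homogeneous system. As before, for a probability distribution $\sigma$ on $\Omega_{L,N}$ and $\xi\in\Omega_{L,N}$, write $\text{IRC}_\sigma(\xi),\text{ORC}_\sigma(\xi)$ for the incoming and outgoing currents due to rightward moves, and $\text{ILC}_\sigma(\xi),\text{OLC}_\sigma(\xi)$ for those due to leftward moves.

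For the forward direction, suppose $\pi$ is the factorised stationary distribution of the homogeneous TALRMP, with one-point function $f$. Since $\pi(\eta)=Z_{L,N}^{-1}\prod_\ell f(\eta_\ell)$ is invariant under permuting sites, $\pi(\teta)=\pi(\eta)$, and more generally $\pi$ commutes with the reversal map. The reversal gives a rate-preserving bijection between leftward transitions out of (resp. into) $\eta$ and rightward transitions out of (resp. into) $\teta$: a left move $k\to\ell$ corresponds to a right move $L+1-k\to L+1-\ell$, and the long-range emptiness condition is mirrored. Hence
\[
\text{OLC}_\pi(\eta)=q\,\text{ORC}_\pi(\teta), \qquad \text{ILC}_\pi(\eta)=q\,\text{IRC}_\pi(\teta).
\]
The TALRMP master equations for $\eta$ and for $\teta$ give $\text{IRC}_\pi-\text{ORC}_\pi=0$ at both configurations. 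Adding these shows that $\eta$ satisfies the PALRMP master equation for every $q$. By irreducibility (\cref{prop:irred-LRMP}), $\pi$ is therefore the stationary distribution of the homogeneous PALRMP, and in particular for any $q\neq0,1$.

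For the converse, fix $q\neq0,1$ and let $\pi$ be the factorised stationary distribution of the homogeneous PALRMP. The same bijection and the invariance $\pi(\teta)=\pi(\eta)$ (which again holds because $\pi$ is a symmetric product) turn the master equations for $\eta$ and for $\teta$ into a linear system in $A=\text{IRC}(\eta)-\text{ORC}(\eta)$ and $B=\text{IRC}(\teta)-\text{ORC}(\teta)$:
\[
A+qB=0, \qquad qA+B=0.
\]
The determinant of this system is $1-q^2\neq0$, since $q\ge0$ and $q\neq1$. Therefore $A=B=0$, so every configuration satisfies the TALRMP master equation under $\pi$, and $\pi$ is the factorised stationary distribution of the homogeneous TALRMP.

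The only step needing care is checking that $\pi\circ(\text{reversal})=\pi$ and that the bijection preserves rates in the homogeneous model. In the inhomogeneous proof this required modifying the site parameters; here it is immediate because every $x_\ell=1$ and $f$ does not depend on the site. I do not expect any further obstacle.
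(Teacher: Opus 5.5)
Your proposal is correct and takes the same approach as the paper, whose proof of this lemma simply says it mirrors that of \cref{lem:alrmp-iff-talrmp}. Your remark that the reversal $\teta$ stays inside the same homogeneous system, so no relabelling of rate parameters is needed, is exactly what makes the transfer immediate; the same $1-q^2\neq 0$ linear-system argument then handles the converse.
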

\begin{proof}
    The proof is similar to that of \cref{lem:alrmp-iff-talrmp}.
\end{proof}

\begin{proof}[Proof of \cref{thm:hpalrmp-factorised}(1)]
    By \cref{lem:halrmp-iff-htalrmp}, for $q\neq 0,1$, the stationary distribution corresponding to $u$ is factorised in the homogeneous PALRMP if and only if the stationary distribution corresponding to $u$ is factorised in the homogeneous TALRMP, that is, $u$ satisfies \eqref{eqn:hpalrmp-factorised-cond}, with the one-point function \eqref{eqn:hpalrmp-factorised-weight}.
    
    The converse, where we have to show that the probability distribution due to $f$ given by \eqref{eqn:hpalrmp-factorised-weight}, according to \cref{rem:one fn}, is indeed a stationary distribution, is also proven by \cref{lem:halrmp-iff-htalrmp} and \cref{thm:hpalrmp-factorised}, as the one-point functions are the same.
    This completes the proof.
\end{proof}

We now proceed to the proof of \cref{thm:hpalrmp-factorised}(2).

\begin{prop}
    \label{prop:hpalrmp-consistent}
    Let $b : \N_0 \to \R\setminus\{0\}$ and $c: \N_0 \to \R$, such that $c(1) = 0$.
    Then, for all $n \geq 1$,
    \[
    	\sum_{\ell=0}^{n-1}\bigg((c(1+\ell) - c(n-\ell))\prod_{k=1}^\ell \dfrac{b(k)}{b(n-k)}\bigg) = 0.
    \]
\end{prop}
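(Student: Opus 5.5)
The plan is to pair the term indexed by $\ell$ with the term indexed by $\ell' = n-1-\ell$ and show that the two cancel. Write
\[
P_\ell = \prod_{k=1}^\ell \dfrac{b(k)}{b(n-k)}, \qquad d_\ell = c(1+\ell) - c(n-\ell),
\]
so that the sum is $\sum_{\ell=0}^{n-1} d_\ell P_\ell$. Under $\ell \mapsto n-1-\ell$ the coefficient transforms as
\[
d_{n-1-\ell} = c(n-\ell) - c(1+\ell) = -d_\ell .
\]
So it suffices to prove $P_{n-1-\ell} = P_\ell$ for every $0 \le \ell \le n-1$; then the terms for $\ell$ and $n-1-\ell$ cancel in pairs.

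When $n$ is odd there is a self-paired index $\ell = (n-1)/2$. There $d_\ell = c((n+1)/2) - c((n+1)/2) = 0$, so that term vanishes on its own. The hypothesis $c(1)=0$ is therefore not needed by this argument, though it is harmless. The case $n=1$ is the single term $\ell=0$ with $d_0 = c(1)-c(1) = 0$, and is covered by the same remark.

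The main step is the identity $P_{n-1-\ell} = P_\ell$. Write $P_\ell$ as a ratio of products:
\[
P_\ell = \dfrac{\prod_{k=1}^{\ell} b(k)}{\prod_{k=n-\ell}^{n-1} b(k)} .
\]
The first product runs over the indices $1,\dots,\ell$ and the second over the last $\ell$ indices $n-\ell,\dots,n-1$. For $\ell' = n-1-\ell$ the same formula gives
\[
P_{\ell'} = \dfrac{\prod_{k=1}^{n-1-\ell} b(k)}{\prod_{k=\ell+1}^{n-1} b(k)} .
\]
Cross-multiplying, both $P_\ell$ and $P_{\ell'}$ equal
\[
\dfrac{\prod_{k=1}^{\ell} b(k)\prod_{k=1}^{n-1-\ell} b(k)}{\prod_{k=1}^{n-1} b(k)} .
\]
To see this, multiply the numerator and denominator of $P_\ell$ by $\prod_{k=1}^{n-1-\ell} b(k)$. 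The new denominator is $\prod_{k=1}^{n-\ell-1} b(k) \prod_{k=n-\ell}^{n-1} b(k) = \prod_{k=1}^{n-1} b(k)$, which gives the displayed expression. The same manipulation applied to $P_{\ell'}$ gives the same expression. This is where $b$ must be nonzero, so that the divisions are legitimate.

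I expect the only real obstacle to be the index bookkeeping in this product identity, in particular checking that the ranges $\{1,\dots,n-\ell-1\}$ and $\{n-\ell,\dots,n-1\}$ partition $\{1,\dots,n-1\}$. Once the identity holds, the antisymmetry of $d_\ell$ under $\ell \mapsto n-1-\ell$ finishes the proof. I would present it as reindexing the sum by $\ell \mapsto n-1-\ell$: this shows that the sum equals its own negative, hence is zero.
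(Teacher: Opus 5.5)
Your proof is correct and is essentially the paper's argument. The paper splits the sum into its $c(1+\ell)$ and $c(n-\ell)$ parts, reindexes both as sums of $c(\ell)$, and kills each coefficient using the identity $\prod_{k=1}^{\ell-1} \frac{b(k)}{b(n-k)} = \prod_{k=1}^{n-\ell} \frac{b(k)}{b(n-k)}$. That identity is your $P_j = P_{n-1-j}$, so your pairing under $\ell \mapsto n-1-\ell$ is the same cancellation organised term by term.

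Your remark that $c(1)=0$ is not needed is also right. The paper uses it only to discard the reindexed boundary terms $c(1)P_0$ and $c(1)P_{n-1}$, and these cancel anyway since $P_0 = P_{n-1} = 1$.
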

\begin{proof}
	When $n=1$, the sum has a single term corresponding to $\ell = 0$, and $c(1+\ell) - c(n-\ell) = 0$.
    So, we consider the case when $n\geq 2$.
    With some change of variables and using $c(1)=0$, we obtain
    \begin{equation*}
        \sum_{\ell=0}^{n-1}\bigg(c(1+\ell) \prod_{k=1}^\ell \dfrac{b(k)}{b(n-k)}\bigg) = \sum_{\ell=1}^{n}\bigg(c(\ell) \prod_{k=1}^{\ell-1} \dfrac{b(k)}{b(n-k)}\bigg) = \sum_{\ell=2}^{n}\bigg(c(\ell) \prod_{k=1}^{\ell-1} \dfrac{b(k)}{b(n-k)}\bigg),
    \end{equation*}
    and
    \begin{equation*}
        \sum_{\ell=0}^{n-1}\bigg(c(n-\ell) \prod_{k=1}^\ell \dfrac{b(k)}{b(n-k)}\bigg) = \sum_{\ell=1}^{n}\bigg(c(\ell) \prod_{k=1}^{n-\ell} \dfrac{b(k)}{b(n-k)}\bigg) = \sum_{\ell=2}^{n}\bigg(c(\ell) \prod_{k=1}^{n-\ell} \dfrac{b(k)}{b(n-k)}\bigg).
    \end{equation*}
    So,
    \begin{equation*}
        \sum_{\ell=0}^{n-1}\bigg((c(1+\ell) - c(n-\ell))\prod_{k=1}^\ell \dfrac{b(k)}{b(n-k)}\bigg) = \sum_{\ell=2}^{n}\bigg(c(\ell) \bigg(\prod_{k=1}^{\ell-1} \dfrac{b(k)}{b(n-k)} - \prod_{k=1}^{n-\ell} \dfrac{b(k)}{b(n-k)}\bigg)\bigg).
    \end{equation*}
    But, for all $\ell \geq 2$,
    \begin{multline*}
        \prod_{k=1}^{\ell-1} \dfrac{b(k)}{b(n-k)} - \prod_{k=1}^{n-\ell} \dfrac{b(k)}{b(n-k)}
        = \dfrac{\prod_{k=1}^{\ell-1} b(k)}{\prod_{k = n-\ell+1}^{n-1} b(k)} - \dfrac{\prod_{k=1}^{n-\ell}b(k)}{\prod_{k=\ell}^{n-1} b(k)}\\
        = \dfrac{\left(\prod_{k=1}^{\ell-1} b(k)\right)\left(\prod_{k=\ell}^{n-1} b(k)\right) - \left(\prod_{k=1}^{n-\ell}b(k)\right)\left(\prod_{k = n-\ell+1}^{n-1} b(k)\right)}{\left(\prod_{k = n-\ell+1}^{n-1} b(k)\right)\left(\prod_{k=\ell}^{n-1} b(k)\right)} = 0,
    \end{multline*}
    as both the terms in the numerator are the same, completing the proof.
\end{proof}

\begin{proof}[Proof of \cref{thm:hpalrmp-factorised}(2)]
    We first show that (a) and (b) are equivalent.
    Suppose that (a) holds.
	Consider a rate $u$ satisfying \eqref{eqn:hpalrmp-factorised-cond}, such that $u(m,0) = \alpha > 0$ for all $m \geq 1$. Define functions $a,b,c : \N_0 \to \R$ by
	\[
        a(m) = u(m,1), \quad b(m) = u(1,m), \quad 
        \text{and} \quad c(m) = a(m) - b(m).
	\]
	We note that $a(0) = u(0,1) = 0$, $b(0) = u(1,0) = \alpha$, $c(0) = a(0) - b(0) =  -\alpha$ and $c(1) = u(1,1) - u(1,1) = 0$.
    So $u(m,0) = b(0)$ for all $m \geq 1$.
    
	For $m,n \geq 1$, from \eqref{eqn:hpalrmp-factorised-cond-a},
	\begin{equation}
		\label{eqn:hpalrmp-form-u-bc}
		u(m,n) = \dfrac{b(m)}{b(n-1)}u(m+1,n-1)+c(m) - c(n).
	\end{equation}
    We note that for $n = 1$, in \eqref{eqn:hpalrmp-form-u-bc}, as $b(0) = u(m+1,0)$
    \begin{equation*}
        u(m,1) = \dfrac{b(m)}{b(0)}u(m+1,0)+c(m) - c(1) = b(m)+c(m) =a(m),
    \end{equation*}
    that is, an identity relation.    
	
	Now, for $m \geq 1$ and $n \geq 2$, applying \eqref{eqn:hpalrmp-form-u-bc} twice,
	\begin{align*}
		u(m,n)
		=& \dfrac{b(m)}{b(n-1)}u(m+1,n-1)+c(m) - c(n)\\
		=& \dfrac{b(m)}{b(n-1)}\dfrac{b(m+1)}{b(n-2)}u(m+2,n-2)\\
        &+\dfrac{b(m)}{b(n-1)}(c(m+1) - c(n-1)) + (c(m) - c(n)).
	\end{align*}
	Continuing in this manner, applying \eqref{eqn:hpalrmp-form-u-bc} $j$ times, for $m \geq 1$ and $n \geq j$, we obtain
	\begin{equation}
		\label{eqn:hpalrmp-form-of-u-bc-j}
        \begin{split}            
		      u(m,n) =& u(m+j,n-j) \prod_{k=1}^j\dfrac{b(m+k-1)}{b(n-k)} \\
              &+ \sum_{\ell=0}^{j-1} \bigg((c(m+\ell) - c(n-\ell))\prod_{k=1}^\ell\dfrac{b(m+k-1)}{b(n-k)}\bigg).
        \end{split}
	\end{equation}
	Considering $j = n$ in \eqref{eqn:hpalrmp-form-of-u-bc-j}, as $u(n+m,0) = u(1,0) = b(0)$, for $m,n \in \N$,
	\begin{equation*}
		u(m,n) = b(0)\prod_{k=1}^n\dfrac{b(m+k-1)}{b(n-k)} +\sum_{\ell=0}^{n-1}\bigg((c(m+\ell) - c(n-\ell))\prod_{k=1}^\ell \dfrac{b(m+k-1)}{b(n-k)}\bigg),
	\end{equation*}
    that is, \eqref{eqn:hpalrmp-form-u-bc-end}.

    We show that the system is consistent.
	For $m,n \in \N$, \eqref{eqn:hpalrmp-factorised-cond-a} gives us an expression for $u(m,n)$ based on $u(m+1,n-1)$.
	So, we will have an expression of $u(1,m+n-1) = b(m+n-1)$ based on these substitutions.
	For the system to be consistent, it is required that this expression that is obtained is the same as $b(m+n-1)$ itself, that is, for all $n \in \N_0$, the expression for $u(1,n)$ obtained by \eqref{eqn:hpalrmp-form-u-bc-end} is $b(n)$.
	Now, by \eqref{eqn:hpalrmp-form-u-bc-end}, for $m = 1$,
	\begin{equation}
        \label{eqn:htalrmp-to-show-consistent}
		u(1,n) = b(0)\prod_{k=1}^n\dfrac{b(k)}{b(n-k)} +\sum_{\ell=0}^{n-1}\bigg((c(1+\ell) - c(n-\ell))\prod_{k=1}^\ell \dfrac{b(k)}{b(n-k)}\bigg).
	\end{equation} 
	Now,
	\[
		b(0)\prod_{k=1}^n\dfrac{b(k)}{b(n-k)} = b(0) \dfrac{b(1)b(2)\cdots b(n)}{b(n-1)b(n-2)\cdots b(0)} = b(n),
	\]
    and the sum in the right hand side of \eqref{eqn:htalrmp-to-show-consistent} is $0$, by \cref{prop:hpalrmp-consistent}.
	Thus, the system is consistent.

    Suppose (b) holds.
    So, there exist such functions $b$ and $c$, and $u$ is defined by \eqref{eqn:hpalrmp-form-u-bc-end}.
    Then, $u$ satisfies \eqref{eqn:hpalrmp-factorised-cond}.
    Again, as $b(m) = u(1,m)$, \eqref{eqn:hpalrmp-form-u-bc-end-form-f} follows from \eqref{eqn:hpalrmp-factorised-weight}.
    This shows that (a) and (b) are equivalent.

    We now show that (a) and (c) are equivalent.
    Suppose condition (a) holds.
    Then, condition (b) also holds.
    We first show \eqref{eqn:hpalrmp-from-slrmp-b}. 
    For $n = m$, \eqref{eqn:hpalrmp-from-slrmp-b} is an identity.
    We can assume, without loss of generality, that $m < n$.
    For $j = n-m$ in \eqref{eqn:hpalrmp-form-of-u-bc-j},
    \begin{equation}
    \label{eqn:hpalrmp-umn-unm-intermediate}
    \begin{split}
        u(m,n) =& u(n,m) \prod_{k=1}^{n-m}\dfrac{b(m+k-1)}{b(n-k)} \\
        &+ \sum_{\ell=0}^{n-m-1} \bigg((c(m+\ell) - c(n-\ell))\prod_{k=1}^\ell\dfrac{b(m+k-1)}{b(n-k)}\bigg).
    \end{split}
    \end{equation}
    We note that the coefficient of $u(n,m)$ is $1$.
    Consider $1\leq \ell_1 < \ell_2 \leq n-m-1$, such that $\ell_1+\ell_2 = n-m$.
    Then,
    \begin{equation*}
        c(m+\ell_1) - c(n-\ell_1) = c(n-\ell_2) - c(m+\ell_2),
    \end{equation*}
    and
    \begin{equation*}
        \prod_{k=\ell_1+1}^{\ell_2}\dfrac{b(m+k-1)}{b(n-k)} = 1.
    \end{equation*}
    So,
    \begin{equation*}
        (c(m+\ell_1) - c(n-\ell_1)) \prod_{k=1}^{\ell_1}\dfrac{b(m+k-1)}{b(n-k)} = - (c(m+\ell_2) - c(n-\ell_2)) \prod_{k=1}^{\ell_2}\dfrac{b(m+k-1)}{b(n-k)}.
    \end{equation*}
    In the case when $n-m$ is even, then for $\ell = (n-m)/2$, the term is $0$.
    So,
    \begin{equation*}
        \sum_{\ell=1}^{n-m-1} \bigg((c(m+\ell) - c(n-\ell))\prod_{k=1}^\ell\dfrac{b(m+k-1)}{b(n-k)}\bigg) = 0.
    \end{equation*}
    Then only the $\ell = 0$ term in \eqref{eqn:hpalrmp-umn-unm-intermediate} contributes to the sum, giving
    \begin{equation*}
        u(m,n) = u(n,m) +c(m) - c(n),
    \end{equation*}
    showing that \eqref{eqn:hpalrmp-from-slrmp-b} holds.
    Now, for $m,n \in \N$, from \eqref{eqn:hpalrmp-factorised-cond-a}, we have an expression of $u(n,m)$.
    Substituting it back into \eqref{eqn:hpalrmp-factorised-cond-b}, we obtain \eqref{eqn:hpalrmp-from-slrmp-a}.

    We now consider the case (c) holds, that is, \eqref{eqn:hpalrmp-from-slrmp} is satisfied.
    For $m=1$ in \eqref{eqn:hpalrmp-from-slrmp-a}, we have $u(1,n)/u(1,n) = u(n+1,0)/u(1,0)$ for all $n \in \N$, that is \eqref{eqn:hpalrmp-factorised-cond-b}.
    Now, from \eqref{eqn:hpalrmp-from-slrmp-a}, for all $m,n \in \N$
    \begin{equation*}
        u(n,m) = \dfrac{u(1,m)}{u(1,n-1)}u(m+1,n-1),
    \end{equation*}
    which, upon substitution into \eqref{eqn:hpalrmp-from-slrmp-b} gives \eqref{eqn:hpalrmp-factorised-cond-a}.
\end{proof}

\begin{rem}
    We note that the conditions in \eqref{eqn:hpalrmp-from-slrmp} are similar to \cite[Equation 2.3 and 2.4(b)]{cocozza-1985}.
    However, \eqref{eqn:hpalrmp-from-slrmp-a} cannot be replaced by \cite[Equation 2.3]{cocozza-1985}, unless we have the additional condition in \eqref{eqn:hpalrmp-from-slrmp} that $u(m,0)$ is a constant function for $m \in \N$.
\end{rem}

\begin{lem}
	\label{lam:htalrmp-arb-b}
	Given any $b: \N_0 \to \R_{> 0}$, there exists $c: \N_0\to\R$, such that $u$ as defined in \eqref{eqn:hpalrmp-form-u-bc-end} is a hop rate, such that the stationary distribution corresponding to $u$ is factorised in the homogeneous PALRMP, with one-point function $f$ given by \eqref{eqn:hpalrmp-form-u-bc-end-form-f}.
\end{lem}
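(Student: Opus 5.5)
The plan is to make the simplest admissible choice $c\equiv 0$, check that the resulting $u$ is a genuine hop rate, and then apply the implication (b)$\Rightarrow$(a) of \cref{thm:hpalrmp-factorised}(2).

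\textbf{Choice of $c$ and the resulting rate.} Given $b:\N_0\to\R_{>0}$, I take $c(m)=0$ for all $m\in\N_0$; in particular $c(1)=0$, as \cref{thm:hpalrmp-factorised}(2)(b) requires. I define $u(m,0)=b(0)$ for $m\geq 1$, $u(0,n)=0$ for $n\in\N_0$, and for $m,n\in\N$ let $u(m,n)$ be given by \eqref{eqn:hpalrmp-form-u-bc-end}. Since $c\equiv 0$, the sum in \eqref{eqn:hpalrmp-form-u-bc-end} vanishes, leaving
\begin{equation*}
u(m,n)=b(0)\prod_{k=1}^n\frac{b(m+k-1)}{b(n-k)}.
\end{equation*}
Every factor is strictly positive, so $u(m,n)>0$ for all $m\in\N$ and $n\in\N_0$. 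Hence $u$ is a valid hop rate in the sense of \cref{sec:back}.

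\textbf{Consistency with the side identities in (b).} Part (b) also asserts $u(m,0)=b(0)$, $b(m)=u(1,m)$ and $c(m)=u(m,1)-u(1,m)$, so I will verify these directly.
\begin{enumerate}[leftmargin=*]
\item The identity $u(m,0)=b(0)$ holds by definition.
\item For $m=1$ the product telescopes, $b(0)\prod_{k=1}^n b(k)/b(n-k)=b(n)$, which gives $u(1,n)=b(n)$. This is exactly the consistency computation in the proof of \cref{thm:hpalrmp-factorised}(2), where the sum part vanishes by \cref{prop:hpalrmp-consistent}; here it vanishes trivially.
\item For $n=1$ we get $u(m,1)=b(0)\,b(m)/b(0)=b(m)=u(1,m)$. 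Hence $u(m,1)-u(1,m)=0=c(m)$ for $m\geq1$.
\end{enumerate}
The value $c(0)$ never enters \eqref{eqn:hpalrmp-form-u-bc-end}, since there the arguments satisfy $m+\ell\geq1$ and $n-\ell\geq1$. So its value is immaterial.

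\textbf{Conclusion and main obstacle.} By the equivalence (a)$\Leftrightarrow$(b) in \cref{thm:hpalrmp-factorised}(2), the stationary distribution for $u$ in the homogeneous PALRMP is factorised, with one-point function $f(n)=\prod_{i=1}^n b(i-1)$ as in \eqref{eqn:hpalrmp-form-u-bc-end-form-f}. Alternatively, one can check directly that $u$ satisfies \eqref{eqn:hpalrmp-factorised-cond}: the relation \eqref{eqn:hpalrmp-factorised-cond-a} reduces to the one-step recursion $u(m,n)=\frac{b(m)}{b(n-1)}u(m+1,n-1)$, which is immediate from the product formula, and \eqref{eqn:hpalrmp-factorised-cond-b} holds since $u(m,0)=b(0)$. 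The only point that needs care is positivity of $u$, because a general $c$ could make \eqref{eqn:hpalrmp-form-u-bc-end} negative. Choosing $c\equiv0$ removes this obstacle entirely.
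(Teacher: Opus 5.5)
Your proof is correct, and it takes a genuinely simpler route than the paper. The paper does not fix an explicit $c$. It sets $c(0)=-b(0)$, $c(1)=0$, $c(2)=1-b(2)$, and then constructs $c(p)$ inductively on $p=m+n-1$. For $m+n=p+1$ and $1\le n\le p-1$, the value $c(p)$ appears exactly once in $u(m,n)$, with positive coefficient $\prod_{k=1}^{n-1} b(m+k-1)/b(n-k)$. So positivity on that antidiagonal reduces to finitely many lower bounds on $c(p)$, while $u(1,p)=b(p)$ is automatic by \cref{prop:hpalrmp-consistent}.

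You instead observe that $c\equiv 0$ makes the sum in \eqref{eqn:hpalrmp-form-u-bc-end} vanish. This leaves the manifestly positive rate $u(m,n)=b(0)f(m+n)/(f(m)f(n))$, with $f$ as in \eqref{eqn:hpalrmp-form-u-bc-end-form-f}. After that, all you need is (b)$\Rightarrow$(a) of \cref{thm:hpalrmp-factorised}(2), or your direct check of \eqref{eqn:hpalrmp-factorised-cond}. Your remark that $c(0)$ never enters the formula is also right.

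What the paper's induction buys is flexibility: it shows there are infinitely many admissible $c$. That freedom is exactly what the proof of \cref{thm:system-arb-weight}(2) uses to choose $c(2)$ with $u(1,1)\neq u(2,1)$. Your choice forces $u(m,1)=b(m)$. So whenever $b$ is constant on $\N$ (e.g.\ geometric $g$), your rate has the form \eqref{eqn:palrmp-factorised-cond} and factorises in the inhomogeneous PALRMP as well. Your argument therefore suffices for the lemma as stated, but could not be used directly in that later application.
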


\begin{proof}
    If such a $c$ existed, then the corresponding rate would satisfy \eqref{eqn:hpalrmp-factorised-cond}, in which case, the stationary distribution would be factorised, and as $u(1,m) = b(m)$, as in the proof \cref{thm:hpalrmp-factorised}(2), the one-point function would be determined by \eqref{eqn:hpalrmp-form-u-bc-end-form-f}.
	So, we are required to show that there exists $c: \N_0\to \R$ such that $u(m,n) > 0$ for all $m \in \N_0$ and $n \in \N$.
    
	Define $c(0) = -b(0)$ and $c(1) = 0$.
	Now, for $n = 0$, by \eqref{eqn:hpalrmp-form-u-bc-end}, $u(m,0) = b(0) > 0$.
	So, we need only show that for all $m,n \in \N$, $u(m,n) > 0$.
	Again, $u(1,1) = b(1) > 0$.
    Now, by \eqref{eqn:hpalrmp-form-u-bc-end}, evaluating $u(1,2)$ and $u(2,1)$,
	\begin{align*}
		u(1,2) &= b(2),\\
		u(2,1) &= b(2)+c(2).
	\end{align*}
	As $b(2) > 0$, for $u(m,n)$ to be positive whenever $m,n \in \N$ and $m+n \leq 3$, we require $-b(2) < c(2)$.
	So, we can define $c(2) = 1-b(2)$.
    Note that the choice of $c(2)$ is not unique.
	
	From \eqref{eqn:hpalrmp-form-u-bc-end}, for evaluating $u(m,n)$, we require the values $c(1),c(2),\dots,c(m+n-1)$.
	For $p \geq 3$, suppose we have defined $c(1), c(2),\dots,c(p-1)$ such that for all $m,n \in \N$ and $m+n \leq p$, $u(m,n) > 0$.
	Now, for $m,n \in \N$ such that $m+n = p+1$, by \eqref{eqn:hpalrmp-form-u-bc-end},
	\begin{align*}
		u(m,n)
		=& b(0)\prod_{k=1}^n\dfrac{b(m+k-1)}{b(n-k)} +\sum_{\ell=0}^{n-2}\bigg((c(m+\ell) - c(n-\ell))\prod_{k=1}^\ell \dfrac{b(m+k-1)}{b(n-k)}\bigg)\\
        &+\bigg((c(m+n-1) - c(1))\prod_{k=1}^{n-1} \dfrac{b(m+k-1)}{b(n-k)}\bigg)
	\end{align*}
	Thus, for all $m,n \in \N$, such that $m+n = p+1$, as $c(1) = 0$,
	\begin{equation}
		\label{eqn:htalrmp-form-of-u-bc-H}
		\begin{split}
			u(m,n) =& \left(\prod_{k=1}^{n-1} \dfrac{b(m+k-1)}{b(n-k)}\right)c(p)+b(0) \prod_{k=1}^n \dfrac{b(m+k-1)}{b(n-k)}\\
			&+\sum_{\ell=0}^{n-2} \bigg((c(m+\ell) - c(n-\ell))\prod_{k=1}^\ell \dfrac{b(m+k-1)}{b(n-k)}\bigg).
		\end{split}
	\end{equation}
	For $m = 1$ and $n = p$ in \eqref{eqn:htalrmp-form-of-u-bc-H}, by \cref{prop:hpalrmp-consistent}, and calculations as in \cref{thm:hpalrmp-factorised}(2) following \eqref{eqn:htalrmp-to-show-consistent}, $u(1,p) = b(p)$, and so, $u(1,p) >0$.
    So, we consider the case when $n < p$.
    We note that for $m = p$ and $n = 1$ in \eqref{eqn:htalrmp-form-of-u-bc-H}
    \begin{equation*}
        u(p,1) = c(p)+b(0)\dfrac{b(p)}{b(0)} = c(p)+b(p)
    \end{equation*}
    So, in this case, we require $c(p) > -b(p)$.
	In general, $u(m,n) > 0$ for all $m,n \in \N$ and $m+n =p+1$ if and only if $c(p)$ satisfies $(p-1)$ inequalities, obtained by taking the values of $n$ from $1$ to $(p-1)$, which corresponds to taking values of $m$ from $p$ to $2$.
	If $2 \leq m \leq p$ and $0 \leq \ell \leq n-2$, then $m + \ell \leq  m+n-2 = p-1$.
    Again, as $n \leq p-1$, for $0 \leq \ell \leq n-2$, $n - \ell \leq p-1$.
	So, there is a single occurrence of $c(p)$, with a positive coefficient, in each of the $p - 1$ inequalities.
	So, each of these inequalities gives a lower bound for $c(p)$.
	Thus, there are infinitely many solutions of $c(p)$, given $c(0), c(1), \dots, c(p-1)$, such that $u(m,n) > 0$ for all $m, n \in \N$ and $m+n \leq p+1$.
	
	So, by induction, we can define $c: \N_0 \to \R$ such that $u(m,n) > 0$ for all $m \in \N$ and $n \in \N_0$.
	Following the proof of \cref{thm:hpalrmp-factorised}(2),
    the stationary distribution corresponding to $u$ in the homogeneous PALRMP is factorised and the corresponding one-point function is given by \eqref{eqn:hpalrmp-form-u-bc-end-form-f}.
	However, $c$ is not unique, as in each step of the induction process, we are choosing $c(p)$ to satisfy a system of inequalities, each of which has infinitely many solutions.
\end{proof}

\begin{proof}[Proof of \cref{thm:system-arb-weight}(2)]
    We note that when the stationary distribution corresponding to the rate function $u$ factorises, then $u(m,1)$, for $m \geq 1$, is a constant function.
    Again, from the proof of \cref{thm:hpalrmp-factorised}(2), $u(m,1) = b(m)+c(m)$.
    
	We first consider the case when $g(0) = 1$.
	Then, for $n \in \N_0$, let
    \begin{equation*}
        b(n) = \dfrac{g(n+1)}{g(n)}.
    \end{equation*}
	By \cref{lam:htalrmp-arb-b}, there exists $c:\N_0 \to \R$ such that, $c(0) = -b(0)$, $c(1) = 0$, and for the rate $u$ defined according to \eqref{eqn:hpalrmp-form-u-bc-end}, the stationary distribution corresponding to $u$ in the homogeneous PALRMP factorises.
    Now, we can choose $c(2)$, such that $b(1)+c(1) \neq b(2)+c(2),$ in which case $u(1,1) \neq u(2,1)$.
    So, the stationary distribution corresponding to $u$ in the PALRMP does not factorise.
    By \eqref{eqn:hpalrmp-form-u-bc-end-form-f}, the corresponding one-point function in the homogeneous PALRMP is
	\begin{equation*}
	    f(n) = \prod_{i=1}^{n} b(i-1) = \prod_{i=1}^{n}\dfrac{g(i)}{g(i-1)} = \dfrac{g(n)}{g(0)} = g(n).
	\end{equation*}
	So, in this case, the assertion holds.
	
	When $g(0) \neq 1$, we proceed as in the proof of \cref{thm:system-arb-weight}, utilising \cref{prop:weight-func-rescale}.
\end{proof}

\section{Homogeneous symmetric long range misanthrope process}
\label{sec:hslrmp}

\begin{lem}
	\label{lem:hslrmp-form-f}
	For $q=1$, suppose that the stationary distribution corresponding to $u$ is factorisable in the homogeneous SLRMP.
	Then, there exists a one-point function, $f$, corresponding to $u$ satisfying \eqref{eqn:hslrmp-factorised-weight}.
\end{lem}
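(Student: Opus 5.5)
We follow the argument of \cref{lem:slrmp-form-f}, specialised to all rate parameters equal to $1$. Let $\tf$ be a one-point function corresponding to $u$ in the homogeneous SLRMP and $\pi$ the stationary distribution. By \cref{prop:weight-func-rescale} we may assume $\tf(0)=1$. We then extract a recursion for $\tf$ from \cref{lem:me-factorised-stationary}(2), which applies verbatim in the homogeneous setting since its proof only uses the master equation and the correspondence of \cref{prop:mu(eta)-transition-correspond-current}.

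For $N\ge L-1$ and $L\ge 3$, take $\eta=(m,0,n,\eta_4,\dots,\eta_L)$ with $m,n\ge1$ and only site $2$ empty; this is legitimate by translation invariance (\cref{prop:trans-inv}). Since $q=1$, the transitions out of $\eta$ involving site $2$ go to $\eta^{1,2}$ and $\eta^{3,2}$, so the outgoing current is $\pi(\eta)(u(m,0)+u(n,0))$. The transitions into $\eta$ involving site $2$ come from $\eta^{3,2}$ and $\eta^{1,2}$, so the incoming current is
\begin{equation*}
\pi(\eta)\,\tf(1)\left(\dfrac{\tf(n-1)}{\tf(n)}u(1,n-1)+\dfrac{\tf(m-1)}{\tf(m)}u(1,m-1)\right).
\end{equation*}
Equate the two and set $m=n$; dividing by $2$ gives $u(n,0)=\tf(1)\,\tf(n-1)u(1,n-1)/\tf(n)$ for all $n\in\N$. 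Equivalently,
\begin{equation*}
\tf(n)=\tf(1)\,\dfrac{u(1,n-1)}{u(n,0)}\,\tf(n-1).
\end{equation*}

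Iterating this recursion with $\tf(0)=1$ yields $\tf(n)=\tf(1)^n\prod_{i=1}^n u(1,i-1)/u(i,0)$. Applying \cref{prop:weight-func-rescale} with $a=1$ and $b=1/\tf(1)$ then shows that $f(n)=\tf(n)/\tf(1)^n$ is a one-point function corresponding to $u$ and satisfies \eqref{eqn:hslrmp-factorised-weight}.

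The only point needing care is that the configuration used must exist, which requires $L\ge3$. Taking $L=3$ and $N\ge2$ already covers every $n\ge1$, since we only need $m=n$ particles at sites $1$ and $3$. The main, if mild, obstacle is confirming that \cref{lem:me-factorised-stationary}(2) is available in the homogeneous model. The homogeneous process is the special case $x_\ell\equiv1$ of the PALRMP, so all of \cref{sec:mastereq} applies without change.
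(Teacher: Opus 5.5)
Your proof is correct and follows the paper's intended argument: the paper proves this lemma only by deferring to \cref{lem:slrmp-form-f}, which applies \cref{lem:me-factorised-stationary}(2) to $(m,0,n,\eta_4,\dots,\eta_L)$, sets $m=n$, solves the resulting recursion and rescales via \cref{prop:weight-func-rescale}, exactly as you do, with your $\tf(1)$ (after normalising $\tf(0)=1$) playing the role of the paper's $h(1)$. For each $n$ the configuration $(n,0,n)\in\Omega_{3,2n}$ suffices, which is the precise content of your remark that $L=3$ is enough.
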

\begin{proof}
	The proof is similar to that of \cref{lem:slrmp-form-f}.
\end{proof}

\begin{lem}
	\label{lem:hslrmp-simplify-u}
	Suppose that $u$ is a rate function, such that for all $n,m \in \N$,
	\begin{equation}
		\label{eqn:hslrmp-simplify-u-initial}
		\begin{split}
            u(m,n)+u(n,m) = &\dfrac{u(1,n)}{u(n+1,0)}\dfrac{u(m,0)}{u(1,m-1)}u(n+1,m-1)\\
            &+\dfrac{u(1,m)}{u(m+1,0)}\dfrac{u(n,0)}{u(1,n-1)}u(m+1,n-1).
        \end{split}
	\end{equation}
	Then, $u$ satisfies \eqref{eqn:(h)slrmp-factorised-cond}.
\end{lem}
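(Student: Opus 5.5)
The plan is to absorb the prefactors in \eqref{eqn:hslrmp-simplify-u-initial} into a single rescaled rate, turning the hypothesis into an antisymmetry statement, and then propagate along antidiagonals $m+n=\text{const}$.

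\textbf{Step 1: rescale.} Set
\[
f(n)=\prod_{i=1}^n \dfrac{u(1,i-1)}{u(i,0)},
\]
as in \eqref{eqn:hslrmp-factorised-weight}. Then $f(0)=f(1)=1$ and $f(n+1)/f(n)=u(1,n)/u(n+1,0)$. Define, for $m\in\N$ and $n\in\N_0$,
\[
w(m,n)=f(m)f(n)\,u(m,n).
\]
The first term on the right of \eqref{eqn:hslrmp-simplify-u-initial} is
\[
\dfrac{f(n+1)}{f(n)}\,\dfrac{f(m-1)}{f(m)}\,u(n+1,m-1),
\]
so after multiplying by $f(m)f(n)$ it becomes $w(n+1,m-1)$. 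The second term is the same with $m,n$ swapped, so it becomes $w(m+1,n-1)$. Similarly, \eqref{eqn:(h)slrmp-factorised-cond} is equivalent to
\[
w(m,n)=w(n+1,m-1) \quad \text{for all } m,n\in\N .
\]

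\textbf{Step 2: reformulate.} Let $E(m,n)=w(m,n)-w(n+1,m-1)$ for $m\in\N$, $n\in\N_0$. The map $\sigma(m,n)=(n+1,m-1)$ is an involution on $\N\times\N_0$, so $E(\sigma(m,n))=-E(m,n)$. The hypothesis says exactly
\[
E(m,n)+E(n,m)=0 \quad \text{for } m,n\in\N .
\]
Combining the two antisymmetries gives
\[
E(m,n)=-E(n,m)=E(\sigma(n,m))=E(m+1,n-1) \quad \text{for all } m,n\in\N .
\]
Hence $E$ is constant along each antidiagonal $m+n=p$ with $m,n\geq 1$. Walking down in $m$, this gives $E(m,n)=E(1,m+n-1)$.

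\textbf{Step 3: the base case.} It remains to check $E(1,k)=0$ for $k\in\N$, i.e. $f(1)f(k)u(1,k)=f(k+1)u(k+1,0)$. This is immediate from $f(1)=1$ and the recursion $f(k+1)=f(k)u(1,k)/u(k+1,0)$. Therefore $E\equiv 0$ on $\N\times\N$, which is \eqref{eqn:(h)slrmp-factorised-cond}.

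The only genuinely non-obvious point is the base case in Step 3. The two antisymmetries alone, under swap and under $\sigma$, would allow $E$ to be any function constant on antidiagonals. The specific normalisation by $f$ is what makes the boundary value $E(1,k)$ vanish identically, so the choice of rescaling in Step 1 should be made with this boundary check in mind. I also need to confirm the index ranges: each use of the hypothesis is at a pair $(n,m)$ with $m,n\geq1$, and each use of $\sigma$ stays in $\N\times\N_0$, so the chain in Step 2 is legitimate.
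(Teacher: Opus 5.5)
Your proof is correct, but it takes a different route from the paper's. The paper works with the unnormalised quantity $h(n,m)=u(m,n)-\frac{u(1,n)}{u(n+1,0)}\frac{u(m,0)}{u(1,m-1)}u(n+1,m-1)$, which is your $E(m,n)/(f(m)f(n))$. From it the paper derives the swap relation $h(n,m)+h(m,n)=0$ and a \emph{weighted} version of your $\sigma$-antisymmetry. It then collects both families of relations on each antidiagonal $m+n=p$ into a matrix $M_p$. It solves this by back-substitution, starting from the row at the fixed point of one of the two reflections: $h(p/2,p/2)$ for $p$ even and $h(\frac{p-1}{2},\frac{p+1}{2})$ for $p$ odd, where a coefficient $2$ appears. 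From there it applies the two relations alternately, working outward.

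You instead rescale by the one-point weight $f(m)f(n)$. This turns both relations into plain antisymmetries under two involutions of the antidiagonal, so their composite is a unit shift. The matrix bookkeeping then collapses to one line, and the parity case split disappears. Your version is more transparent about why the lemma holds.

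There is one correction to your closing remark. Step 3 is not needed, and the two antisymmetries do not allow an arbitrary function that is constant on antidiagonals. Once Step 2 gives $E(m,n)=E(1,m+n-1)$ on $\N\times\N$, you get $E(n,m)=E(m,n)$. Combined with $E(m,n)+E(n,m)=0$, this gives $E(m,n)=0$ immediately. Equivalently, each reflection maps the antidiagonal to itself and reverses the sign of the constant; this is the content of the paper's anchoring at a fixed point.

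Your base case $E(1,k)=0$ is true, but it only says that the target identity is a tautology at $m=1$. It would hold for any rescaling $g(0)f$ that makes the hypothesis unweighted. So the role of $f$ is to remove the weights from the relations, not to make the boundary value vanish.
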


\begin{proof}
	For $n,m \in \N$, let
	\begin{equation}
		\label{eqn:hslrmp-simplify-u-def-H}
		h(n,m) = u(m,n) - \dfrac{u(1,n)}{u(n+1,0)}\dfrac{u(m,0)}{u(1,m-1)}u(n+1,m-1).
	\end{equation}
	We are required to show that $h$ is the constant $0$ function.
	
	By \eqref{eqn:hslrmp-simplify-u-initial}, for all $n,m \geq 1$,
	\begin{equation}		
		\label{eqn:hslrmp-simplify-u-symm-rel}
		h(n,m) + h(m,n) = 0.
	\end{equation}
	For $m \geq 2$, \eqref{eqn:hslrmp-simplify-u-def-H} gives
	\begin{align*}
		\dfrac{u(n+1,0)}{u(1,n)}\dfrac{u(1,m-1)}{u(m,0)}h(n,m)
		&= \dfrac{u(n+1,0)}{u(1,n)}\dfrac{u(1,m-1)}{u(m,0)}u(m,n) - u(n+1,m-1)\\
		&= -h(m-1,n+1),
	\end{align*}
	and so, for all $m \geq 2$ and $n \in \N$,
	\begin{equation}
		\label{eqn:hslrmp-simplify-u-non-symm-rel}
		\dfrac{h(n,m)}{u(m,0)u(1,n)}+\dfrac{h(m-1,n+1)}{u(n+1,0)u(1,m-1)} = 0.
	\end{equation}
	We note that in both the relations \eqref{eqn:hslrmp-simplify-u-symm-rel} and \eqref{eqn:hslrmp-simplify-u-non-symm-rel}, the sum of their coordinates of $h$ is $n+m$.
	So, we can group these relations by the sum of the coordinates.
    Consider that the sum of the coordinates is $p$.
	Then, it suffices to show that $h(1,p-1), h(2,p-2),\dots,h(p-1,1)$ are $0$ for all $p \geq 2$.
	We will only consider the relations in \eqref{eqn:hslrmp-simplify-u-symm-rel} where $n \leq m$ and the relations in \eqref{eqn:hslrmp-simplify-u-non-symm-rel} where $n \leq m-1$, and refer to them by their $n$ values, which, by our choice, is the smaller of the first coordinates of the relations. For example, we shall refer to $h(4,2)+h(2,4) = 0$ as the second relation due to \eqref{eqn:hslrmp-simplify-u-symm-rel}, as $2$ is the smaller of the first co-ordinates.
	
	We first consider the case when $p$ is odd.
	There are $(p-1)/2$ relations due to \eqref{eqn:hslrmp-simplify-u-symm-rel} and \eqref{eqn:hslrmp-simplify-u-non-symm-rel}, where $n$ varies from $1$ to $(p-1)/2$ in both cases.
	Let
	\begin{equation*}
		\ul v = (h(1,p-1),h(2,p-2),\dots, h(p-2,2),h(p-1,1)) \in \R^{p-1}.
	\end{equation*}
	Now, the relations due to \eqref{eqn:hslrmp-simplify-u-symm-rel} and \eqref{eqn:hslrmp-simplify-u-non-symm-rel} imply that $\ul v$ is orthogonal to certain vectors.
	We represent these relations using a matrix, $M_p \in \R^{(p-1)\times (p-1)}$, for which $\ul v \in \ker M_p$, defined as follows.
	For $1\leq i \leq (p-1)/2$, let the $(2i-1)$'th row correspond to \eqref{eqn:hslrmp-simplify-u-symm-rel} with $n = i$, and the $(2i)$'th row correspond to \eqref{eqn:hslrmp-simplify-u-non-symm-rel} with $n = i$.
	For example, when $p = 7$, we have $3$ relations due to \eqref{eqn:hslrmp-simplify-u-symm-rel} and \eqref{eqn:hslrmp-simplify-u-non-symm-rel} each, and so,
	\begin{equation*}
		M_7 = 
		\begin{pmatrix}
			1 & 0 & 0 & 0 & 0 & 1\\
			\Scale[0.75]{\dfrac{1}{u(1,1)u(6,0)}} & 0 & 0 & 0 & \Scale[0.75]{\dfrac{1}{u(1,5)u(2,0)}} & 0\\
			0 & 1 & 0 & 0 & 1 & 0\\
			0 & \Scale[0.75]{\dfrac{1}{u(1,2)u(5,0)}} & 0 & \Scale[0.75]{\dfrac{1}{u(1,4)u(3,0)}} & 0 & 0\\
			0 & 0 & 1 & 1 & 0 & 0\\
			0 & 0 & \Scale[0.75]{\dfrac{2}{u(1,3)u(4,0)}} & 0 & 0 & 0
		\end{pmatrix}.
	\end{equation*}
	In general, when $p$ is odd,
	\begin{equation*}
		M_p = 
		\begin{pmatrix}
			1 & 0 & 0 & \cdots & 0 & 0 & \cdots & 0 & 0 & 1\\
			\Scale[0.75]{\dfrac{1}{u(1,1)u(p-1,0)}} & 0 & 0 & \cdots & 0 & 0 & \cdots & 0 & \Scale[0.75]{\dfrac{1}{u(1,p-2)u(2,0)}} & 0\\
			0 & 1 & 0 & \cdots & 0 & 0 & \cdots & 0 & 1 & 0\\
			\vdots & \vdots & \vdots &  & \vdots & \vdots &  & \vdots & \vdots & \vdots\\
			0 & 0 & 0 & \cdots & 1 & 1 & \cdots & 0 & 0 & 0\\
			0 & 0 & 0 & \cdots & \Scale[0.75]{\dfrac{2}{u(1,\frac{p-1}{2})u(\frac{p+1}{2},0)}} & 0 & \cdots & 0 & 0 & 0
		\end{pmatrix}.
	\end{equation*}
	The columns specified in the middle of the above matrix are the $(p-1)/2$'th and $(p+1)/2$'th columns of $M_p$.
	From the last row, that is the $(p-1)$'th row, of $M_p$, we note that $h((p-1)/2,(p+1)/2) = 0$.
	Now, applying this to the $(p-2)$'th row, $h((p+1)/2,(p-1)/2) = 0$.
	Continuing in this manner, for all $1\leq n \leq p-1$, $h(n,p-n) = 0$.
	
	We now consider the case when $p$ is even.
	In this case, there are $p/2$ relations due to \eqref{eqn:hslrmp-simplify-u-symm-rel}, where $n$ varies from $1$ to $p/2$, and $(p-2)/2$ relations due to \eqref{eqn:hslrmp-simplify-u-non-symm-rel}, where $n$ varies from $1$ to $(p-2)/2$.
	We define the corresponding matrix $M_p$, such that the $(2i-1)$'th row of $M_p$ corresponds to \eqref{eqn:hslrmp-simplify-u-symm-rel} with $n = i$, and the $(2i)$'th row corresponds to \eqref{eqn:hslrmp-simplify-u-non-symm-rel} with $n = i$.
	For example, when $p = 6$,
	\begin{equation*}
		M_6 =
		\begin{pmatrix}
			1 & 0 & 0 & 0 & 1\\
			\Scale[0.75]{\dfrac{1}{u(1,1)u(0,5)}} & 0 & 0 & \Scale[0.75]{\dfrac{1}{u(4,1)u(0,2)}} & 0\\
			0 & 1 & 0 & 1 & 0\\
			0 & \Scale[0.75]{\dfrac{1}{u(2,1)u(0,4)}} & \Scale[0.75]{\dfrac{1}{u(3,1)u(0,3)}} & 0 & 0\\
			0 & 0 & 2 & 0 & 0
		\end{pmatrix}.
	\end{equation*}
	In general, when $p$ is even,
	\begin{equation*}
		M_p = 
		\begin{pmatrix}
			1 & 0 & \cdots & 0 & 0 & \cdots & 0 & 1\\
			\Scale[0.75]{\dfrac{1}{u(1,1)u(0,p-1)}} & 0 & \cdots & 0 & 0 & \cdots & \Scale[0.75]{\dfrac{1}{u(p-2,1)u(0,2)}} & 0\\
			0 & 1 & \cdots & 0 & 0 & \cdots & 1 & 0\\
			\vdots & \vdots &  & \vdots & \vdots & & \vdots & \vdots\\
			0 & 0 & \cdots & \Scale[0.75]{\dfrac{1}{u(\frac{p-2}{2},1)u(0,\frac{p+2}{2})}} & \Scale[0.75]{\dfrac{1}{u(\frac{p}{2},1)u(0,\frac{p}{2})}} & \cdots & 0 & 0\\
			0 & 0 & \cdots & 0 & 2 & \cdots & 0 & 0			
		\end{pmatrix}.
	\end{equation*}
	The columns specified in the middle of the above matrix are the $(p-2)/2$'th and $p/2$'th columns.
	From the last row, $h(p/2,p/2) = 0$, and following the same argument as before, $h(n,p-n) = 0$ for all $1\leq n \leq p-1$.
	Thus, $h$ is the constant zero function, proving the claim.
\end{proof}

\begin{proof}[Proof of \cref{thm:(h)slrmp-factorised} for homogeneous SLRMP]
	We consider the case when the stationary corresponding to $u$ is factorised, with one-point function $f$ satisfying the condition in \cref{lem:hslrmp-form-f}, namely \eqref{eqn:hslrmp-factorised-weight}, and $\pi$ the stationary distribution.
	
	For $N \geq L$, consider a configuration $\eta \in \Omega_{L,N}$, all of whose sites contain at least one particle.
	Then, by \eqref{eqn:palrmp-out-cur} the stationary outgoing probability current is given by
	\begin{equation*}
		\pi(\eta)\sum_{\ell=1}^L( u(\eta_{\ell-1},\eta_\ell)+u(\eta_{\ell+1},\eta_\ell)) = \pi(\eta)\sum_{\ell=1}^L (u(\eta_{\ell-1},\eta_\ell)+u(\eta_\ell,\eta_{\ell-1})).
	\end{equation*}
	By \eqref{eqn:palrmp-in-cur}, after expanding $f$ by \eqref{eqn:hslrmp-factorised-weight} and rearranging terms, the stationary incoming probability current is
	\begin{align*}
		\pi(\eta)\sum_{\ell=1}^L \bigg(& \dfrac{u(1,\eta_{\ell-1})}{u(\eta_{\ell-1}+1,0)}\dfrac{u(\eta_\ell,0)}{u(1,\eta_\ell-1)}u(\eta_{\ell-1}+1,\eta_\ell-1)\\
		&+\dfrac{u(\eta_{\ell-1},0)}{u(1,\eta_{\ell-1}-1)}\dfrac{u(1,\eta_{\ell})}{u(\eta_{\ell}+1,0)}u(\eta_{\ell}+1,\eta_{\ell-1}-1)\bigg).
	\end{align*}
	Let $G(m,n) = u(m,n) - \dfrac{u(1,n)}{u(n+1,0)}\dfrac{u(m,0)}{u(1,m-1)}u(n+1,m-1)$.
	Then, from the master equation,
	\begin{equation*}
		\sum_{\ell=1}^L (G(\eta_\ell,\eta_{\ell-1})+G(\eta_{\ell-1},\eta_\ell)) = 0.
	\end{equation*}
	So, by \cref{cor:circular-sum-2-real}, there exists $h:\N\to \R$, such that $G(n,m)+G(m,n) = h(n) - h(m)$.
	But, then $h(n) - h(m) = h(m) - h(n)$ for all $n,m \in \N$, that is $h$ is a constant function, and $G(n,m)+G(m,n) = 0$ for all $n,m \in \N$.
	So, for all $n,m \in \N$,
	\begin{equation*}
        \begin{split}
            u(m,n)+u(n,m) = &\dfrac{u(1,n)}{u(n+1,0)}\dfrac{u(m,0)}{u(1,m-1)}u(n+1,m-1)\\
            &+\dfrac{u(1,m)}{u(m+1,0)}\dfrac{u(n,0)}{u(1,n-1)}u(m+1,n-1).
        \end{split}
	\end{equation*}
	This is the same as \eqref{eqn:hslrmp-simplify-u-initial}.
	Thus, by \cref{lem:hslrmp-simplify-u}, we have that $u$ satisfies \eqref{eqn:(h)slrmp-factorised-cond}.
	
    We now prove the converse.
    Consider the case when $u$ satisfies \eqref{eqn:(h)slrmp-factorised-cond}.
    Now, the one-point function in the SLRMP, \eqref{eqn:slrmp-factorised-weight}, when all the rate parameters are $1$, and the one-point function of the homogeneous SLRMP, \eqref{eqn:hslrmp-factorised-weight}, are the same.
    So, the corresponding probability distributions are the same.
    Again, the conditions that $u$ must satisfy in the cases of the SLRMP and the homogeneous SLRMP, \eqref{eqn:(h)slrmp-factorised-cond}, are the same.
    Thus, by \cref{thm:(h)slrmp-factorised} for the SLRMP, the $f$ as given in \eqref{eqn:hslrmp-factorised-weight} is a one-point function corresponding to $u$, and the corresponding distribution is the stationary distribution.
    This completes the proof.
\end{proof}

\begin{cor}
    \label{cor:(h)slrmp-special-u}
    Consider $\vp, \psi: \N_0 \to \R_{> 0}$, where $\vp(0) = 0$, and consider the rate function $u$, such that for all $m,n \in \N_0$,
    \begin{equation}
        \label{eqn:(h)slrmp-special-u}
        u(m,n) = \vp(m)\psi(n).
    \end{equation}
    Then, $u$ satisfies \eqref{eqn:(h)slrmp-factorised-cond}, that is, the stationary distribution corresponding to $u$ in the SLRMP and the homogeneous SLRMP factorises.
    The one-point function corresponding to $u$ in the homogeneous SLRMP is given by
    \begin{equation}
        \label{eqn:(h)slrmp-special-u-weight}
        f(n) = \prod_{i=1}^n \dfrac{\psi(i-1)}{\vp(i)}.
    \end{equation}
\end{cor}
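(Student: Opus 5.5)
The plan is to verify \eqref{eqn:(h)slrmp-factorised-cond} directly by substituting the product form \eqref{eqn:(h)slrmp-special-u}, and then read off the one-point function from \cref{thm:(h)slrmp-factorised}.

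Fix $m,n \in \N$. Since $\vp$ and $\psi$ are positive on the relevant arguments ($m,n+1\ge 1$ for $\vp$, all arguments of $\psi$ in $\N_0$), every quantity in \eqref{eqn:(h)slrmp-factorised-cond} is nonzero and the ratios are well defined. Substituting, the left-hand side of \eqref{eqn:(h)slrmp-factorised-cond} becomes
\[
\frac{\vp(m)\psi(n)}{\vp(m)\psi(0)\,\vp(1)\psi(n)} = \frac{1}{\psi(0)\vp(1)}.
\]
The right-hand side becomes
\[
\frac{\vp(n+1)\psi(m-1)}{\vp(n+1)\psi(0)\,\vp(1)\psi(m-1)} = \frac{1}{\psi(0)\vp(1)}.
\]
Both sides are the same constant, so \eqref{eqn:(h)slrmp-factorised-cond} holds for all $m,n\in\N$. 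Then \cref{thm:(h)slrmp-factorised} gives factorisation in both the SLRMP and the homogeneous SLRMP.

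For the one-point function, substitute into \eqref{eqn:hslrmp-factorised-weight}:
\[
\frac{u(1,i-1)}{u(i,0)} = \frac{\vp(1)\psi(i-1)}{\vp(i)\psi(0)}.
\]
Hence $f(n) = \bigl(\vp(1)/\psi(0)\bigr)^n \prod_{i=1}^n \psi(i-1)/\vp(i)$. By \cref{prop:weight-func-rescale} with $a=1$ and $b=\psi(0)/\vp(1)$, the geometric factor can be dropped, which yields \eqref{eqn:(h)slrmp-special-u-weight}.

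There is no genuine obstacle here. The only points needing care are that $\vp(0)=0$ is consistent with the convention $u(0,n)=0$, and that positivity of $\vp$ on $\N$ (the hypothesis $\vp:\N_0\to\R_{>0}$ should be read as positive off $0$) justifies the divisions.
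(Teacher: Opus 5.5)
Your proposal is correct and follows the paper's own proof: substitute the product form into \eqref{eqn:(h)slrmp-factorised-cond}, where you make explicit the common value $1/(\psi(0)\vp(1))$ that the paper leaves implicit. Then invoke \cref{thm:(h)slrmp-factorised} and rescale the one-point function from \eqref{eqn:hslrmp-factorised-weight} via \cref{prop:weight-func-rescale} to remove the factor $(\vp(1)/\psi(0))^n$; your reading of $\vp$ as positive only on $\N$ correctly resolves the inconsistency in the stated hypothesis.
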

\begin{proof}
    If $u$ is given by \eqref{eqn:(h)slrmp-special-u}, then $u$ satisfies \eqref{eqn:(h)slrmp-factorised-cond}, and by \eqref{eqn:hslrmp-factorised-weight}, a one-point function corresponding to $u$ is
    \begin{equation*}
        \tf(x) = \prod_{i=1}^n \dfrac{\vp(1)\psi(i-1)}{\vp(i)\psi(0)} = \left(\dfrac{\vp(1)}{\psi(0)}\right)^n \prod_{i=1}^n \dfrac{\psi(i-1)}{\vp(i)}.
    \end{equation*}
    Then, by \cref{prop:weight-func-rescale}, $f$ in \eqref{eqn:(h)slrmp-special-u-weight} is a one-point function corresponding to $u$.
\end{proof}

\begin{proof}[Proof of \cref{thm:system-arb-weight}(3)]
    We show the result for the homogeneous SLRMP.
    Similar arguments will prove the result for the SLRMP.
    
    We first consider the case when $g(0) = 1$.
    Define $\phi: \N_0 \to \R_{> 0}$, such that $\phi(n) = g(n+1)/g(n)$, for all $n \in \N_0$, and consider $\vp: \N_0 \to \R_{> 0}$, to be a non-constant function such that $\vp(0) = 0$.
    Consider the rate function $u$, such that for all $m,n \in \N_0$,
    \begin{equation*}
        u(m,n) = \vp(m)\phi(n)\vp(n+1).
    \end{equation*}
    Then, $u(m,0) = \vp(m)\phi(0)\phi(1)$, for $m \geq 1$, is not a constant function, and so, the stationary distribution corresponding to $u$ in the homogeneous PALRMP does not factorise.
    But, by \cref{cor:(h)slrmp-special-u}, the stationary distribution corresponding to $u$ in the homogeneous SLRMP factorises.
    By \eqref{eqn:(h)slrmp-special-u-weight}, a one-point function corresponding to $u$ is
    \begin{equation*}
        f(n) = \prod_{i=1}^n \dfrac{\phi(i-1)\vp(i)}{\vp(i)} = g(n).
    \end{equation*}

    When $g(0) \neq 1$, we proceed as in the proof of \cref{thm:system-arb-weight}, utilising \cref{prop:weight-func-rescale}.
    This completes the proof.
\end{proof}

\section{The discrete Hammersley-Aldous-Diaconis process}
\label{sec:HAD}

The Hammersley-Aldous-Diaconis (HAD) process \cite{hammersley-1972,aldous-diaconis-1995,ferrari-martin-2009} is defined on $\R$ with the configurations being locally finite subsets of $\R$.
The dynamics are governed by a Poisson process of rate $1$ on $\R \times (0,\infty)$.
If the configuration were $\eta$ at time $t^-$ and $(x,t)$ is an element of the Poisson process, then the particle closest to the left of $x$ moves to $x$.
The stationary measure of this process is the Poisson process with positive density.

The HAD process can be similarly described on the circle $S^1$, where the configurations are finite subsets of $S^1$. The stationary distribution in this case, if there were $n$ particles in the system, is the $n$-fold product measure of the uniform distribution on $S^1$.

Identifying $S^1$ with $(0,1]$, for $x_1,x_2,\dots,x_L \in \R_{> 0}$, consider $L$ intervals, where the $\ell$'th interval, denoted $I_\ell$, is,
\[
I_\ell = \left(\dfrac{x_1+\cdots+x_{\ell-1}}{x_1+\cdots+x_L}, 
\dfrac{x_1+\cdots+x_\ell}{x_1+\cdots+x_L}\right].
\]
In the invariant measure of the HAD process on $S^1$ with $N$ particles, each of the particles is in $I_\ell$ with probability ${x_\ell}/{(x_1+\cdots+x_L)}$.
An additional particle comes to the interval $I_\ell$ at time $t$ if there is a point $(r,t)$ in the Poisson process, such that $r$ is between the left endpoint of $I_\ell$ and the leftmost particle in $I_\ell$ at time $t^-$.
If there are $n$ particles in $I_\ell$ at time $t^-$, it can be shown that the expected length of this interval is $x_\ell/((n+1) (x_1+\cdots+x_L))$.
So, an additional particle will arrive with this expected rate.
If the original particle was initially in interval $I_k$, then all the intervals between $I_k$ and $I_\ell$ contain no particles.
As the factor $ 1/({x_1+\cdots+x_L})$ is present for all rates, we can consider the rate at which particles come to $I_\ell$ to be $x_\ell/(n+1)$, if there were $n$ particles in $I_\ell$ at time $t^-$.
The intervals $I_\ell$ correspond to the sites in the TALRMP, and $\phi(n) = 1/(n+1)$ defines the rate function, by \eqref{eqn:palrmp-factorised-cond}.

Henceforth, we consider the case when $\phi(n) = 1/(n+1)$, that is to say, the TALRMP on $\Omega_{L,N}$ with rate
\begin{equation}
    \label{eqn:sdHAD-rate}
    u(m,n) = \begin{cases}
    	\dfrac{1}{n+1},\qquad &\text{if } m \geq 1,\\
    	0, \qquad &\text{if }m=0.
    \end{cases}
\end{equation}
We call this the \emph{discrete HAD process}.
By \cref{prop:weight-func-rescale} and \cref{thm:palrmp-factorised}, $\rho : \Omega_{L,N} \to \R_{> 0}$ given by
\[
    \rho(\eta) = N! \prod_{\ell=1}^L \dfrac{x_\ell^{\eta_\ell}}{\eta_\ell!} =  \binom{N}{\eta_1,\eta_2,\dots,\eta_L}\prod_{\ell=1}^L x_\ell^{\eta_\ell}
\]
is stationary, although not normalized. 
By the multinomial theorem,
\begin{equation}
    \label{eqn:HAD-partition-function}
    \sum_{\eta \in \Omega_{L,N}} \rho(\eta)
    = \sum_{\eta_1+\eta_2+\cdots+\eta_L = N} \binom{N}{\eta_1,\eta_2,\dots,\eta_L}\prod_{\ell=1}^L x_\ell^{\eta_\ell}
    = (x_1+\cdots+x_L)^N.
\end{equation}
So, the stationary probability for $\eta \in \Omega_{L,N}$ is given by
\begin{equation}
    \label{eqn:HAD-stat-prob}
    \pi(\eta) = \dfrac{\rho(\eta)}{(x_1+\cdots+x_L)^N} = \dfrac{N!}{(x_1+\cdots+x_L)^N}\prod_{\ell=1}^L\dfrac{x_\ell}{\eta_\ell!}.
\end{equation}

\begin{thm}
    In the discrete HAD process on $\Omega_{L,N}$, the stationary distribution of particles at a fixed site $\ell$ is the binomial distribution with parameters $N$ and $x_{\ell}/(x_1+\cdots+x_L)$.
\end{thm}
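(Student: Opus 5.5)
The plan is to compute the marginal law of $\eta_\ell$ directly from the explicit stationary distribution \eqref{eqn:HAD-stat-prob}, reducing the sum over the other sites to the partition function \eqref{eqn:HAD-partition-function} of a smaller system. (Note that the product in \eqref{eqn:HAD-stat-prob} should read $x_\ell^{\eta_\ell}/\eta_\ell!$, as in the definition of $\rho$; we work with $\rho$ itself to avoid any ambiguity.) Write $X = x_1+\cdots+x_L$ and $X' = X - x_\ell$. By \cref{prop:trans-inv} we could relabel so that $\ell = L$, but this is unnecessary since the formula is symmetric in the sites.

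Fix $0 \le k \le N$. We have $\PP(\eta_\ell = k) = X^{-N}\sum \rho(\eta)$, the sum running over $\eta \in \Omega_{L,N}$ with $\eta_\ell = k$. For such $\eta$, factor the multinomial coefficient:
\[
\binom{N}{\eta_1,\dots,\eta_L} = \binom{N}{k}\binom{N-k}{(\eta_j)_{j\neq \ell}}.
\]
Also pull out $x_\ell^k$. The remaining sum, over $(\eta_j)_{j\ne\ell}$ with total $N-k$, is exactly the sum \eqref{eqn:HAD-partition-function} for $L-1$ sites and $N-k$ particles. By the multinomial theorem it equals $X'^{\,N-k}$.

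Hence $\PP(\eta_\ell = k) = \binom{N}{k} x_\ell^k X'^{\,N-k}/X^N = \binom{N}{k} p^k(1-p)^{N-k}$ with $p = x_\ell/X$, which is the claimed binomial law.

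The only care needed is the edge case $L=1$. There the claim is trivial: $p=1$ and $\eta_1 = N$ almost surely. It is also consistent with the convention that an empty sum over zero other sites equals $\delta_{N-k,0}$. There is no real obstacle beyond the bookkeeping of the multinomial factorisation.
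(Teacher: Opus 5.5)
Your proposal is correct and follows essentially the same route as the paper: you factor the multinomial coefficient as $\binom{N}{k}\binom{N-k}{(\eta_j)_{j\neq\ell}}$ and evaluate the remaining sum via \eqref{eqn:HAD-partition-function} on $L-1$ sites with $N-k$ particles. The only difference is that the paper first relabels $\ell$ as site $1$ using \cref{prop:trans-inv}, which, as you note, is unnecessary; your reading of \eqref{eqn:HAD-stat-prob} with $x_\ell^{\eta_\ell}$ and your range $0\le k\le N$ are the intended ones.
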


\begin{proof}
    By \cref{prop:trans-inv}, we can consider site $\ell$ to be site $1$.
    Consider $0\leq m \leq L$.
    Then,
    \begin{align*}
        \PP(m&\text{ particles at site 1})
        = \sum_{\substack{\eta\in \Omega_{L,N} \\\eta_1 = m}} \pi(\eta)\\
        &= \sum_{\substack{\eta\in \Omega_{L,N} \\\eta_1 = m}} \left(\dfrac{1}{(x_1+\cdots+x_L)^N}\binom{N}{m,\eta_2,\dots,\eta_L}x_1^m\prod_{\ell=2}^N x_\ell^{\eta_\ell}\right)\\
        &= \dfrac{1}{(x_1+\cdots+x_L)^N}x_1^m\binom{N}{m}\sum_{\substack{\eta\in \Omega_{L,N} \\\eta_1 = m}} \left(\binom{N-m}{\eta_2,\dots,\eta_L}\prod_{\ell=2}^N x_\ell^{\eta_\ell}\right)
    \end{align*}
    Now, each $\eta \in \Omega_{L,N}$ where $\eta_1 = m$ corresponds in a bijective way to an $\teta \in \Omega_{L-1,N-m}$, obtained by disregarding the first site of $\eta$.
    Then, applying \eqref{eqn:HAD-partition-function},
    \begin{equation*}
        \begin{split}
            \sum_{\substack{\eta\in \Omega_{L,N} \\\eta_1 = m}} \left(\binom{N-m}{\eta_2,\dots,\eta_L}\prod_{\ell=2}^N x_\ell^{\eta_\ell}\right)
            &= \sum_{\teta\in \Omega_{L-1,N-m}} \left(\binom{N-m}{\eta_2,\dots,\eta_L}\prod_{\ell=2}^N x_\ell^{\eta_\ell}\right)\\
            &= \sum_{\teta \in \Omega_{L-1,N-m}} \rho(\teta)\\
            &= (x_2+\cdots+x_L)^{N-m}.
        \end{split}
    \end{equation*}
    Thus,
    \begin{equation*}
        \PP(m\text{ particles at site 1}) = \binom{N}{m}\left(\dfrac{x_1}{x_1+\cdots+x_L}\right)^m\left(1-\dfrac{x_1}{x_1+\cdots+x_L}\right)^{N-m},
    \end{equation*}
    completing the proof.
\end{proof}

Consider $\eta \in \Omega_{L,N}$ and sites $k$ and $\ell$, such that there is a transition out of $\eta$ where a particle moves from site $k$ to site $\ell$.
We recall that $\eta^\ell \in \Omega_{L,N+1}$ \eqref{eqn:eta-l} is the configuration which has an additional particle at site $\ell$ of $\eta$.
Then, from \eqref{eqn:HAD-stat-prob}, the stationary probability current due to a transition from $\eta$ to $\eta^{k,\ell}$ is
\begin{equation}
    \label{eqn:HAD-prob-current}
    \pi(\eta) \dfrac{ \, x_\ell}{\eta_\ell+1} = \dfrac{(x_1+\cdots+x_L)}{N+1}\pi(\eta^\ell).
\end{equation}

\begin{prop}
    \label{prop:talrmp-edge-element-correspond}
    Let $L,N \in \N$.
    \begin{enumerate}[leftmargin = *]
        \item[(1)]
        Transitions in the discrete HAD process on $\Omega_{L,N}$ where a particle moves across the edge between site $L$ and site $1$ are in bijection with the configurations of $\Omega_{L,N+1}$ which have at least two sites containing particles.

        \item[(2)]
        Consider a transition in the discrete HAD process where a particle moves across the edge between site $L$ and site $1$,
        and suppose $\xi \in \Omega_{L,N+1}$ is the corresponding configuration from part (1) which has at least two sites containing particles.
        Then the probability current due to this transition is $\pi(\xi) (x_1+\cdots+x_L)/(N+1)$.
    \end{enumerate}
\end{prop}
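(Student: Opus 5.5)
The plan is to build the bijection explicitly. The map goes from a transition across the edge $L\to 1$ to its "pre-move configuration with the particle placed at its destination", i.e.\ $\eta^\ell$. Part (2) then follows immediately from \eqref{eqn:HAD-prob-current}.

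Since the discrete HAD process is a TALRMP, particles only move right. A transition across the edge between site $L$ and site $1$ is therefore a pair $(\eta,(k,\ell))$ with $\eta\in\Omega_{L,N}$, $\eta_k\ge 1$, and $\ell$ reached from $k$ by moving right through sites $k+1,\dots,L,1,\dots$. Crossing the edge means $\ell\le k$ in the cyclic order starting from site $1$, i.e.\ $1\le \ell \le k$. Also all sites strictly between $k$ and $\ell$ (namely $k+1,\dots,L$ and $1,\dots,\ell-1$) are empty in $\eta$. The case $\ell=k$ (a particle going once around the ring) needs care. By the definition of $\eta^{k,\ell}$ such a move is trivial, so I will exclude $\ell=k$ and take $1\le\ell<k$ (with $L\ge 2$).

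To such a transition I associate $\xi=\eta^\ell\in\Omega_{L,N+1}$. Then $\xi_\ell=\eta_\ell+1\ge 1$ and $\xi_k=\eta_k\ge1$ with $\ell\ne k$, so $\xi$ has at least two occupied sites. Moreover $\ell$ is the first occupied site of $\xi$ scanning from site $1$, and $k$ is the last occupied site. This holds because sites $1,\dots,\ell-1$ and $k+1,\dots,L$ are empty in $\eta$, hence in $\xi$.

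For the inverse, take $\xi\in\Omega_{L,N+1}$ with at least two occupied sites. Let $\ell=\min S_\xi$ and $k=\max S_\xi$, so $\ell<k$, and set $\eta=\xi$ with one particle removed from $\ell$. Then $\eta_k=\xi_k\ge1$, and sites $k+1,\dots,L,1,\dots,\ell-1$ are empty in $\eta$. So the move $k\to\ell$ is a valid transition across the edge, and the two maps are mutually inverse. This checking of invertibility, especially the role of the "at least two occupied sites" condition (it is what rules out $\ell=k$), is the only delicate point.

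For part (2), the rate of the transition is $x_\ell u(\eta_k,\eta_\ell)=x_\ell/(\eta_\ell+1)$. By \eqref{eqn:HAD-prob-current} its current is $\pi(\eta)x_\ell/(\eta_\ell+1)=\pi(\eta^\ell)(x_1+\cdots+x_L)/(N+1)$. This equals $\pi(\xi)(x_1+\cdots+x_L)/(N+1)$, with $\pi$ on $\Omega_{L,N+1}$ as given by \eqref{eqn:HAD-stat-prob}.
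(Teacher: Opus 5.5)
Your proof is correct and follows the paper's argument. It uses the same map $(\eta, k\to\ell)\mapsto\eta^\ell$, recovers $\ell$ and $k$ as the leftmost and rightmost occupied sites of $\xi$, and reads part (2) off from \eqref{eqn:HAD-prob-current}; the only differences are cosmetic, in that you exhibit the inverse map directly and explicitly dismiss the degenerate case $\ell=k$, whereas the paper checks injectivity and surjectivity separately.
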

\begin{proof}
    Consider a transition where the particle moves across the edge between site $L$ and site $1$ in a configuration $\eta$, say.
    Suppose the particle moves from site $k$ to site $\ell$.
    We note that $1\leq \ell < k \leq L$.
    To this transition we associate $\eta^\ell \in \Omega_{L,N+1}$.
    As a particle moves from site $k$ of $\eta$, sites $k$ and $\ell$ of $\eta^\ell$ contain particles.
    So, there are at least two sites of $\eta^\ell$ containing particles.
    In $\eta^\ell$, site $\ell$ is the leftmost site containing particles and site $k$ is the rightmost site containing particles.

    We claim that $\eta^\ell$ is the desired configuration. To show that this is a bijection,
    suppose there is another transition, say out of $\omega \in \Omega_{L,N}$, where a particle moves from site $a$ to site $b$ with $\eta^\ell = \omega^b$.
    If $\ell < b$, then $\omega_\ell = \omega^b_\ell > 0$, in which case, there could not be a transition where a particle moves across the edge between site $1$ and site $L$ and has arrival site as site $b$.
    Similarly, $\ell > b$ cannot occur.
    So, $\ell = b$.
    Similarly, we can show that $k = a$ and so $\eta = \omega$.
    So, this correspondence is injective.

    Consider $\xi \in \Omega_{L,N+1}$, which has at least two sites containing particles, and say that site $\ell$ (resp. site $k$) is the leftmost (resp. rightmost) site which contains a particle.
    Then, there exists $\zeta \in \Omega_{L,N}$, such that $\zeta^\ell = \xi$ and a transition of $\zeta$ where a particle moves to site $\ell$.
    Then, $\xi = \zeta^\ell$ is the configuration corresponding to this transition.
    So, the correspondence is surjective.
    This proves part (1), and part (2) follows from \eqref{eqn:HAD-prob-current}.
\end{proof}

\begin{thm}
	\label{thm:HAD-particle-current}
	On $\Omega_{L,N}$, the current due to particle across an edge is $((x_1+\cdots+x_L)/(N+1))$ times the probability that there are at least two sites containing particles in $\Omega_{L,N+1}$.
\end{thm}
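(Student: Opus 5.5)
By \cref{prop:trans-inv} it suffices to treat the edge between site $L$ and site $1$, since a cyclic relabelling moves any edge there. The relabelling permutes the $x_\ell$'s cyclically, which leaves both $x_1+\cdots+x_L$ and the event ``at least two occupied sites'' unchanged. The stationary particle current across this edge is the total stationary probability current of all transitions in which a particle crosses it. In the discrete HAD process particles only move right, so there is no opposite flow to subtract. Thus the current equals
\[
\sum_{\eta \in \Omega_{L,N}} \ \sum_{\substack{\eta \to \eta^{k,\ell} \\ \text{crossing the edge}}} \pi(\eta)\,\text{rate}(\eta\to\eta^{k,\ell}).
\]

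Next I would apply \cref{prop:talrmp-edge-element-correspond}. Part (1) reindexes this double sum bijectively by the configurations $\xi \in \Omega_{L,N+1}$ with at least two occupied sites. Part (2), which rests on \eqref{eqn:HAD-prob-current}, says each summand equals $\pi_{L,N+1}(\xi)\,(x_1+\cdots+x_L)/(N+1)$. Pulling out the common constant gives
\[
\frac{x_1+\cdots+x_L}{N+1}\sum_{\substack{\xi\in\Omega_{L,N+1}\\ |S_\xi|\ge 2}} \pi_{L,N+1}(\xi),
\]
and the sum is exactly the stationary probability in $\Omega_{L,N+1}$ that at least two sites are occupied, which proves the claim.

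The main care point is the bijection, but that is already supplied by \cref{prop:talrmp-edge-element-correspond}. The remaining check is that every crossing transition, including a long-range jump from $k$ to $\ell$ with $\ell<k$, is counted once and only once. This follows because the arrival site $\ell$ and departure site $k$ are determined from $\xi=\eta^\ell$ as its leftmost and rightmost occupied sites. As an optional closed form, one can use the binomial marginals: the complement event, all $N+1$ particles at a single site, has probability $\sum_\ell (x_\ell/X)^{N+1}$ with $X = x_1+\cdots+x_L$. The current is therefore $\frac{X}{N+1}\bigl(1-\sum_\ell (x_\ell/X)^{N+1}\bigr)$.
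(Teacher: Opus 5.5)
Your proposal is correct and follows the paper's proof: you sum the stationary currents over edge-crossing transitions, reindex them by the bijection of \cref{prop:talrmp-edge-element-correspond}(1), apply the per-transition identity from part (2), and get the closed form from the configurations with all particles at one site. Your explicit use of \cref{prop:trans-inv} to reduce an arbitrary edge to the one between sites $L$ and $1$ is a small step that the paper leaves implicit.
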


\begin{proof}
    We consider the edge between site $L$ and site $1$.
    For a transition out of $\eta$, where a particle moves across the edge between site $L$ and site $1$, the arrival site $k$ and departure site $\ell$ are such that $1\leq \ell < k \leq L$, and this contributes a current of $\pi(\eta) \, \text{rate}(\eta \to \eta^{k,\ell})$ across the edge between site $L$ and site $1$.
    By \cref{prop:talrmp-edge-element-correspond}(2), each such transition contributes a current of $((x_1+\cdots+x_L)/(N+1))\pi(\eta^\ell)$.
    Also, by \cref{prop:talrmp-edge-element-correspond}(1), each element of $\Omega_{L,N+1}$, where at least two sites contain particles, correspond to such a transition.
    Then, summing over all such transitions, and applying \cref{prop:talrmp-edge-element-correspond}, the current across the edge between site $L$ and site $1$ is $((x_1+\cdots+x_L)/(N+1))$ times the probability that there are at least two sites containing particles in $\Omega_{L,N+1}$
    We note that if a configuration $\xi \in \Omega_{L,N+1}$ is such that all the particles are at a single site, say site $\ell$, then $\rho(\xi) = x_\ell^{N+1}$.
    Thus, the current due to particles across any edge in this system is
    \begin{equation*}
    	\dfrac{(x_1+\cdots+x_L)}{(N+1)}\dfrac{(x_1+\cdots+x_L)^{N+1} - \sum_{\ell=1}^L x_\ell^{N+1} }{(x_1+\cdots+x_L)^{N+1}}.
    \end{equation*}
\end{proof}

\begin{rem}
From \cref{thm:HAD-particle-current}, one can show that the current in the system decreases as the number of particles increases.    
\end{rem}

\begin{rem}
    Consider the TALRMP with $u$ is determined by $\phi$, as in \eqref{eqn:palrmp-factorised-cond}, and $f$ given by \eqref{eqn:palrmp-factorised-weight}.
    The current due to particles in this system would be $(Z^f_{L,N+1}/Z^f_{L,N})$ times the probability that there are at least two sites containing particles in $\Omega_{L,N+1}$, using essentially the same argument
    as in \cref{thm:HAD-particle-current}.
\end{rem}

\section*{Acknowledgements}

We are grateful to James Martin for stimulating discussions.
The authors acknowledge support from the DST FIST program~-~2021 [TPN - 700661]. 

\bibliographystyle{alpha}
\bibliography{lrmp_ref}

\begin{thebibliography}{Ham72}

\bibitem[AD95]{aldous-diaconis-1995}
David Aldous and Persi Diaconis.
\newblock Hammersley's interacting particle process and longest increasing
  subsequences.
\newblock {\em Probability theory and related fields}, 103:199--213, 1995.

\bibitem[CT85]{cocozza-1985}
Christiane Cocozza-Thivent.
\newblock Processus des misanthropes.
\newblock {\em Zeitschrift f{\"u}r Wahrscheinlichkeitstheorie und verwandte
  Gebiete}, 70:509--523, 1985.

\bibitem[EH05]{evans-hanney-2005}
Martin~R Evans and Tom Hanney.
\newblock Nonequilibrium statistical mechanics of the zero-range process and
  related models.
\newblock {\em Journal of Physics A: Mathematical and General}, 38(19):R195,
  2005.

\bibitem[EW14]{evans-waclaw-2014}
Martin~R Evans and Bartek Waclaw.
\newblock Condensation in stochastic mass transport models: beyond the
  zero-range process.
\newblock {\em Journal of Physics A: Mathematical and Theoretical},
  47(9):095001, 2014.

\bibitem[FM09]{ferrari-martin-2009}
Pablo~A Ferrari and James~B Martin.
\newblock Multiclass {H}ammersley-{A}ldous-{D}iaconis process and
  multiclass-customer queues.
\newblock In {\em Annales de l'IHP Probabilit{\'e}s et statistiques},
  volume~45, pages 250--265, 2009.

\bibitem[Ham72]{hammersley-1972}
John~M Hammersley.
\newblock A few seedlings of research.
\newblock In {\em Proc. Sixth Berkeley Symp. Math. Statist. and Probability},
  volume~1, pages 345--394, 1972.

\bibitem[Lig05]{liggett-ips-2005}
Thomas~M. Liggett.
\newblock {\em Interacting particle systems}.
\newblock Classics in Mathematics. Springer-Verlag, Berlin, 2005.
\newblock Reprint of the 1985 original.

\bibitem[Spi70]{spitzer-1970}
Frank Spitzer.
\newblock Interaction of {M}arkov processes.
\newblock {\em Advances in Math.}, 5:246--290 (1970), 1970.

\bibitem[SRB96]{schutz-ramaswamy-barma-1996}
Gunter~M Sch{\"u}tz, Ramakrishna Ramaswamy, and Mustansir Barma.
\newblock Pairwise balance and invariant measures for generalized exclusion
  processes.
\newblock {\em Journal of Physics A: Mathematical and General}, 29(4):837,
  1996.

\end{thebibliography}
	
\end{document}